\documentclass[reqno,12pt]{amsart}
\usepackage{geometry}
\geometry{left=27mm,right=27mm,top=30mm,bottom=30mm}
\usepackage{amsmath,amssymb,mathrsfs,color}
\usepackage[colorlinks,
linkcolor=red,
anchorcolor=green,
citecolor=blue,
]{hyperref}
\usepackage[upint]{stix}
\usepackage{subfigure}
\usepackage{float}
\usepackage{times}
\usepackage{tikz}
\usetikzlibrary{intersections}
\usepackage{paralist}

\usepackage{comment}


\makeatletter
\def\setliststart#1{\setcounter{\@listctr}{#1}%
  \addtocounter{\@listctr}{-1}}
\makeatother

\makeatletter
\@addtoreset{figure}{section}
\makeatother

\setcounter{tocdepth}{4}
\setcounter{secnumdepth}{4}

\usepackage{calc}
 \newtheorem{The}{Theorem}[section]
 \newtheorem{Cor}[The]{Corollary}
 \newtheorem{Lem}[The]{Lemma}
 \newtheorem{Pro}[The]{Proposition}
 \theoremstyle{definition}
 \newtheorem{defn}[The]{Definition}
 \newtheorem{Rem}[The]{Remark}
 
 \numberwithin{equation}{section}

\newcommand{\T}{\mathbb{T}}
\newcommand{\R}{\mathbb{R}}

\newcommand{\N}{\mathbb{N}}

\newcommand{\SING}{\mbox{\rm Sing}\,(u)}
\newcommand{\CUT}{\mbox{\rm Cut}\,(u)}

\title{Singularities of solutions of Hamilton-Jacobi equations}
\author{Piermarco Cannarsa \and Wei Cheng}
\address{Dipartimento di Matematica, Universit\`a di Roma ``Tor Vergata'', Via della Ricerca Scientifica 1, 00133 Roma, Italy}
\email{cannarsa@mat.uniroma2.it}
\address{Department of Mathematics, Nanjing University, Nanjing 210093, China}
\email{chengwei@nju.edu.cn}
\date{\today}
\subjclass[2010]{35F21, 49L25, 37J50}
\keywords{Hamilton-Jacobi equation, viscosity solution, propagation of singularities, singular characteristics}
\begin{document}
\maketitle

\begin{abstract}
This is a survey paper on the quantitative analysis of the propagation of singularities for the viscosity solutions to Hamilton-Jacobi equations in the past decades. We also review further applications of the theory to various fields such as  Riemannian geometry, Hamiltonian dynamical systems and partial differential equations.
\end{abstract}

\section{Introduction}

This is a survey paper concerning the progress made for the singularities of the solutions to Hamilton-Jacobi equations in the past decades. We begin with a quote from the paper \cite{Khanin_Sobolevski2016} by Khanin and Sobolevski: 
\medskip
\begin{center}
\begin{minipage}[c]{0.9\linewidth}
	\textit{
	The evolutionary Hamilton-Jacobi equation
	\begin{equation}\label{eq:intro_KS}\tag{HJ}
	    \frac{\partial u}{\partial t}+H(t,x,\nabla u)=0
	\end{equation}
	appears in diverse mathematical models ranging from analytical mechanics to combinatorics, condensed matter, turbulence, and cosmology $\cdots$.  In many of these applications the objects of interest are described by singularities of solutions, which inevitably appear for generic initial data after a finite time due to the nonlinearity of \eqref{eq:intro_KS}. Therefore one of the central issues both for theory and applications is to understand the behavior of the system after singularities form.
	}
\end{minipage}
\end{center}
\medskip

The notion of viscosity solutions, introduced in the seminal papers \cite{Crandall_Lions1983,Crandall_Evans_Lions1984}, provides the right class of generalized solutions to study existence, uniqueness, and stability issues for problem \eqref{eq:intro_KS}. An overview of the main features of this theory can be found in the monographs \cite{Bardi_Capuzzo-Dolcetta1997} for first order equations and \cite{Fleming_Soner_book2006} for second order equations. 

It is well known that Hamilton-Jacobi equations have no global smooth solutions in general, because solutions may develop singularities due to crossing or focusing of characteristics. The persistence of singularities, i.e, once a singularity is created, it will propagate forward in time up to $+\infty$, affords an evidence of irreversibility for equation \eqref{eq:intro_KS}, while the compactness after the evolution of the associated Lax-Oleinik semi-group gives another one (\cite{Ancona_Cannarsa_Nguyen2016_1,Ancona_Cannarsa_Nguyen2016_2}).

The expected maximal regularity for solutions of \eqref{eq:intro_KS} is the local semiconcavity of $u(t,\cdot)$ for $t>0$. Indeed, semiconcave functions were used to study well-posedness for \eqref{eq:intro_KS} before the theory of viscosity solution was developed (\cite{Douglis1961,Kruzkov1975,Krylov_book1987}). Nowadays, the notion of semiconcavity has been widely used in many mathematical fields, such as \cite{Hrustalev1978,Cannarsa_Frankowska1991,Fleming_McEneaney2000,Rifford2000,Rifford2002} in control theory and sensitivity analysis, \cite{Rockafellar1982,Colombo_Marigonda2006} in nonsmooth and variational analysis, \cite{Petrunin2007} in metric geometry. Good references on semiconcave functions include the monographs \cite{Cannarsa_Sinestrari_book,Villani_book2009}. 

To our knowledge, the first paper dealing with the singularities of viscosity solutions of \eqref{eq:intro_KS} is the paper by the first author and Soner (\cite{Cannarsa_Soner1987}). Thanks to the discovery of semiconcave functions in the study of viscosity solutions of \eqref{eq:intro_KS} (\cite{Cannarsa_Soner1989}), some propagation results for general semiconcave functions were obtained in \cite{Ambrosio_Cannarsa_Soner1993}. The propagation of singularities of semiconcave functions along Lipschitz arcs was firstly studied in \cite{Albano_Cannarsa1999} and then extend to solutions of Hamilton-Jacobi equations (\cite{Albano_Cannarsa2002}). 

It is the first time in \cite{Albano_Cannarsa2002} the authors introduced the important notion of \emph{generalized characteristics} for Hamilton-Jacobi equation \eqref{eq:intro_KS}, which is a keystone for the further progress later. In one-dimensional case, the idea of generalized characteristics also comes from earlier work by Dafermos \cite{Dafermos1977} on Burgers equation. The readers can also refer to Arnold's book \cite{Arnold_book1990} on the \emph{shock wave singularities and perestroikas of Maxwell sets} and the references therein.

A Lipschitz curve $\mathbf{x}:[0,T]\to\Omega$, $\mathbf{x}(0)=x_0\in\SING$, is called a \textit{generalized characteristic} with respect to $(H,u)$ from $x_0$ if the following differential inclusion is satisfied
\begin{equation}\label{eq:intro_gc}
	\dot{\mathbf{x}}(t)\in\text{co}\,H_p(t,\mathbf{x}(t),D^+u(\mathbf{x}(t))),\quad a.e.,\ t\in[0,T].
\end{equation}
It was proved in \cite{Albano_Cannarsa2002} that there exists a generalized characteristic from any initial singular point $x_0$ propagating the singularities if $0\not\in\text{co}\,H_p(x_0,D^+u(x_0))$. Using the approximation method introduced by Yu (\cite{Yu2006}), the first author and Yu showed the existence of \emph{singular characteristics} (see Definition \ref{defn:ssc}) which has more regularity information. More precisely, any such a singular characteristic $\mathbf{x}$ satisfies the condition $\lim_{t\to0^+}\operatorname*{ess\ sup}_{s\in[0,t]}|\dot{\mathbf{x}}(s)-\dot{\mathbf{x}}^+(0)|=0$. 

From late 1990's, Fathi established weak KAM theory mainly for the stationary Hamilton-Jacobi equation (\cite{Fathi1997_1,Fathi1997_2,Fathi1998_1,Fathi1998_2,Fathi_book,Fathi2020})
\begin{equation}\label{eq:intro_HJs}
	H(x,Du(x))=0,\quad x\in M,
\end{equation}
where $M$ is a smooth manifold, $H$ is a Tonelli Hamiltonian and $0$ is the Ma\~n\'e's critical value. Weak KAM theory bridges Mather theory (\cite{Mather1991,Mather1993,Mane1992}) from Hamiltonian dynamical systems to the theory of viscosity solutions of \eqref{eq:intro_HJs}. Any weak KAM solution $u$ of \eqref{eq:intro_HJs} is exact the common fixed point of the associated negative type Lax-Oleinik semi-group $\{T_t\}_{t>0}$, $T_tu=u$ for all $t>0$.

For any (Lipschitz and semiconcave) weak KAM solution $u$ of \eqref{eq:intro_HJs}, an intrinsic method was developed in the paper \cite{Cannarsa_Cheng3}. Using the positive Lax-Oleinik semi-group $\{\breve{T}_t\}_{t>0}$, one can obtain an \emph{intrinsic singular characteristic} propagating singularities from any singular initial point, or general cut point of $u$ (\cite{Cannarsa_Cheng_Fathi2017}).

Although singular characteristics satisfy \eqref{eq:intro_gc}, the convex hull in the differential inclusion \eqref{eq:intro_gc} is an obvious obstacle to establish uniqueness and stability. The only well-understood system with such well-posedness properties is the system with Hamiltonians quadratic in the momentum variable. A typical example is the Hamiltonian $H=\frac 12|p|^2$, where differential inclusion \eqref{eq:intro_gc} becomes the \emph{generalized gradient} system
\begin{align*}
	\dot{\mathbf{x}}(t)\in D^+u(\mathbf{x}(t)),\quad t\in[0,T].
\end{align*}

Inspired by earlier works \cite{Bogaevsky2002,Bogaevski2006,Stromberg2013}, Khanin and Sobolevski essentially proved the existence of singular characteristics satisfying \eqref{eq:intro_gc} without convex hull under some extra conditions on the initial data (\cite{Khanin_Sobolevski2016}). These kinds of singular characteristics are called \emph{strict singular characteristics} (or \emph{broken characteristics} in \cite{Stromberg2013}). The fact that singular characteristics satisfy more restrictive dynamics than \eqref{eq:intro_gc} might help to obtain some kind of uniqueness result. Indeed, in the recent work \cite{Cannarsa_Cheng2020}, we solved such a well-posedness problem in $\R^2$ for non-critical initial data.

When we pursue applications of this theory, global propagation results for  solutions of Hamilton-Jacobi equations turn out to be necessary. Global propagation for the closure of the singular set was obtained by Albano in \cite{Albano2016_1}. For the propagation of genuine singularities, a global result for a Cauchy-Dirichlet problem with quadratic Hamiltonian was obtained in \cite{Cannarsa_Mazzola_Sinestrari2015} using an energy method. More global results for weak KAM solutions and Dirichlet problem using intrinsic method can be found in \cite{Cannarsa_Cheng3,Cannarsa_Cheng_Fathi2017,CCMW2019,Cannarsa_Cheng_Fathi2019}.

One important application of the global propagation result is the homotopy equivalence between the complement of Aubry set and the singular set of any weak KAM solution, and the local contractibility of the singular set (\cite{Cannarsa_Cheng_Fathi2017,Cannarsa_Cheng_Fathi2019}). An earlier result for such homotopy equivalence for the distance function on Riemannian manifolds was obtained in \cite{ACNS2013} based on invariance properties of the generalized gradients flow. Moreover, global propagation result in \cite{Cannarsa_Cheng_Fathi2019} can also be applied to some basic problem in Riemmanian geometry such as the analysis of the set of points which can be joined by at least two minimizing geodesics. There are also some applications of this theory to Hamiltonian dynamical systems, mainly in the frame of Mather theory and weak KAM theory (\cite{Cannarsa_Cheng_Zhang2014,Cannarsa_Cheng2,Cannarsa_Chen_Cheng2019,Zhang2020}). Above evidence suggests that the story of singularities will continue and further applications to various topics will appear in the near future.

The paper is organized as follows. In section 2, we introduce some necessary materials on Hamilton-Jacobi equations and semiconcavity. In section 3 and 4, we will review the progress in local and global propagation of singularities for various kinds of problem. In section 5, we will concentrate on the setting of the weak KAM theory, especially the applications to the topological and dynamical applications. There is also a short concluding remark in section 6. We also provide a new proof of the Lipschitz regularity for the intrinsic singular characteristics in the appendix, which looks quite natural and intuitive comparing to the original one in \cite{Cannarsa_Cheng3}.

\medskip

\noindent\textbf{Acknowledgements.} Piermarco Cannarsa was supported in part by the National Group for Mathematical Analysis, Probability and Applications (GNAMPA) of the Italian Istituto Nazionale di Alta Matematica ``Francesco Severi'' and by Excellence Department Project awarded to the Department of Mathematics, University of Rome Tor Vergata, CUP E83C18000100006. Wei Cheng is partly supported by National Natural Science Foundation of China (Grant No. 11871267, 11631006 and 11790272). The second author also thanks to Jiahui Hong for helpful discussion on some part of the appendix.

\section{Preliminaries}


Let $\Omega\subset\R^n$ be a convex open set. We recall that  a function $u:\Omega\to\R$ is {\em semiconcave} (with linear modulus) if there exists a constant $C>0$ such that
\begin{equation}\label{eq:SCC}
\lambda u(x)+(1-\lambda)u(y)-u(\lambda x+(1-\lambda)y)\leqslant\frac C2\lambda(1-\lambda)|x-y|^2
\end{equation}
for any $x,y\in\Omega$ and $\lambda\in[0,1]$. 

For any continuous function $u:\Omega\subset\R^n\to\R$ and any $x\in\Omega$, the closed convex sets
\begin{align*}
D^-u(x)&=\left\{p\in\R^n:\liminf_{y\to x}\frac{u(y)-u(x)-\langle p,y-x\rangle}{|y-x|}\geqslant 0\right\},\\
D^+u(x)&=\left\{p\in\R^n:\limsup_{y\to x}\frac{u(y)-u(x)-\langle p,y-x\rangle}{|y-x|}\leqslant 0\right\}.
\end{align*}
are called the {\em subdifferential} and {\em superdifferential} of $u$ at $x$, respectively.

The following statement characterizes semiconcavity (with linear modulus) for a continuous function by using superdifferentials. 

\begin{Pro}
\label{criterion-Du_semiconcave}
Let $u:\Omega\to\R$ be a continuous function. If there exists a constant $C>0$ such that, for any $x\in\Omega$, there exists $p\in\R^n$ such that
\begin{equation}\label{criterion_for_lin_semiconcave}
u(y)\leqslant u(x)+\langle p,y-x\rangle+\frac C2|y-x|^2,\quad \forall y\in\Omega,
\end{equation}
then $u$ is semiconcave with constant $C$ and $p\in D^+u(x)$.
Conversely,
if $u$ is semiconcave  in $\Omega$ with constant $C$, then \eqref{criterion_for_lin_semiconcave} holds for any $x\in\Omega$ and $p\in D^+u(x)$.
\end{Pro}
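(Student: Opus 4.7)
The proposition has two directions, which I will address in turn.

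For the forward implication (the pointwise quadratic estimate implies semiconcavity), I will fix $x,y\in\Omega$ and $\lambda\in[0,1]$, and set $z=\lambda x+(1-\lambda)y$. The hypothesis applied at $z$ furnishes some $p\in\R^n$ for which \eqref{criterion_for_lin_semiconcave} holds at $z$; I evaluate this inequality at the two points $x$ and $y$, multiply by $\lambda$ and $1-\lambda$ respectively, and add. The linear terms collapse because $\lambda(x-z)+(1-\lambda)(y-z)=0$, while the quadratic contributions simplify via $|x-z|^2=(1-\lambda)^2|x-y|^2$, $|y-z|^2=\lambda^2|x-y|^2$, and the identity $\lambda(1-\lambda)^2+(1-\lambda)\lambda^2=\lambda(1-\lambda)$. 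This yields exactly \eqref{eq:SCC}. To see that the $p$ supplied by the hypothesis lies in $D^+u(x)$, I divide \eqref{criterion_for_lin_semiconcave} by $|y-x|$ and observe that the right-hand side is bounded by $(C/2)|y-x|\to 0$ as $y\to x$, so $\limsup_{y\to x}(u(y)-u(x)-\langle p,y-x\rangle)/|y-x|\leqslant 0$, which is the definition of $p\in D^+u(x)$.

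For the converse, assume $u$ is semiconcave with constant $C$ on $\Omega$, fix $x\in\Omega$ and $p\in D^+u(x)$, and fix an arbitrary $y\in\Omega$. I will introduce the convex combination $y_t:=x+t(y-x)=(1-t)x+ty$ for $t\in(0,1]$; convexity of $\Omega$ guarantees $y_t\in\Omega$. Applying \eqref{eq:SCC} with the triple $(x,y,1-t)$ gives $(1-t)u(x)+tu(y)-u(y_t)\leqslant \tfrac{C}{2}t(1-t)|y-x|^2$, so after rearranging and dividing by $t$ I obtain
$$u(y)-u(x)\leqslant \frac{u(y_t)-u(x)}{t}+\frac{C}{2}(1-t)|y-x|^2.$$
Since $|y_t-x|=t|y-x|$, the definition $p\in D^+u(x)$ immediately implies $\limsup_{t\to 0^+}(u(y_t)-u(x))/t\leqslant\langle p,y-x\rangle$. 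Passing to the $\limsup$ as $t\to 0^+$ in the displayed inequality delivers \eqref{criterion_for_lin_semiconcave}.

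I do not foresee any substantial obstacle; both parts are bookkeeping exercises combining convex combinations with the definitions of semiconcavity and of the superdifferential. The single point that requires mild care is the limit passage in the converse: one must verify that the $\limsup$ built into the definition of $D^+u(x)$ specializes correctly along the radial sequence $y_t\to x$, which is immediate from the homogeneity $|y_t-x|=t|y-x|$.
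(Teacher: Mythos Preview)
Your proof is correct. Both directions are handled cleanly: the forward implication via evaluating the quadratic bound at the midpoint $z=\lambda x+(1-\lambda)y$ and combining convexly, and the converse via the standard radial limit along $y_t=x+t(y-x)$ together with the definition of $D^+u(x)$. The only minor remark is that in the converse you should note separately the trivial case $y=x$ (so that $|y_t-x|>0$ when dividing), but this is cosmetic.

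As for comparison with the paper: this proposition is stated in the preliminaries of a survey and the paper does not supply a proof, deferring implicitly to the monograph \cite{Cannarsa_Sinestrari_book}. Your argument is the standard one found there, so there is nothing further to compare.
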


Let $u:\Omega\to\R$ be locally Lipschitz. We recall that a vector $p\in\R^n$ is called a {\em reachable} (or {\em limiting}) {\em gradient}  of $u$ at $x$ if there exists a sequence $\{x_n\}\subset\Omega\setminus\{x\}$ such that $u$ is differentiable at $x_k$ for each $k\in\N$, and
$$
\lim_{k\to\infty}x_k=x\quad\text{and}\quad \lim_{k\to\infty}Du(x_k)=p.
$$
The set of all reachable gradients of $u$ at $x$ is denoted by $D^{\ast}u(x)$.

\begin{Pro}\label{basic_facts_of_superdifferential}
Let $u:\Omega\subset\R^n\to\R$ be a continuous semiconcave function and let $x\in\Omega$. Then the following properties hold.
\begin{enumerate}[\rm {(}a{)}]
  \item $D^+u(x)$ is a nonempty closed convex set in $\R^n$ and $D^{\ast}u(x)\subset\partial D^+u(x)$, where  $\partial D^+u(x)$ denotes the topological boundary of $D^+u(x)$.
  \item The set-valued function $x\rightrightarrows D^+u(x)$ is upper semicontinuous.
  \item $D^+u(x)$ is a singleton if and only if $D^-u(x)\not=\varnothing$. If $D^+u(x)$ is a singleton, then $u$ is differentiable at $x$. Moreover, if $D^+u(x)$ is a singleton for every point in $\Omega$, then $u\in C^1(\Omega)$.
  \item $D^+u(x)=\mathrm{co}\, D^{\ast}u(x)$.
  \item If $u$ is both semiconcave and semiconvex in $\Omega$, then $u\in C^{1,1}(\Omega)$.
\end{enumerate}
\end{Pro}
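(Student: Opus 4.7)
The plan is to lean on the equivalent characterization of semiconcavity in Proposition \ref{criterion-Du_semiconcave}, namely the quadratic upper bound $u(y) \leq u(x) + \langle p, y-x\rangle + \frac{C}{2}|y-x|^2$ available at every $x$ for every $p \in D^+u(x)$. Parts (a) and (b) follow quickly: nonemptiness of $D^+u(x)$ is the forward half of Proposition \ref{criterion-Du_semiconcave}, while closedness and convexity are read off the definition. Upper semicontinuity in (b) is obtained by fixing $x_k \to x$ and $p_k \in D^+u(x_k)$ with $p_k \to p$, writing the quadratic inequality at each $x_k$, and passing to the limit in $k$. For $D^\ast u(x) \subset \partial D^+u(x)$, any $p = \lim Du(x_k)$ lies in $D^+u(x)$ by (b); it cannot be interior because each $Du(x_k)$ is the unique element of $D^+u(x_k)$, so a small separation argument rules out interior limit points.

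For (c), first suppose $D^-u(x) \neq \emptyset$, pick $q \in D^-u(x)$ and $p \in D^+u(x)$, and combine the liminf defining $D^-$ with the quadratic bound of Proposition \ref{criterion-Du_semiconcave}. Taking $y = x + tv$ and $t \to 0^+$ forces $\langle q-p, v\rangle \leq 0$ for all $v$, hence $p = q$ and $D^+u(x)$ is a singleton. Conversely, if $D^+u(x) = \{p\}$, the semiconcavity inequality already gives $u(y)-u(x)-\langle p, y-x\rangle \leq \frac{C}{2}|y-x|^2$. For the matching lower bound, assume by contradiction that $u(y_k) - u(x) - \langle p, y_k-x\rangle \leq -\varepsilon|y_k-x|$ for some $\varepsilon > 0$ and some sequence $y_k \to x$; pick $p_k \in D^+u(y_k)$ and apply Proposition \ref{criterion-Du_semiconcave} at $y_k$ with test point $x$ to get $u(y_k) - u(x) \geq \langle p_k, y_k - x\rangle - \frac{C}{2}|y_k-x|^2$. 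The upper semicontinuity from (b) forces any subsequential limit of $p_k$ to equal $p$, yielding a contradiction. Thus $u$ is differentiable at $x$ with $Du(x) = p$, and in particular $D^-u(x) \ni p$. Finally, if $D^+u$ is singleton everywhere, continuity of $Du$ is immediate from (b), so $u \in C^1(\Omega)$.

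For (d), the inclusion $\mathrm{co}\, D^\ast u(x) \subset D^+u(x)$ follows from (a) and convexity. For the reverse inclusion, suppose $p \in D^+u(x) \setminus \mathrm{co}\, D^\ast u(x)$ and apply Hahn--Banach to find a direction $v$ with $\langle p, v\rangle > \max\{\langle q, v\rangle : q \in D^\ast u(x)\}$. One then computes the Clarke-type directional derivative $\partial^+u(x,v) := \limsup_{y \to x,\, t \to 0^+} \tfrac{u(y+tv)-u(y)}{t}$ in two ways: approximating by points of differentiability along lines in direction $v$ (which exist by Rademacher's theorem and the Lipschitz continuity of $u$) gives $\partial^+u(x,v) \leq \max_{q \in D^\ast u(x)} \langle q, v\rangle$, whereas applying Proposition \ref{criterion-Du_semiconcave} to sequences approaching $x$ yields $\partial^+u(x,v) \geq \langle p, v\rangle$ since $p \in D^+u(x)$. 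These two estimates contradict the choice of $v$.

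For (e), semiconvexity of $u$ means $-u$ is semiconcave, whence $D^-u(x) \neq \emptyset$ at every $x$, so by (c) $u$ is differentiable everywhere and satisfies the two-sided quadratic bound $|u(y)-u(x)-\langle Du(x), y-x\rangle| \leq \frac{C}{2}|y-x|^2$. To promote this to Lipschitz continuity of $Du$, fix $x, y$ with $r = |x-y|$ and a unit vector $e$, and apply the bound at the three points $x$, $y$, $z := y + re$, first with base point $x$ and then with base point $y$; a short combination yields $|\langle Du(x) - Du(y), e\rangle| \leq C' r$ for some $C'$ depending only on $C$, and choosing $e = (Du(x)-Du(y))/|Du(x)-Du(y)|$ produces the Lipschitz bound on $Du$. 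The main obstacle I expect is (d): the reverse inclusion genuinely requires Rademacher's theorem together with a careful two-sided computation of the directional derivative, whereas the other parts rest on rather direct manipulations of the quadratic inequality from Proposition \ref{criterion-Du_semiconcave}.
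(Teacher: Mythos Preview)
The paper does not prove this proposition: it is stated as background, with the implicit reference being the monograph \cite{Cannarsa_Sinestrari_book}. So there is no ``paper's proof'' to compare against; I will simply assess your sketch on its own terms.

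Your arguments for (b), (c), and (e) are correct and standard. Your argument for (d) is also correct, and is in fact the Clarke-generalized-gradient route rather than the exposed-face argument one usually sees in \cite{Cannarsa_Sinestrari_book}. One point worth making explicit: the lower bound $\partial^+u(x,v)\geq\langle p,v\rangle$ for $p\in D^+u(x)$ is obtained by taking $y=x-tv$ in the $\limsup$, so that $y+tv=x$ and the semiconcavity inequality at $x$ (Proposition~\ref{criterion-Du_semiconcave}) applied to the test point $x-tv$ gives $u(x)-u(x-tv)\geq t\langle p,v\rangle-\tfrac{C}{2}t^2|v|^2$. Your phrase ``applying Proposition~\ref{criterion-Du_semiconcave} to sequences approaching $x$'' hints at this but does not spell it out; a reader might otherwise try $y=x$, which yields the wrong inequality. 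For the upper bound you should also remark that continuity of the difference quotient in $y$ lets you pass from the Fubini-a.e.\ set of $y$'s (where the line integral representation holds) to the full $\limsup$.

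The one genuinely incomplete step is in (a), the inclusion $D^*u(x)\subset\partial D^+u(x)$. Saying ``each $Du(x_k)$ is the unique element of $D^+u(x_k)$, so a small separation argument rules out interior limit points'' is not a proof: nothing about $D^+u(x_k)$ being a singleton, by itself, prevents the limit from landing in the interior of $D^+u(x)$. The standard argument is directional: if $Du(x_k)\to p$ with $x_k\to x$, pass to a subsequence so that $\theta_k:=(x_k-x)/|x_k-x|\to\theta$; applying the semiconcavity inequality at $x$ (with arbitrary $q\in D^+u(x)$) and at $x_k$ (with $Du(x_k)$) and adding gives $\langle Du(x_k)-q,\theta_k\rangle\leq C|x_k-x|$, hence $\langle p,\theta\rangle\leq\langle q,\theta\rangle$ for all $q\in D^+u(x)$. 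Thus $p$ lies in the exposed face of $D^+u(x)$ in direction $\theta$, and in particular on $\partial D^+u(x)$.
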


\begin{defn}
Let $u:\Omega\to\R$ be a semiconcave function. $x\in\Omega$ is called a \emph{singular point} of $u$ if $D^+u(x)$ is not a singleton. The set of all singular points of $u$ is denoted by $\SING$.
\end{defn}

To study the rectifiability of the singular set $\SING$ of a semiconcave function $u$, we need some concepts from geometric measure theory.

\begin{defn}
Let $k\in\{0,1,\cdots,n\}$ and let $C\subset\R^n$. 
\begin{enumerate}[(1)]
	\item $C$ is called a \emph{$k$-rectifiable} set if there exists a Lipschitz continuous function $f:\R^k\to\R^n$ such that $C\subset f(\R^k)$.
	\item $C$ is called a \emph{countably $k$-rectifiable} set if it is the union of a countable family of $k$-rectifiable sets.
	\item $C$ is called a \emph{countably $\mathcal{H}^k$-rectifiable} set if there exists a countably $k$-rectifiable set $E\subset\R^n$ such that $\mathcal{H}^k(C\setminus E)=0$, where $\mathcal{H}^k$ stands for $k$-dimensional Hausdorff (outer) measure.
\end{enumerate}
\end{defn}


Let $\Omega\subset\R^n$ be an open set and let $H$ be a continuous real-valued function on $\Omega\times\R\times\R^n$. Let us again consider the general nonlinear first order equation
\begin{equation}\label{eq:HJ_general}
	H(t,x,u(x),Du(x))=0,\quad x\in\Omega
\end{equation}
in the unknown $u:\Omega\to\R$.

\begin{defn}
A continuous real-valued function $u$ on $\Omega$ is called a \emph{viscosity solution} of \eqref{eq:HJ_general} if for every $x\in\Omega$ and $\varphi\in C^1(\Omega,\R)$
\begin{enumerate}[(a)]
	\item $u-\varphi$ has a local maximum at $x$ implies $H(t,x,u(x),D\varphi(x))\leqslant0$, or $u$ is a \emph{viscosity sub-solution} of \eqref{eq:HJ_general};
	\item $u-\varphi$ has a local minimum at $x$ implies $H(t,x,u(x),D\varphi(x))\geqslant0$, or $u$ is a \emph{viscosity super-solution} of \eqref{eq:HJ_general}.
\end{enumerate}
\end{defn}

The relation between continuous viscosity solution and its semiconcavity is 


\begin{Pro}
For $u:\Omega\to\R$ semiconcave and $H\in C(\Omega\times\R\times\R^n,\R)$,
\begin{enumerate}[\rm (i)]
	\item if u is a viscosity solution of $H(x,u(x),Du(x))=0$ in $\Omega$, then
	\begin{align*}
		H(x,u(x),p)=0,\quad\forall x\in\Omega,\ p\in D^*u(x);
	\end{align*}
	\item if $H(x,u,\cdot)$ is convex, then
	\begin{align*}
		H(x,u(x),Du(x))=0,\ a.e.,\quad\Longleftrightarrow\quad\mbox{$u$ is a viscosity solution of $H(x,u(x),Du(x))=0$};
	\end{align*}
\end{enumerate}
\end{Pro}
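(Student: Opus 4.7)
The plan is built on three facts about a semiconcave $u$: (1) $u$ is locally Lipschitz, hence differentiable almost everywhere; (2) at any differentiability point $y$ one has $D^+u(y)=D^-u(y)=\{Du(y)\}$, so the standard reformulation of the viscosity sub- and super-solution conditions in terms of sub- and superdifferentials collapses both inequalities at $y$ to the classical equation $H(y,u(y),Du(y))=0$; (3) the superdifferential $D^+u$ is upper semicontinuous and satisfies $D^+u(x)=\mathrm{co}\,D^{\ast}u(x)$, by Proposition \ref{basic_facts_of_superdifferential}.

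For part (i), I would fix $x\in\Omega$ and $p\in D^{\ast}u(x)$, and select differentiability points $x_k\to x$ with $Du(x_k)\to p$, as permitted by the definition of $D^{\ast}u(x)$. By fact (2), $H(x_k,u(x_k),Du(x_k))=0$ for each $k$, and the continuity of $H$ and of $u$ then yields $H(x,u(x),p)=0$ in the limit.

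For part (ii), the implication ``viscosity solution $\Rightarrow$ equation a.e.'' follows from (i) once we note that at every differentiability point $x$ one has $Du(x)\in D^{\ast}u(x)$: indeed, any sequence of differentiability points $y_k\to x$ satisfies $Du(y_k)\in D^+u(y_k)$, and the upper semicontinuity of $D^+u$, together with $D^+u(x)=\{Du(x)\}$, forces $Du(y_k)\to Du(x)$. For the converse, the hypothesis provides a full-measure subset $E\subset\Omega$ on which $u$ is differentiable and $H(x,u(x),Du(x))=0$. I would verify the sub-solution inequality first: given $p\in D^+u(x)=\mathrm{co}\,D^{\ast}u(x)$, convexity of $H(x,u(x),\cdot)$ reduces matters to showing $H(x,u(x),q)\leqslant 0$ for each $q\in D^{\ast}u(x)$. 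I would pick differentiability points $y_k\to x$ with $Du(y_k)\to q$; the upper semicontinuity of $D^+u$ at each $y_k$, where $D^+u(y_k)=\{Du(y_k)\}$, lets me slide $y_k$ to some $\tilde y_k\in E$ with $Du(\tilde y_k)$ arbitrarily close to $Du(y_k)$, so that $Du(\tilde y_k)\to q$ and the equation at $\tilde y_k$ limits to $H(x,u(x),q)=0$. For the super-solution side, if $D^-u(x)\neq\varnothing$ then Proposition \ref{basic_facts_of_superdifferential}(c) forces $u$ to be differentiable at $x$ with $D^-u(x)=\{Du(x)\}$, and the argument just given produces $H(x,u(x),Du(x))=0\geqslant 0$.

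The main obstacle I anticipate is the perturbation step in the converse of (ii): $D^{\ast}u(x)$ is defined through sequences of differentiability points, but the hypothesis furnishes the equation only on a full-measure subset $E$, so one must wiggle each approximating point into $E$ without disturbing the limit of the gradient. The upper semicontinuity of $D^+u$ at a differentiability point, where it is a singleton, is exactly the tool that makes this possible; absent semiconcavity this step would be considerably more delicate.
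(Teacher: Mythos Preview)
The paper states this proposition without proof; it is a standard fact recorded here for later use (the canonical reference is the Cannarsa--Sinestrari monograph \cite{Cannarsa_Sinestrari_book}). Your argument is correct and is essentially the standard one. The only point worth flagging is the perturbation step in the converse of (ii): you are right that it works, and the reason is exactly the one you give---at a differentiability point $y_k$ the set $D^+u(y_k)$ is a singleton, so upper semicontinuity of $D^+u$ forces $Du$ to be continuous at $y_k$ along any approaching sequence of differentiability points, and since $E$ has full measure such sequences exist inside $E$. This is the place where semiconcavity is genuinely used (beyond Lipschitz regularity), and you have identified it correctly.
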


\section{Local propagation of singularities}

\subsection{Rectifiability of $\SING$ for semiconcave functions and viscosity solutions}

For a semiconcave function $u$ on an open subset $\Omega\subset\R^n$, $Du$ is a function of bounded variation (see, for instance, \cite{Evans_Gariepy_book}). The singular set $\SING$ coincides with the \emph{jump set} $S_{Du}$, considered as a function of $BV_{\rm loc}(\Omega,R^n)$ and is a countably $\mathcal{H}^{n-1}$-rectifiable set. Apart from earlier contributions for distance functions as in \cite{Erdos1945}, to our knowledge the first general results about the rectifiability of the singular sets of concave functions are due to Zaj\'i\v{c}ek \cite{Zajicek1978,Zajicek1979} and Vesel\'y \cite{Vesely1986,Vesely1987}.  Similar properties were later extended to semiconcave functions with general modulus in \cite{Alberti_Ambrosio_Cannarsa1992}.

To obtain a fine description of $\SING$ for a semiconcave function on $\Omega$, it is convenient to introduce a hierarchy of subsets of $\SING$ according to the dimension of the superdifferential. The \emph{magnitude} of a point $x\in\Omega$ (with respect to u) is the integer $\kappa(x)=\dim D^+u(x)$. Given an integer $k\in\{0,\ldots,n\}$ we set
\begin{align*}
	\mbox{\rm Sing}^k\,(u)=\{x\in\Omega: \kappa(x)=k\}.
\end{align*}

\begin{Pro}[\cite{Cannarsa_Sinestrari_book}]\label{eq:rect1}
If $u$ is semiconcave in $\Omega$, then the set $\mbox{\rm Sing}^k\,(u)$ is countably $(n-k)$-rectifiable for any integer $k\in\{0,\ldots,n\}$.	In particular, $\SING$ is countably $(n-1)$-rectifiable.
\end{Pro}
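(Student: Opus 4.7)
The plan is to pass from the analytic control of $D^+u$ provided by semiconcavity to a geometric bound on the dimension of the contingent cone of $\mbox{\rm Sing}^k\,(u)$, and then invoke a standard rectifiability criterion. The key ingredient is the quasi-monotonicity estimate
\begin{equation*}
\bigl\langle p - q,\, y - x \bigr\rangle \leqslant C|y-x|^2 \qquad \forall\, x,y\in\Omega,\ p\in D^+u(y),\ q\in D^+u(x),
\end{equation*}
obtained by adding two instances of the semiconcavity inequality of Proposition \ref{criterion-Du_semiconcave}. Replacing $u$ by $u - \frac{C}{2}|\cdot|^2$ further reduces to the concave case $C=0$; this translation shifts $D^+u(x)$ by $Cx$ and hence preserves both $\kappa(x)$ and the sets $\mbox{\rm Sing}^k\,(u)$. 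The extremes are immediate: $\mbox{\rm Sing}^0\,(u)\subseteq\R^n$ is trivially countably $n$-rectifiable, while for $k=n$ pure monotonicity forces points with full-dimensional superdifferential to be isolated, yielding countable $0$-rectifiability.

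Now fix $1 \leqslant k \leqslant n-1$ and $x_0 \in \mbox{\rm Sing}^k\,(u)$, and set $V(x_0) := \mathrm{span}\bigl(D^+u(x_0) - D^+u(x_0)\bigr)$, a $k$-plane. The central geometric claim is the tangent cone bound
\begin{equation*}
T_{x_0}\bigl(\mbox{\rm Sing}^k\,(u)\bigr) \subseteq V(x_0)^{\perp},
\end{equation*}
where $T_{x_0}$ denotes the Bouligand contingent cone. To prove it, let $x_m \in \mbox{\rm Sing}^k\,(u)$ with $x_m \to x_0$ and $w_m := (x_m-x_0)/|x_m-x_0| \to w$. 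Quasi-monotonicity, divided by $|x_m-x_0|$ and passed to the limit, gives $\langle p^\ast - q, w\rangle \leqslant 0$ for every $q \in D^+u(x_0)$ and every limit point $p^\ast$ of any sequence $p_m\in D^+u(x_m)$; hence $p^\ast$ lies in the face $F_w$ of $D^+u(x_0)$ on which $\langle \cdot,w\rangle$ is minimized. If $w \notin V(x_0)^\perp$, then $F_w$ is a proper face with $\dim F_w < k$. On the other hand, for each $m$ one may select affinely independent points $p_0^m,\ldots,p_k^m \in D^+u(x_m)$ and normalize their pairwise differences by the diameter of $D^+u(x_m)$; a compactness argument then produces in the limit $k$ linearly independent vectors tangent to $F_w$, contradicting $\dim F_w < k$. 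Therefore $w \in V(x_0)^\perp$.

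The conclusion follows by a standard measure-theoretic packaging. The Borel map $x_0 \mapsto V(x_0)^{\perp}$ takes values in the separable Grassmannian $G(n,n-k)$, so $\mbox{\rm Sing}^k\,(u)$ decomposes into countably many Borel pieces, on each of which $V(x_0)^{\perp}$ lies in a prescribed neighborhood of a fixed $(n-k)$-plane $W$. By the tangent cone bound, each such piece has contingent cone contained in a narrow cone about $W$, and therefore coincides locally with the graph of a Lipschitz function defined on a subset of $W$; this yields countable $(n-k)$-rectifiability. The final assertion is then obtained by writing $\SING = \bigcup_{k=1}^n \mbox{\rm Sing}^k\,(u)$: for $k\geqslant 1$ each term is countably $(n-k)$-rectifiable, hence countably $(n-1)$-rectifiable, and a countable union preserves the property.

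The main obstacle lies in the tangent cone bound, specifically in showing that the $k$-dimensional spread of $D^+u(x_m)$ cannot collapse to a lower-dimensional set in the limit when the approaching direction $w$ is not orthogonal to $V(x_0)$. A naive upper-semicontinuity argument produces limit points inside $F_w$ but loses their affine independence; the correct approach normalizes the selected extreme points by the diameter of $D^+u(x_m)$ before extracting a convergent subsequence and exploits the constancy $\dim D^+u(x_m) = k$ to secure a rank-preserving limit.
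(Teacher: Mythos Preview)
The paper does not give its own proof of this proposition; it is stated with a citation to \cite{Cannarsa_Sinestrari_book}, where the full argument appears. So there is no in-paper proof to compare against, and the relevant question is whether your argument stands on its own and how it relates to the standard one.

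Your reduction to the concave case and the final packaging via a Grassmannian decomposition are fine. The gap is precisely where you flag it: the claim that, after normalizing the differences $p_i^m-p_0^m$ by $\mathrm{diam}\,D^+u(x_m)$, a subsequence yields $k$ linearly \emph{independent} limit vectors. Normalizing by the diameter controls only the longest axis of the simplex $[p_0^m,\ldots,p_k^m]$; nothing prevents the remaining axes from collapsing (a sequence of increasingly thin $k$-simplices with diameter $1$ and $k$-volume tending to $0$). Saying that you ``exploit the constancy $\dim D^+u(x_m)=k$'' restates the hypothesis but does not supply the missing quantitative nondegeneracy. Without this, the tangent cone inclusion $T_{x_0}\bigl(\mbox{\rm Sing}^k\,(u)\bigr)\subset V(x_0)^\perp$ is unproven, and the rest of the argument does not go through.

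The proof in \cite{Cannarsa_Sinestrari_book} avoids this difficulty by \emph{not} passing through a pointwise tangent cone bound at all. Instead one fixes in advance a rational $(k{+}1)$-tuple $(q_0,\ldots,q_k)$ spanning a $k$-plane $V$ and a small $\delta>0$, and considers the set of $x$ for which $D^+u(x)$ contains points $\delta$-close to each $q_i$. Monotonicity applied to two such points $x,x'$, using $p_i\in D^+u(x)$ near $q_i$ and $p_j'\in D^+u(x')$ near $q_j$, gives $|\langle q_i-q_0,x-x'\rangle|\leqslant 2\delta|x-x'|$ for every $i$, which forces $x-x'$ to lie in a narrow cone about $V^\perp$ and hence yields the Lipschitz graph directly. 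Because the $q_i$ are fixed, no limit is taken and no rank-collapse issue arises; the countable union over rational data then covers all of $\mbox{\rm Sing}^k\,(u)$. If you want to salvage your tangent-cone route, you would need an equivalent a priori lower bound on the ``inner width'' of $D^+u(x_m)$ relative to its diameter, and it is not clear such a bound is available in general.
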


Now, we turn to the analyze of rectifiability of $\overline{\SING}$, with $u$ the value function to the classical one free endpoint problem from calculus of variation, i.e.,
\begin{equation}\label{eq:CV_tx}\tag{CV$_{t,x}$}
	u(t,x)=\inf_{\xi\in\mathcal{A}_{t,x}}u_0(\xi(0))+\int^t_0L(s,\xi(s),\dot{\xi}(s))\ ds,\quad (t,x)\in(0,T)\times\R^n,
\end{equation}
where $L$ is a Tonelli Lagrangian of class $C^{k+1}$ ($k\geqslant 1$) and $u_0$ is of class $C^{k+1}$, and $\mathcal{A}_{t,x}$ is the set of all absolutely continuous curves $\xi:[0,t]\to\R^n$ such that $\xi(t)=x\in\R^n$.

We have already seen in Proposition \ref{eq:rect1} that $\SING$ is countably $(n-1)$-rectifiable. Recall that, under the assumption on $L$ and $u_0$, 
\begin{align*}
	\overline{\SING}=\SING\cup\mbox{\rm Conj}\,(u),
\end{align*}
where $\mbox{\rm Conj}\,(u)$ is the set of conjugate points of problem \eqref{eq:CV_tx} (see \cite[Page 155]{Cannarsa_Sinestrari_book} or \cite{Cannarsa_Mennucci_Sinestrari1997} for the definition). So, we only need to analyze the rectifiability of $\mbox{\rm Conj}\,(u)$. By a Sard type argument (\cite{Fleming1969}) one has $\mathcal{H}^{n+1/k}(\mbox{\rm Conj}\,(u))=0$. However, the above estimate does not imply the rectifiability of $\mbox{\rm Conj}\,(u)$ even if $u_0$ is of class $C^{\infty}$. 

\begin{Pro}[\cite{Cannarsa_Mennucci_Sinestrari1997}]
Under previous assumption,
\begin{enumerate}[\rm (a)]
	\item $\overline{\SING}=\SING\cup\mbox{\rm Conj}\,(u)$.
	\item $\mbox{\rm Conj}\,(u)$ is countably $\mathcal{H}^n$-rectifiable, and so is $\overline{\SING}$.
	\item $\mathcal{H}^{n-1+2/k}(\SING\setminus\mbox{\rm Conj}\,(u))=0$.
	\item If $L$ and $u_0$ is of class $C^{\infty}$, then $\dim_{\mathcal{H}}(\SING\setminus\mbox{\rm Conj}\,(u))\leqslant n-1$.
\end{enumerate}	
\end{Pro}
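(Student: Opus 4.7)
The plan is to translate the analysis of $\overline{\SING}$ into the study of a single smooth ``extremal map'' attached to \eqref{eq:CV_tx}. Under the standing regularity of $L$ and $u_0$, every minimizer $\xi$ of \eqref{eq:CV_tx} is a $C^{k+2}$ Euler--Lagrange extremal and the transversality condition at the free endpoint forces $p(0)=\nabla u_0(\xi(0))$. This suggests introducing
\[
\Phi:(0,T)\times\R^n\longrightarrow (0,T)\times\R^n,\qquad
\Phi(t,y)=\bigl(t,\pi\,\phi^H_t(y,\nabla u_0(y))\bigr),
\]
where $\phi^H$ is the flow of the Hamiltonian dual to $L$ and $\pi$ is the spatial projection. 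Then $\Phi$ is a $C^k$ map between $(n+1)$-manifolds; every $(t,x)\in(0,T)\times\R^n$ lies in its image; the preimages of $(t,x)$ are in bijection with the minimizers reaching $(t,x)$; and $(t,x)\in\mbox{\rm Conj}\,(u)$ precisely when $\Phi$ is critical at one of those preimages.

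For (a), if $(t_j,x_j)\in\SING$ converges to $(t,x)$, one extracts two families of distinct minimizers $\xi_j^1\neq\xi_j^2$ and, by compactness, limit minimizers $\xi^1,\xi^2$ at $(t,x)$. If $\xi^1\neq\xi^2$ then $(t,x)\in\SING$; otherwise the second variation along the common limit extremal degenerates, forcing $\det D\Phi=0$ and hence $(t,x)\in\mbox{\rm Conj}\,(u)$. The converse inclusion is obtained by perturbing the critical values of $D\Phi$ near a conjugate point to produce nearby fold singularities of $u$.

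For (b), since $\Phi\in C^k$ with $k\geq 1$, the critical set $\mathrm{Crit}(\Phi)=\{\det D\Phi=0\}$ is a closed subset of an $(n+1)$-manifold. Stratifying by rank and applying the implicit function theorem to a maximal nonvanishing minor of $D\Phi$ on each stratum exhibits $\mathrm{Crit}(\Phi)$ as a countable union of $C^{k-1}$ graphs over $n$ variables, hence as a countably $n$-rectifiable set. Since $\Phi$ is locally Lipschitz, $\mbox{\rm Conj}\,(u)=\Phi(\mathrm{Crit}(\Phi))$ inherits countable $\mathcal{H}^n$-rectifiability; combined with Proposition \ref{eq:rect1} and part (a) this proves the $\mathcal{H}^n$-rectifiability of $\overline{\SING}$.

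For (c) and (d), the point is to quantify how thin $\SING\setminus\mbox{\rm Conj}\,(u)$ is. Near any such point, $\Phi$ is a local diffeomorphism at each of its preimages, so the inverse function theorem yields finitely many local $C^{k+1}$ classical solutions $v_1,\ldots,v_\ell$ of the Hamilton--Jacobi equation with $u=\min_i v_i$. The local portion of $\SING$ is covered by the coincidence sets $\{v_i=v_j\}$ together with the strata where three or more $v_i$ agree or where $\nabla(v_i-v_j)$ vanishes; a quantitative Federer/Bates-type Morse--Sard estimate applied to these $C^{k+1}$ coincidence functions yields $\mathcal{H}^{n-1+2/k}(\SING\setminus\mbox{\rm Conj}\,(u))=0$, and letting $k\to\infty$ gives the bound of (d). The main obstacle here is the sharp exponent $n-1+2/k$: the classical Morse--Sard theorem only delivers Lebesgue-null critical values, so one must invoke the Hausdorff-measure refinement and carefully track the loss of derivatives between $(L,u_0)\in C^{k+1}$, the flow $\Phi\in C^k$, and the coincidence functions, since an imprecise count of regularity breaks the consistency between (c) and (d).
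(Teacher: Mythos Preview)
This proposition is quoted in the survey without proof (it is attributed to \cite{Cannarsa_Mennucci_Sinestrari1997}), so there is no argument in the present paper to compare yours with; the comments below concern the correctness of your sketch against the original source.

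Your overall architecture---encoding everything through the extremal map $\Phi(t,y)=(t,\pi\,\phi^H_t(y,\nabla u_0(y)))$---is exactly the right one, and parts (a) (forward inclusion), (c), (d) are argued along the lines of \cite{Cannarsa_Mennucci_Sinestrari1997}. There is, however, a genuine gap in your proof of (b). You assert that ``stratifying by rank and applying the implicit function theorem to a maximal nonvanishing minor of $D\Phi$'' shows that $\mathrm{Crit}(\Phi)=\{\det D\Phi=0\}$ is a countable union of $C^{k-1}$ graphs over $n$ variables. This is false in general: for $k=1$ the function $\det D\Phi$ is merely continuous, so its zero set can be \emph{any} closed subset of $(0,T)\times\R^n$, in particular one of positive $(n{+}1)$-dimensional measure. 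Even for larger $k$ the rank of $D\Phi$ is not locally constant on the critical set, so no single minor is available on which to run an implicit-function argument. Worse, the conclusion you actually need---that the critical \emph{values} $\Phi(\mathrm{Crit}(\Phi))$ are countably $\mathcal H^n$-rectifiable---also fails for general $C^1$ maps between equidimensional spaces: already $\Phi(t,y)=(t,g(y))$ with $g\in C^1(\R)$ and $g'=0$ on a fat Cantor set $C$ gives $\Phi(\mathrm{Crit}(\Phi))=\R\times g(C)$, which is not countably $\mathcal H^1$-rectifiable when $g$ is strictly monotone. The argument in \cite{Cannarsa_Mennucci_Sinestrari1997} does not proceed through rectifiability of the full critical set; it exploits the variational structure, specifically Jacobi's theorem that a minimizer cannot contain an interior conjugate point. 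Hence the only critical points of $\Phi$ that contribute to $\mathrm{Conj}(u)$ are \emph{first} conjugate points, and these are parametrized by the $n$-dimensional family of initial positions $y$, which is what delivers the countable $\mathcal H^n$-rectifiability.

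A smaller point on (a): ``perturbing the critical values of $D\Phi$ to produce nearby fold singularities'' is not an argument for $\mathrm{Conj}(u)\subset\overline{\SING}$. The actual mechanism is again Jacobi's theorem: immediately beyond the conjugate time the extremal stops being a minimizer, so optimality must pass to a different extremal and the switching locus lies in $\SING$; letting the time decrease to the conjugate time places the conjugate point in $\overline{\SING}$.
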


If the initial datum $u_0$ has weaker regularity than $C^2$, then $\overline{\SING}$ can fail to be countably $\mathcal{H}^n$-rectifiable, see \cite[Example 6.6.13]{Cannarsa_Sinestrari_book}. Notice that, in the mentioned example, $L$ is of class $C^{\infty}$. For the further progress along this line, see \cite{Pignotti2002,Mennucci2004}.

\subsection{Generalized characteristics}

Let $\Omega\subset\R^n$ be open and let $u:\Omega\to\R$ be a Lipschitz and semiconcavity viscosity solution of the Hamilton-Jacobi equation
\begin{equation}\label{eq:HJ_s_locl}
	H(x,u(x),Du(x))=0,\quad x\in\Omega.
\end{equation}
The notion of generalized characteristics with respect to $(H,u)$ plays a central r\^ole in the study the phenomenon that the singularities propagates along a Lipschitz curve from an initial point $x_0\in\SING$. 

\subsubsection{Propagation of singularities for general semiconcave functions}

Before dealing with viscosity solutions of \eqref{eq:HJ_s_locl}, we begin with a result concerning propagation of singularities for semiconcave functions with linear modulus. 

\begin{Pro}[\cite{Albano_Cannarsa1999}]\label{pro:AC1}
Let $u:\Omega\to\R$ be a semiconcave function. If $x_0\in\SING$, or
\begin{equation}\label{eq:Singlar_point2}
	\partial D^+u(x_0)\setminus D^*u(x_0)\not=\varnothing,
\end{equation}
then, there exists a Lipschitz singular arc $\mathbf{x}:[0,\tau]\to\Omega$ with $\mathbf{x}(0)=x_0$ such that $\dot{\mathbf{x}}^+(0)$ exists, $\dot{\mathbf{x}}^+(0)\not=0$ and
\begin{align*}
	\inf_{t\in[0,\tau]}\mbox{\rm diam}\,(D^+u(\mathbf{x}(t)))>0.
\end{align*} 
\end{Pro}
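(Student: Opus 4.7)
The plan is to construct the singular arc as the limit of smooth trajectories, once the hypothesis has been put into a normalized form.

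\emph{Step 1 (Non-reachable boundary gradient and reduction).} Both forms of the hypothesis produce a gradient $p_0\in\partial D^+u(x_0)\setminus D^*u(x_0)$: in the second case directly, in the first case via Proposition~\ref{basic_facts_of_superdifferential}(a),(d), which forces $D^+u(x_0)=\mathrm{co}\,D^*u(x_0)$ to have positive dimension so that its boundary contains non-reachable convex combinations of elements of $D^*u(x_0)$. Translating so that $x_0=0$ and subtracting $\langle p_0,\cdot\rangle$ from $u$ reduces the problem to the normalized setting $x_0=0$ with $0\in\partial D^+u(0)\setminus D^*u(0)$, in which Proposition~\ref{criterion-Du_semiconcave} (applied with $p=0\in D^+u(0)$) supplies the quadratic upper bound $u(x)\leqslant u(0)+C|x|^2/2$ near the origin.

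\emph{Step 2 (Geometric separation).} Compactness of $D^*u(0)$ together with $0\notin D^*u(0)$ produces $\delta>0$ with $|p|\geqslant 2\delta$ for every $p\in D^*u(0)$, and upper semicontinuity (Proposition~\ref{basic_facts_of_superdifferential}(b)) propagates this separation to $D^*u(x)$ for $x$ in a small neighborhood of $0$. Since $0\in\partial D^+u(0)$ and $D^+u(0)$ has positive dimension, one selects a supporting hyperplane of $D^+u(0)$ at $0$ with exterior unit normal $-\theta$, so that $\langle\theta,p\rangle\leqslant 0$ for every $p\in D^+u(0)$, with strict inequality at some $p^{\ast}\in D^*u(0)$.

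\emph{Step 3 (Approximating curves and limit).} Mollifying $u$ produces $u_\varepsilon:=u*\rho_\varepsilon\in C^\infty$, still $C$-semiconcave, with $Du_\varepsilon(x)$ close to $D^+u(x)$ on compact sets. Solve on $[0,\tau]$ an ODE $\dot{\mathbf{y}}_\varepsilon=F_\varepsilon(\mathbf{y}_\varepsilon)$, $\mathbf{y}_\varepsilon(0)=0$, in which the vector field $F_\varepsilon$ is built from $Du_\varepsilon$ and $\theta$ so as to drive the trajectory into the ``singular tube'' identified in Step~2, namely the set of points where $D^+u$ simultaneously contains a gradient close to $0$ and a reachable gradient of norm $\geqslant\delta$. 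Equi-Lipschitz bounds on $\mathbf{y}_\varepsilon$ and Ascoli--Arzel\`a yield a Lipschitz limit $\mathbf{x}$ on $[0,\tau]$ with $\dot{\mathbf{x}}^+(0)\neq 0$, and the inequality $\inf_t \mathrm{diam}(D^+u(\mathbf{x}(t)))>0$ follows from the persistence, via upper semicontinuity, of the two well-separated gradients along $\mathbf{x}$.

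\emph{Main obstacle.} The delicate point is Step~3: the vector field $F_\varepsilon$ must be chosen so that the limit arc truly remains in $\SING$ and does not drift into the differentiability region of $u$. Upper semicontinuity of $D^+u$ only controls the superdifferential from above, so preventing its collapse to a singleton along $\mathbf{x}$ requires both the quadratic upper bound from Step~1 (which protects a near-zero gradient in $D^+u(\mathbf{x}(t))$) and the transversality built into $\theta$ (which keeps a reachable gradient at distance $\geqslant\delta$). Balancing these two ingredients in the construction of $F_\varepsilon$, so that they survive the passage $\varepsilon\to 0^+$, is the heart of the argument.
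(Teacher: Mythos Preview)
Your Step~1 contains a genuine error. You claim that $x_0\in\SING$ forces $\partial D^+u(x_0)\setminus D^*u(x_0)\neq\varnothing$ because ``its boundary contains non-reachable convex combinations''. This is false: take $u(x)=-|x|$ on $\R^n$ with $n\geqslant 2$. Then $D^*u(0)=S^{n-1}$, $D^+u(0)=\overline{B}(0,1)$, and $\partial D^+u(0)=S^{n-1}=D^*u(0)$, so the difference is empty. (Indeed, $0$ is an \emph{isolated} singular point of this $u$, so no non-constant singular arc exists from it.) The boundary of the convex hull of a compact set $K$ need not contain points outside $K$. The substantive hypothesis is condition~\eqref{eq:Singlar_point2}, and that is what the paper's argument actually uses.

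More importantly, your overall strategy diverges from the paper's and leaves the core step unresolved. The paper does \emph{not} mollify and pass to a limit of ODE trajectories. Instead, having fixed $p_0\in\partial D^+u(x_0)\setminus D^*u(x_0)$ and a vector $q\neq 0$ with $\langle p-p_0,q\rangle\geqslant 0$ for all $p\in D^+u(x_0)$ (your $\theta$ plays the role of $q$), the paper defines
\[
\phi_s(x)=u(x)-u(x_0)-\langle p_0-q,\,x-x_0\rangle-\frac{1}{2s}|x-x_0|^2
\]
and takes $\mathbf{x}(s)$ to be the \emph{unique maximizer} of the strictly concave function $\phi_s$ for small $s>0$. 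This is an explicit, variational construction: Fermat's rule immediately places a specific element in $D^+u(\mathbf{x}(s))$, and the hypothesis $p_0\notin D^*u(x_0)$ is then used directly to exhibit a second, well-separated element, giving the uniform lower bound on the diameter. No approximation, no unspecified vector field, no limit passage.

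By contrast, your Step~3 never says what $F_\varepsilon$ is, and you yourself flag this as the ``main obstacle''. That is not a technicality: without a concrete choice of $F_\varepsilon$ there is no mechanism forcing the limit arc to stay in $\SING$, and upper semicontinuity of $D^+u$ alone cannot supply one (it bounds the superdifferential from above, not below). The paper's $\phi_s$-maximizer construction is precisely the missing idea; the mollification/ODE route you outline is closer in spirit to the later Cannarsa--Yu argument for \emph{generalized characteristics of solutions of Hamilton--Jacobi equations}, where a Hamiltonian supplies the vector field, and is not how Proposition~\ref{pro:AC1} (a result about bare semiconcave functions) is proved.
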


We note that condition \eqref{eq:Singlar_point2} is equivalent to the existence of two vectors $p_0\in D^+u(x_0)\setminus D^*u(x_0)$ and $q\in\R^n\setminus\{0\}$ such that
\begin{equation}\label{eq:normal_cone}
	\langle p-p_0,q\rangle\geqslant0,\quad\forall p\in D^+u(x_0).
\end{equation}
We will see later the importance of condition \eqref{eq:Singlar_point2} which was initially pointed out in \cite{Ambrosio_Cannarsa_Soner1993}. The key idea of the proof of Proposition \ref{pro:AC1} is to construct a function
\begin{align*}
	\phi_s(x)=u(x)-u(x_0)-\langle p_0-q,x-x_0\rangle-\frac 1{2s}|x-x_0|^2,\quad x\in\Omega.
\end{align*}
Being strictly concave for small $s>0$, $\phi_s$ has a unique maximizer $x_s$ in a small neighborhood of $x_0$ in $\Omega$. The curve $s\mapsto x_s$ is exactly the local singular arc constructed in Proposition \ref{pro:AC1}. It is rather surprising that a similar idea also works with the intrinsic singular characteristics, for the study of which the term $\frac 1{2s}|x-x_0|^2$ will be replaced by the fundamental solution.

\subsubsection{Generalized characteristics}

Applying the basic idea from \cite{Albano_Cannarsa1999} to the viscosity solutions of \eqref{eq:HJ_s_locl}, Albano and the first author introduced the notion of generalized characteristic in \cite{Albano_Cannarsa2002}.

Suppose $H:\overline{\Omega}\times\R\times\R^n\to\R$ is a continuous function satisfying the following conditions:
\begin{enumerate}[({A}1)]
	\item $p\mapsto H(x,u,p)$ is convex;
	\item for any $x\in\Omega$ and $u\in\R^n$ the function $H(x,u,\cdot)$ is uniformly quasi-convex, or the $0$-level set $\{p\in\R^n: H(x,u,p)=0\}$ contains no straight line;
	\item for any $x_1,x_2\in\Omega$, $u_1,u_2\in\R$, $p\in\R^n$
	\begin{align*}
		|H(x_1,u_1,p)-H(x_2,u_2,p)|\leqslant C_0(|x_1-x_2|+|u_1-u_2|)
	\end{align*}
	for some constant $C_0>0$;
	\item $H$ is differentiable with respect to $p$ and, for any $x_1,x_2\in\Omega$, $u_1,u_2\in\R$, $p_1,p_2\in\R^n$
	\begin{align*}
		|H_p(x_1,u_1,p_1)-H_p(x_2,u_2,p_2)|\leqslant C_1(|x_1-x_2|+|u_1-u_2|+|p_1-p_2|)
	\end{align*}
	for some constant $C_1>0$.
\end{enumerate}

\begin{Pro}[\cite{Albano_Cannarsa2002}]\label{pro:AC2}
Suppose $H$	satisfies (A1)-(A4). Let $u$ be a locally semiconcave solution of \eqref{eq:HJ_s_locl} and let $x_0\in\SING$ be such that
\begin{equation}\label{eq:noncritical_weak}
	0\not\in\mbox{\rm co}\, H_p(x_0,u(x_0),D^+u(x_0)).
\end{equation}
Then, there exists a Lipschitz arc $\mathbf{x}:[0,\tau]\to\Omega$ satisfying the following. 
\begin{enumerate}[\rm (1)]
	\item $\mathbf{x}$ is a generalized characteristic with respect to $(H,u)$ from $x_0$, that is,
	\begin{align}\label{intro:gc}
	\begin{cases}
		\dot{\mathbf{x}}(s)\in\mathrm{co}\,H_p\big(\mathbf{x}(s),D^+u(\mathbf{x}(s))\big)&\quad \text{a.e.}\;s\in[0,\tau],\\
		{\mathbf{x}}(0)=x_0.&
	\end{cases}
	\end{align}
	\item $\mathbf{x}$ is an injection.
	\item $\mathbf{x}(t)\in\SING$ for all $t\in[0,\tau]$.
	\item $\dot{\mathbf{x}}^+(0)$ exists and $\dot{\mathbf{x}}^+(0)=H_p(x_0,u(x_0),p_0)$ where $p_0=\arg\min_{p\in D^+u(x_0)}H(x_0,u(x_0),p)$.
\end{enumerate}
\end{Pro}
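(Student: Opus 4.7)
The plan is to adapt the concave-penalty argument underlying Proposition \ref{pro:AC1}, with the "outward direction" $q$ now dictated by the Hamiltonian. Let $p_0$ be the unique minimizer of $H(x_0,u(x_0),\cdot)$ on the compact convex set $D^+u(x_0)$; uniqueness is guaranteed by the uniform quasi-convexity (A2). Since $u$ is a viscosity subsolution and $D^+u(x_0)=\mathrm{co}\,D^*u(x_0)$ by Proposition \ref{basic_facts_of_superdifferential}(d), convexity of $H$ in $p$ gives $H(x_0,u(x_0),p)\leqslant 0$ for every $p\in D^+u(x_0)$, so $p_0\in\partial D^+u(x_0)$. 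Set $v_0:=H_p(x_0,u(x_0),p_0)$. The first-order optimality condition on a convex set yields $\langle v_0,p-p_0\rangle\geqslant 0$ for all $p\in D^+u(x_0)$, which together with the non-criticality assumption \eqref{eq:noncritical_weak} and (A2) forces $v_0\neq 0$. Thus $v_0$ plays exactly the role of the vector $q$ in condition \eqref{eq:normal_cone}, bridging Proposition \ref{pro:AC1} to the Hamilton-Jacobi setting.

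For $0<s<1/C$, where $C$ is the semiconcavity constant of $u$, I would introduce the auxiliary function
\[
\phi_s(x)\;:=\;u(x)-u(x_0)-\langle p_0-v_0,x-x_0\rangle-\frac{1}{2s}|x-x_0|^2,
\]
which is strictly concave on a small ball about $x_0$ and hence attains its maximum at a unique interior point $x_s$. The first-order condition (Proposition \ref{criterion-Du_semiconcave}) gives $p_s:=p_0-v_0+(x_s-x_0)/s\in D^+u(x_s)$. Combining $\phi_s(x_s)\geqslant\phi_s(x_0)=0$ with the semiconcavity inequality at $p_0\in D^+u(x_0)$ yields $|x_s-x_0|=O(s)$ and, in the limit $s\to 0^+$, the identifications $(x_s-x_0)/s\to v_0$ and $p_s\to p_0$. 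Setting $\mathbf{x}(s):=x_s$ (with $\mathbf{x}(0):=x_0$), this already delivers item (4); injectivity near zero (item (2)) is immediate from $\dot{\mathbf{x}}^+(0)=v_0\neq 0$; and item (3) is argued by contradiction: if $D^+u(x_s)$ were a singleton, then $p_s\in D^*u(x_s)$ would satisfy $H(x_s,u(x_s),p_s)=0$, but the asymptotic displacement $p_s-p_0$ lies in a direction transverse to the minimizing face of $H(x_0,u(x_0),\cdot)$ on $D^+u(x_0)$, contradicting the extremality of $p_0$.

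The delicate step, and the main obstacle, is verifying the differential inclusion in item (1) at generic $s\in(0,\tau]$ and not merely at the initial point. The first-order optimality provides a single selection $p_s\in D^+u(\mathbf{x}(s))$, but one must translate this into the inclusion $\dot{\mathbf{x}}(s)\in\mathrm{co}\,H_p(\mathbf{x}(s),u(\mathbf{x}(s)),D^+u(\mathbf{x}(s)))$ at almost every $s$. The natural route is to establish a Lipschitz bound on $s\mapsto x_s$ by comparing maximizers $x_s,x_{s'}$ via the semiconcavity estimate, then differentiate the relation defining $p_s$, exploit Lipschitz regularity of $H_p$ from (A4), upper semicontinuity of $D^+u$ from Proposition \ref{basic_facts_of_superdifferential}(b), and a Carath\'eodory-type limiting argument together with convexity of $D^+u(\mathbf{x}(s))$ to trap the tangent vector inside the desired convex hull. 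This is where the full strength of assumptions (A1)--(A4) is genuinely needed and marks the substantial upgrade over Proposition \ref{pro:AC1}.
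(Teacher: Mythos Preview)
Your one–step construction captures exactly the content of Proposition~\ref{pro:AC1} with the Hamiltonian-dictated normal direction $v_0=H_p(x_0,u(x_0),p_0)$, and the paper's proof indeed begins here. Items (2)--(4) for the curve $s\mapsto x_s$ follow as you outline. The difficulty is item (1), and your proposed route to it does not work.

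The obstruction is structural: in the definition of $\phi_s$ the linear term $\langle p_0-v_0,\cdot\rangle$ is frozen at the initial data $(x_0,p_0)$, so the identity you obtain,
\[
\frac{x_s-x_0}{s}=p_s-p_0+v_0,\qquad p_s\in D^+u(x_s),
\]
relates the \emph{secant} velocity to $v_0=H_p(x_0,u(x_0),p_0)$, not to any value of $H_p$ at $(\mathbf{x}(s),u(\mathbf{x}(s)),D^+u(\mathbf{x}(s)))$. Differentiating in $s$ produces $\dot{x}_s=p_s-p_0+v_0+s\dot p_s$, and there is simply no mechanism that would force this vector into $\mathrm{co}\,H_p(\mathbf{x}(s),u(\mathbf{x}(s)),D^+u(\mathbf{x}(s)))$ for $s>0$; the Lipschitz bound on $H_p$ from (A4) controls the \emph{size} of the error but gives no inclusion. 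The ``Carath\'eodory-type limiting argument'' you invoke would require a sequence of piecewise selections whose velocities already lie in the target set---which is precisely what is missing.

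The paper closes this gap by an Euler segment approximation layered on top of the one-step construction. One partitions $[0,\tau]$ with mesh $h$, and at each node $x_k$ (which is singular by the previous step) one re-evaluates the minimal-energy selection $p_k=\arg\min_{D^+u(x_k)}H(x_k,u(x_k),\cdot)$ and applies your concave-penalty argument with direction $v_k=H_p(x_k,u(x_k),p_k)$ to produce a short singular arc to $x_{k+1}$. On each segment the velocity is, up to $O(h)$, equal to $v_k\in H_p(x_k,u(x_k),D^+u(x_k))$; by upper semicontinuity of $D^+u$ (Proposition~\ref{basic_facts_of_superdifferential}(b)) and the Lipschitz bound (A4), the limit as $h\to0$ of these polygonal arcs is a Lipschitz curve whose derivative lies a.e.\ in $\mathrm{co}\,H_p(\mathbf{x}(s),u(\mathbf{x}(s)),D^+u(\mathbf{x}(s)))$. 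The energy estimate stated right after the proposition, $H(\mathbf{x}(s),u(\mathbf{x}(s)),\mathbf{p}(s))\leqslant\tfrac12 H(x_0,u(x_0),p_0)<0$, is what keeps the iterated construction inside $\mbox{Sing}\,(u)$ and yields (3) for the limiting curve. So your first two paragraphs are the correct seed, but the missing ingredient is the iteration; the single arc $s\mapsto x_s$ from one application of $\phi_s$ is in general \emph{not} a generalized characteristic away from $s=0$.
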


The proof of Proposition \ref{pro:AC2} uses the result in Proposition~\ref{pro:AC1} together with an Euler segment approximation method. Moreover, one can also derive the useful energy estimate
\begin{align*}
	H(\mathbf{x}(s),u(\mathbf{x}(s)),\mathbf{p}(s))\leqslant\frac 12H(x_0,u(x_0),p_0),\quad s\in[0,\tau],
\end{align*}
where $\mathbf{p}:[0,\tau]\to\R^n$ is defined by $\mathbf{p}(0)=p_0$ and
\begin{align*}
	\mathbf{x}(s)-x_0=s[\mathbf{p}(s)-p_0+H_p(x_0,u(x_0), p_0)],\quad \forall s\in(0,\tau].
\end{align*}
This kind of energy estimate can be used to deduce  global propagation results. 

\subsubsection{An approximation method and singular characteristics}

Needless to say, the proof of Proposition \ref{pro:AC1} and Proposition \ref{pro:AC2}  utilises techniques from nonsmooth analysis and control theory. A simpler method was introduced in \cite{Yu2006} and \cite{Cannarsa_Yu2009}. The following  approximation lemma, proved in \cite{Cannarsa_Yu2009}, will be frequently used in what follows.

\begin{Lem}\label{lem:approximation}
Given a semiconcave function on $\Omega$, we assume there are positive constants $L_i$, $i=0,1,2$, such that $|u(x)|\leqslant L_0$ for all $x\in\Omega$, $|Du(x)|\leqslant L_1$ for almost all $x\in\Omega$, and $u$ has semiconcavity constant $L_2$. Let $x_0\in\Omega$ and let $V$ be an open subset of $\Omega$ such that $x_0\in V\subset\overline{V}\subset\Omega$. Then for any $p\in D^+u(x_0)$ there is a sequence $\{u_m\}_{m\geqslant1}\subset C^{\infty}(\overline{V})$ such that
\begin{enumerate}[\rm (a)]
	\item $|u_m(x)|\leqslant L_0$, $|Du_m(x)|\leqslant L_1$, $D^2u_m(x)\leqslant L_2I_n$ for all $x\in V$,
	\item $\lim_{m\to\infty}u_m=u$ uniformly in $\overline{V}$ and $\lim_{m\to\infty}Du_m(x_0)=p$.
\end{enumerate}
\end{Lem}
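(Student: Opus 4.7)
The plan is to combine the standard Friedrichs mollification with a Carathéodory decomposition in order to target an arbitrary $p \in D^+u(x_0)$. First, fix a nonnegative standard mollifier $\rho_\epsilon \in C^\infty_c(B_\epsilon)$ with $\int \rho_\epsilon = 1$, and for $\epsilon$ smaller than $\mathrm{dist}(\overline V, \partial \Omega)$ consider $u_\epsilon := u * \rho_\epsilon$, which is smooth on a neighborhood of $\overline V$ and converges uniformly to $u$ there. The bounds $|u_\epsilon| \leqslant L_0$ and $|Du_\epsilon| \leqslant L_1$ (using $Du_\epsilon(x) = \int Du(x-y)\rho_\epsilon(y)\,dy$ via Rademacher) are immediate convex-averaging estimates. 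The Hessian bound $D^2 u_\epsilon \leqslant L_2 I_n$ follows because $u - \tfrac{L_2}{2}|\cdot|^2$ is concave by hypothesis, and convolving a concave function against a nonnegative kernel preserves concavity, up to the additive constant $\tfrac{L_2}{2}\int |y|^2 \rho_\epsilon(y)\,dy$.

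The main difficulty is arranging $Du_m(x_0) \to p$ for an \emph{arbitrary} $p \in D^+u(x_0)$, since plain mollification controls only the kernel's mean and, in the limit, pins down a specific gradient rather than an arbitrary element of the superdifferential. To overcome this, I would invoke Proposition \ref{basic_facts_of_superdifferential}(d) together with Carathéodory's theorem to write $p = \sum_{i=1}^{n+1} \lambda_i p_i$ with $\lambda_i \geqslant 0$, $\sum_i \lambda_i = 1$, and $p_i \in D^*u(x_0)$. For each $i$, pick a sequence $x_k^i \to x_0$ of differentiability points of $u$ with $Du(x_k^i) \to p_i$; by Rademacher together with the Lebesgue differentiation theorem applied to $Du \in L^\infty$, these $x_k^i$ can be chosen from a set of full measure so that each is also a Lebesgue point of $Du$, and hence $Du_\epsilon(x_k^i) \to Du(x_k^i)$ as $\epsilon \to 0$ for each fixed $k,i$.

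The approximating sequence is then defined by the shifted convex combination
\begin{equation*}
u_m(x) \;:=\; \sum_{i=1}^{n+1} \lambda_i\, u_{\epsilon_m}\bigl(x + x_{k(m)}^{i} - x_0\bigr),
\end{equation*}
where $k(m) \to \infty$ and $\epsilon_m \to 0$ are selected diagonally so that $|Du_{\epsilon_m}(x_{k(m)}^i) - p_i| \to 0$ for every $i$, which is possible since there are only finitely many indices $i$. For $m$ large each summand is smooth on $\overline V$ and satisfies the three bounds in part (a); these bounds transfer to $u_m$ because it is a convex combination (for the Hessian estimate one uses that a convex combination of matrices bounded above by $L_2 I_n$ is again bounded above by $L_2 I_n$). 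Uniform convergence $u_m \to u$ on $\overline V$ follows from the uniform convergence of each shifted summand together with the vanishing of the shifts, and $Du_m(x_0) = \sum_i \lambda_i Du_{\epsilon_m}(x_{k(m)}^i) \to \sum_i \lambda_i p_i = p$. The hard part is precisely the gradient-targeting step; once the Carathéodory-plus-shifts device is in place, the remainder reduces to routine convolution estimates.
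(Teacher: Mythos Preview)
The paper does not actually prove this lemma: it is a survey article and the statement is quoted from \cite{Cannarsa_Yu2009} without proof. So there is no ``paper's own proof'' to compare against, only the original source.

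Your argument is correct and is essentially the construction used in \cite{Cannarsa_Yu2009}: express $p$ via Carath\'eodory as a convex combination of reachable gradients $p_i\in D^*u(x_0)$, pick approximating differentiability points $x_k^i$, and average translated mollifications so that the gradient at $x_0$ picks up the values $Du_{\epsilon_m}(x_{k(m)}^i)$. An equivalent way to phrase your construction is to mollify once against the asymmetric kernel $\rho_m(y)=\sum_i\lambda_i\,\rho_{\epsilon_m}\bigl(y-(x_{k(m)}^i-x_0)\bigr)$; this makes it transparent that all the bounds in (a) are inherited from convolution against a nonnegative kernel of mass one.

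One small simplification: you do not need to invoke Lebesgue points separately. For a semiconcave function, upper semicontinuity of $x\mapsto D^+u(x)$ (Proposition~\ref{basic_facts_of_superdifferential}(b)) implies that if $u$ is differentiable at $x$, then for every $\delta>0$ there is a neighborhood $U$ of $x$ with $D^+u(y)\subset B_\delta(Du(x))$ for all $y\in U$; in particular $|Du(y)-Du(x)|<\delta$ for a.e.\ $y\in U$, so $Du_\epsilon(x)\to Du(x)$ automatically at \emph{every} differentiability point. Hence any sequence $x_k^i$ realizing $p_i\in D^*u(x_0)$ works without further selection. With this observation the diagonalization step is immediate.
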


The following result can be regarded as a refinement of Proposition \ref{pro:AC2}.  

\begin{Pro}[\cite{Cannarsa_Yu2009}]\label{pro:CY}
Suppose $u$ is semiconcave function on $\Omega$ and $H$ is a function of class $C^1$ satisfying (A1) and
\begin{enumerate}[\em (A2')]
	\item for any $x\in\Omega$, $u\in\R^n$ and $c\in\R$, the $c$-level set $\{p\in\R^n: H(x,u,p)=c\}$ contains no straight line.
\end{enumerate}	
Let $x_0\in\SING$ and $p_0=\arg\min_{p\in D^+u(x_0)}H(x_0,u(x_0),p)$. Then, there exists a Lipschitz arc $\mathbf{x}:[0,\tau]\to\Omega$ such that:
\begin{enumerate}[\rm (i)]
	\item $\mathbf{x}$ is a generalized characteristic for $(H,u)$ starting at $x_0$;
	\item $\mathbf{x}(t)\in\SING$ for all $t\in[0,\tau]$;
	\item $\dot{\mathbf{x}}^+(0)$ exists and $\dot{\mathbf{x}}^+(0)=H_p(x_0,u(x_0),p_0)$;
	\item $\lim_{t\to0^+}\operatorname*{ess\ sup}_{s\in[0,t]}|\dot{\mathbf{x}}(s)-\dot{\mathbf{x}}^+(0)|=0$.
\end{enumerate}
\end{Pro}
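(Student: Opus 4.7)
The plan is to combine the approximation Lemma~\ref{lem:approximation} with a classical ODE argument, bypassing the nonsmooth analysis used in the proof of Proposition~\ref{pro:AC2}. First, apply the lemma at $x_0$ with $p=p_0=\arg\min_{p\in D^+u(x_0)}H(x_0,u(x_0),p)$ to obtain a sequence $\{u_m\}\subset C^\infty(\overline V)$ on a neighborhood $V$ of $x_0$, converging uniformly to $u$, with $Du_m(x_0)\to p_0$ and uniform bounds $|u_m|\le L_0$, $|Du_m|\le L_1$, $D^2u_m\le L_2 I_n$. For each $m$, solve by Peano's theorem the classical characteristic
\begin{equation*}
\dot{\mathbf{x}}_m(t)=H_p\bigl(\mathbf{x}_m(t),u_m(\mathbf{x}_m(t)),Du_m(\mathbf{x}_m(t))\bigr),\qquad\mathbf{x}_m(0)=x_0,
\end{equation*}
on a common interval $[0,\tau]$ with values in $V$ and with $|\dot{\mathbf{x}}_m|$ uniformly bounded by some constant $M$.

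By Ascoli--Arzel\`a, pass to a subsequence converging uniformly on $[0,\tau]$ to a Lipschitz arc $\mathbf{x}$ with $\mathbf{x}(0)=x_0$. To verify the differential inclusion (i), note that $\{Du_m(\mathbf{x}_m(\cdot))\}$ is bounded in $L^\infty([0,\tau];\R^n)$ and, after a further subsequence, converges weakly-$\ast$ to some $\mathbf{p}(\cdot)$. Any cluster point of $Du_m(x_m)$ with $x_m\to y$ lies in $D^+u(y)$, as one sees by passing to the limit in the semiconcavity inequality~\eqref{criterion_for_lin_semiconcave} (cf.\ Proposition~\ref{basic_facts_of_superdifferential}(b)). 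Mazur's lemma combined with the continuity of $H_p$ then yields
\begin{equation*}
\dot{\mathbf{x}}(t)\in\mathrm{co}\,H_p\bigl(\mathbf{x}(t),u(\mathbf{x}(t)),D^+u(\mathbf{x}(t))\bigr)\qquad\text{for a.e.\ }t\in[0,\tau].
\end{equation*}
Claim (ii) then follows by the same argument as in the proof of Proposition~\ref{pro:AC2}, invoking (A2') and the extremality of $p_0$ to exclude that $\mathbf{x}(t)$ visit a regular point of $u$.

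For (iii), continuity of $H_p$ gives $\dot{\mathbf{x}}_m(0)\to H_p(x_0,u(x_0),p_0)$, and passing to the limit in the integral form of the ODE yields $\dot{\mathbf{x}}^+(0)=H_p(x_0,u(x_0),p_0)$. The principal obstacle is (iv), which demands essential uniform closeness of $\dot{\mathbf{x}}(s)$ to $\dot{\mathbf{x}}^+(0)$ on a whole interval $[0,t]$ rather than at a single point. The strategy is to use the one-sided bound $D^2u_m\le L_2 I_n$ together with the extremality of $p_0$: for $s$ small, any excursion of $Du_m(\mathbf{x}_m(s))$ away from $p_0$ in a direction $q$ would, via Proposition~\ref{criterion-Du_semiconcave}, produce a vector $p'\in D^+u(x_0)$ with $H(x_0,u(x_0),p')<H(x_0,u(x_0),p_0)$, contradicting the minimality of $p_0$. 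Quantifying this excursion in terms of $s$ uniformly in $m$, and transferring the bound to the weak-$\ast$ limit, is the delicate part of the proof.
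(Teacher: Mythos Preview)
Your overall route---smooth $u$ via Lemma~\ref{lem:approximation} with $Du_m(x_0)\to p_0$, solve the classical characteristic $\dot{\mathbf{x}}_m=H_p(\mathbf{x}_m,u_m(\mathbf{x}_m),Du_m(\mathbf{x}_m))$, extract a uniform limit by Ascoli--Arzel\`a, and recover the inclusion~\eqref{intro:gc} via Mazur---is exactly the construction sketched in the paper (following \cite{Yu2006,Cannarsa_Yu2009}); part~(i) goes through as you describe.

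The gap is in your treatment of (iii)--(iv). First, (iii) does not follow from the pointwise convergence $\dot{\mathbf{x}}_m(0)\to H_p(x_0,u(x_0),p_0)$ alone, since the limits in $m$ and in $s\to 0^+$ need not commute; (iii) is in fact a corollary of (iv). Second, the mechanism you sketch for (iv)---that an excursion of $Du_m(\mathbf{x}_m(s))$ away from $p_0$ would, via Proposition~\ref{criterion-Du_semiconcave}, produce some $p'\in D^+u(x_0)$ with \emph{smaller} energy than $p_0$---points in the wrong direction and does not yield a contradiction. The actual argument is an energy inequality along the smooth flow: setting $h_m(s)=H\bigl(\mathbf{x}_m(s),u_m(\mathbf{x}_m(s)),Du_m(\mathbf{x}_m(s))\bigr)$ and differentiating, one uses $\dot{\mathbf{x}}_m=H_p$ together with the one-sided bound $D^2u_m\leqslant L_2I_n$ to obtain $\langle D^2u_m\,H_p,H_p\rangle\leqslant L_2|H_p|^2$ and hence $h_m'(s)\leqslant C$, so that $h_m(s)\leqslant h_m(0)+Cs$ with $h_m(0)\to H(x_0,u(x_0),p_0)$. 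Since every cluster point of $Du_m(\mathbf{x}_m(s))$ as $m\to\infty$, $s\to 0^+$ lies in $D^+u(x_0)$ (upper semicontinuity) and, by the above, has energy at most $H(x_0,u(x_0),p_0)$, uniqueness of the minimizer $p_0$ (here (A1) and (A2') enter) forces $Du_m(\mathbf{x}_m(s))\to p_0$ uniformly in $m$ as $s\to 0^+$; continuity of $H_p$ then yields (iv). The same near-minimal-energy control underlies (ii) for small $\tau$, so the bare reference to ``the same argument as in Proposition~\ref{pro:AC2}''---which relies on \eqref{eq:noncritical_weak} and a different construction---is also insufficient.
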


\begin{Rem}
For what follows we need further details related to Proposition \ref{pro:CY}.
\begin{enumerate}[--]
	\item The semiconcave function $u$ is not required to be a solution of \eqref{eq:HJ_s_locl}. But, if $\Omega$ is bounded, being Lipschitz, $u$ must be a subsolution of \eqref{eq:HJ_s_locl} with Hamiltonian $H-c$ for some $c\in\R$.
	\item Observe that, in general, a generalized characteristic may well be a constant arc. But, for solutions of \eqref{eq:HJ_s_locl},  it was proved in \cite{Albano_Cannarsa2002} that singularities propagate along genuine shocks (injective generalized characteristics) under assumption \eqref{eq:noncritical_weak}. If $u$ is a solution of \eqref{eq:HJ_s_locl}, as a corollary, one can show that the generalized characteristics in Proposition \ref{pro:CY} propagates singularities under the more natural condition 
	\begin{align*}
		0\not\in H_p(x_0,u(x_0),D^+u(x_0)).
	\end{align*}
	\item For the generalized characteristic $\mathbf{x}$, constructed in Proposition \ref{pro:CY}, the right-continuity of $\dot{\mathbf{x}}$ at $0$ is important for further applications. Later, we will call a singular generalized characteristic satisfying properties (i)-(iv) in Proposition \ref{pro:CY} a \emph{singular characteristic}.
\end{enumerate}
\end{Rem}

Owing to Lemma \ref{lem:approximation}, there is a sequence of smooth functions $\{u_m\}$ enjoying properties (a) and (b) in the lemma for $p=p_0$. It is easy to see that, for every $m\geqslant1$, the Cauchy problem
\begin{align*}
	\begin{cases}
		\dot{x}=H_p(x,u_m(x),Du_m(x)),&\\
		x(0)=x_0,
	\end{cases}
\end{align*}
has a $C^1$ solution $x_m:[0,\tau]\to\Omega$. Without loss of generality, we can assume that $x_m$ uniformly converges to $\mathbf{x}$ on $[0,\tau]$ as $m\to\infty$. A standard argument (see, for instance, \cite{Yu2006}) shows that $\mathbf{x}$ is a generalized characteristic for $(H,u)$ starting at $x_0$.

\subsection{Strict singular characteristics}

The r\^ole of the convex hull in the definition of generalized characteristic is quite mysterious. This is a big obstacle for us to  reveal more information about the propagation of singularities and related Hamiltonian dynamics. The next notion gets rid of such a convexity operator.

\begin{defn}\label{defn:ssc}
A Lipschitz curve $\mathbf{x}:[0,\tau]\to\Omega$ is called a \textit{strict singular characteristic} for $(H,u)$ starting at $x_0\in\SING$ if:
\begin{itemize}
\item[(i)]  denoting by 
$p(t)$ the {\em minimal energy selection} of $D^+u(\mathbf{x}(t))$, i.e.,
$$p(t)=\arg\min_{p\in D^+u(\mathbf{x}(t))}H(\mathbf{x}(t),u(\mathbf{x}(t)),p)\qquad(t\in[0,\tau]),$$
$\mathbf{x}$ satisfies
\begin{equation}\label{eq:ssc}
	\begin{split}
		\begin{cases}
		\dot{\mathbf{x}}(t)=H_p(\mathbf{x}(t),u(\mathbf{x}(t)),p(t)),& a.e.\ t\in[0,\tau],\\
		\mathbf{x}(0)=x_0\,;&
	\end{cases}
	\end{split}
\end{equation}
\item[(ii)] $\mathbf{x}(t)\in\SING$ for all $t\in[0,\tau]$;
	\item[(iii)] $\dot{\mathbf{x}}^+(0)$ exists and $\dot{\mathbf{x}}^+(0)=H_p(x_0,u(x_0),p(0))$;
	\item[(iv)] $\lim_{t\to0^+}\operatorname*{ess\ sup}_{s\in[0,t]}|\dot{\mathbf{x}}(s)-\dot{\mathbf{x}}^+(0)|=0$.
\end{itemize}

\end{defn}

The existence of strict singular characteristics for equation \eqref{eq:intro_KS} was proved in \cite{Khanin_Sobolevski2016} (see also the appendix of \cite{Cannarsa_Cheng2020}), where additional regularity properties of such curves were established, including the right-differentiability of $\mathbf{x}$ for every $t$. However, the intrinsic nature of the strict singular characteristics is still unclear. One of the most important issues of the theory is to establish the uniqueness of solutions to \eqref{eq:ssc}. We describe below a partial answer to such a fundamental problem, following the paper \cite{Cannarsa_Cheng2020}.

Hereafter in this section we assume $n=2$. Given a  semiconcave  solution $u$ of
\begin{equation}\label{eq:intro_HJ_local}\tag{HJ$_{\rm loc}$}
	H(x,Du(x))=0,\quad x\in \Omega,
\end{equation}
we denote by $\mbox{\rm Lip}^u_0(0,T;\Omega)$ the set of Lipschitz arcs $\mathbf{x}$ satisfying properties (ii), (iii), and (iv) of Definition~\ref{defn:ssc}  for all $t\in[0,T]$.

\begin{The}[\cite{Cannarsa_Cheng2020}]\label{thm:reparametrization}
Let $u$ be a  semiconcave  solution of \eqref{eq:intro_HJ_local} and let  $x_0\in \SING$ be such that $0\not\in H_p(x_0,D^+u(x_0))$. Let $\mathbf{x}_j\in\mbox{\rm Lip}_0^u(0,T;\Omega)$  ($j=1,2$)  be such that $\mathbf{x}_j(0)=x_0$.
Then, there exists $\sigma\in(0,T]$ such that there exists a unique  bi-Lipschitz homeomorphism $$\phi:[0,\sigma]\to[0,\phi(\sigma)]\subset [0,T]$$ satisfying  $\mathbf{x}_1(s)=\mathbf{x}_2(\phi(s))$ for all $s\in[0,\sigma]$. 
\end{The}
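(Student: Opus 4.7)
The plan is to exploit two features peculiar to this setting: (a) the noncriticality $0\not\in H_p(x_0,D^+u(x_0))$, together with the upper semicontinuity of $D^+u$ and the continuity of $H_p$, gives a lower bound $|\dot{\mathbf{x}}_j(t)|\geqslant c>0$ a.e.\ on a small initial interval, so each $\mathbf{x}_j$ is bi-Lipschitz onto its image and the two curves share the common right-tangent $v_0:=H_p(x_0,u(x_0),p(0))$; (b) in dimension $n=2$ the singular set $\SING$ is countably $\mathcal{H}^1$-rectifiable (Proposition~\ref{eq:rect1}), and this one-dimensional structure together with non-criticality at $x_0$ pins down a unique trajectory leaving $x_0$ in the direction $v_0$.

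First I would fix coordinates: rotate so that $v_0$ points along $e_1$; by property (iv) of Definition~\ref{defn:ssc}, after restricting to an interval $[0,\tau']$ that is sufficiently small,
\begin{equation*}
\langle\dot{\mathbf{x}}_j(t),e_1\rangle\geqslant c/2\qquad\text{for a.e.\ }t\in[0,\tau'],\ j=1,2.
\end{equation*}
Consequently $\pi_1\circ\mathbf{x}_j$ is a strictly increasing bi-Lipschitz map from $[0,\tau']$ onto some interval $[0,\alpha_j]$, and each image $\mathbf{x}_j([0,\tau'])$ can be written as a Lipschitz graph $\{(x,g_j(x)):x\in[0,\alpha_j]\}$ over the $e_1$-axis, with $g_j(0)=0$ and $g_j'(0^+)=0$.

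The central step is to prove $g_1\equiv g_2$ on a common small interval $[0,\alpha]$. The key observation is that the velocity of a strict singular characteristic is a \emph{pointwise} function of position: setting $F(y):=H_p(y,u(y),p(y))$ with $p(y):=\arg\min_{p\in D^+u(y)}H(y,u(y),p)$, one has $\dot{\mathbf{x}}_j(t)=F(\mathbf{x}_j(t))$ for a.e.\ $t$, so wherever the two images intersect they share the same tangent direction $F(y)/|F(y)|$. Combining this with the 2D rectifiability of $\SING$ near $x_0$, the common initial tangent $v_0$, and property (iv), both graphs are forced to select the same outgoing branch of $\SING$ at $x_0$ and must therefore coincide on $[0,\alpha]$. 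Once this is known, $\phi:=(\pi_1\circ\mathbf{x}_2)^{-1}\circ(\pi_1\circ\mathbf{x}_1)$ is a composition of bi-Lipschitz homeomorphisms, the identity $\mathbf{x}_1(s)=\mathbf{x}_2(\phi(s))$ holds by construction on $[0,\sigma]$ with $\sigma:=(\pi_1\circ\mathbf{x}_1)^{-1}(\alpha)$, and uniqueness follows from the injectivity of $\mathbf{x}_2$.

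The main obstacle is this central step: promoting countable $\mathcal{H}^1$-rectifiability to local inclusion of $\SING$, near $x_0$ and along the direction $v_0$, in a \emph{single} Lipschitz arc. This is where the assumption $n=2$ is genuinely used and where the delicate analysis of \cite{Cannarsa_Cheng2020} enters: a hypothetical second branch of $\SING$ leaving $x_0$ with the same initial tangent $v_0$ would contradict the minimal-energy selection underlying Definition~\ref{defn:ssc}(i)--(iv), but making this rigorous requires a careful local study of how $Du$ jumps across $\SING$ and of the classical characteristics impinging on $x_0$.
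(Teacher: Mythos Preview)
This paper is a survey and does not prove Theorem~\ref{thm:reparametrization}; the result is quoted from \cite{Cannarsa_Cheng2020}, so there is no in-paper proof to compare your attempt against. I can only assess your proposal on its own merits.

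Your overall architecture is sound and matches what one would expect: non-criticality together with the upper semicontinuity of $D^+u$ does give a uniform lower speed bound near $t=0$; the definition of $\mbox{\rm Lip}^u_0$ (properties (ii)--(iv) at \emph{every} $t$) forces $\dot{\mathbf{x}}_j^+(t)=F(\mathbf{x}_j(t))$ with $F(y)=H_p(y,u(y),p(y))$, hence $\dot{\mathbf{x}}_j=F\circ\mathbf{x}_j$ a.e.; projection onto the $v_0$-axis yields bi-Lipschitz graph parametrizations $g_1,g_2$; and once the images are shown to coincide, $\phi=(\pi_1\circ\mathbf{x}_2)^{-1}\circ(\pi_1\circ\mathbf{x}_1)$ is the required bi-Lipschitz homeomorphism, unique by injectivity of $\mathbf{x}_2$.

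The gap is the one you yourself flag, and it is genuine. The identity $\dot{\mathbf{x}}_j=F\circ\mathbf{x}_j$ a.e.\ yields no uniqueness, because $F$ is not continuous (the argmin $p(\cdot)$ jumps). Your proposed remedy---that countable $\mathcal{H}^1$-rectifiability of $\SING$ in $\R^2$ forces a unique ``outgoing branch'' at $x_0$ tangent to $v_0$---does not work as stated: countable rectifiability is a measure-theoretic statement and says nothing about local topology. A priori $\SING$ could contain infinitely many Lipschitz arcs through $x_0$ all tangent to $v_0$, or a far wilder set, and Proposition~\ref{eq:rect1} would still hold. What is actually needed is a direct comparison of $g_1$ and $g_2$ that uses the semiconcavity of $u$ and the specific structure of $D^+u$ along a planar singular arc: at points of magnitude one, $D^+u$ is a segment whose endpoints are reachable gradients on the zero level set $\{H=0\}$, and this segment---not the ambient rectifiable set---determines the direction $H_p(\cdot,p(\cdot))$. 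That is where the restriction $n=2$ is genuinely exploited in \cite{Cannarsa_Cheng2020}, rather than through Proposition~\ref{eq:rect1}. As written, your proposal is a correct outline of the reduction to ``$g_1=g_2$'' but stops short of supplying the mechanism that proves it.
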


\begin{Cor}[\cite{Cannarsa_Cheng2020}]\label{Cor:reparamatrization}
Let $\mathbf{x}$ be a strict singular characteristic starting from $x_0$ and let $\mathbf{y}$ be any singular characteristic as in Proposition \ref{pro:AC2}. If $0\not\in H_p(x_0,D^+u(x_0))$, then there exists $\sigma>0$ and a bi-Lipschitz homeomorphism $\phi:[0,\sigma]\to[0,\phi(\sigma)]$ such that 
$$\mathbf{y}(\phi(s))=\mathbf{x}(s)\qquad\forall s\in[0,\sigma].$$
\end{Cor}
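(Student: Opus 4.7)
The plan is to observe that Corollary~\ref{Cor:reparamatrization} falls out essentially for free from Theorem~\ref{thm:reparametrization}: the only real work is to check that both the strict singular characteristic $\mathbf{x}$ and the singular characteristic $\mathbf{y}$ supplied by Proposition~\ref{pro:CY} lie in the class $\mbox{\rm Lip}^u_0(0,T;\Omega)$ and share the initial point $x_0$, after which Theorem~\ref{thm:reparametrization} applies verbatim.

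First I would unwind Definition~\ref{defn:ssc}: by definition, a strict singular characteristic satisfies (i)--(iv), and conditions (ii), (iii), (iv) are exactly the membership conditions for $\mbox{\rm Lip}^u_0(0,T;\Omega)$; hence $\mathbf{x}\in\mbox{\rm Lip}^u_0(0,T_1;\Omega)$ for some $T_1>0$, with $\mathbf{x}(0)=x_0$. Next I would perform the analogous verification for $\mathbf{y}$: conclusions (ii), (iii), (iv) of Proposition~\ref{pro:CY} match word-for-word the conditions (ii), (iii), (iv) of Definition~\ref{defn:ssc} (singularity propagation, existence of $\dot{\mathbf{y}}^+(0)=H_p(x_0,u(x_0),p_0)$ for $p_0=\arg\min_{p\in D^+u(x_0)}H(x_0,u(x_0),p)$, and $L^\infty$ right-continuity of $\dot{\mathbf{y}}$ at $0$), so that $\mathbf{y}\in\mbox{\rm Lip}^u_0(0,T_2;\Omega)$ with $\mathbf{y}(0)=x_0$. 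Setting $T:=\min\{T_1,T_2\}>0$ places both curves in the common space $\mbox{\rm Lip}^u_0(0,T;\Omega)$ on a shared interval.

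Finally, the non-criticality hypothesis $0\not\in H_p(x_0,D^+u(x_0))$ coincides with the assumption of Theorem~\ref{thm:reparametrization}, so I would apply that theorem with $\mathbf{x}_1=\mathbf{x}$ and $\mathbf{x}_2=\mathbf{y}$, obtaining $\sigma\in(0,T]$ and a unique bi-Lipschitz homeomorphism $\phi:[0,\sigma]\to[0,\phi(\sigma)]\subset[0,T]$ with $\mathbf{x}(s)=\mathbf{y}(\phi(s))$ for every $s\in[0,\sigma]$, which is precisely the conclusion of the corollary. There is no substantive obstacle here; the only point deserving attention is that the positivity of $\sigma$ is built into Theorem~\ref{thm:reparametrization}, and the comparability of the two arcs near $0$ is encoded in condition (iii) of $\mbox{\rm Lip}^u_0$, which forces both right-derivatives to equal the common vector $H_p(x_0,u(x_0),p_0)$ without any additional argument.
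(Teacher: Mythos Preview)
Your argument is correct and is exactly the intended one: the corollary is an immediate consequence of Theorem~\ref{thm:reparametrization} once both curves are recognized as elements of $\mbox{\rm Lip}^u_0(0,T;\Omega)$. The survey does not supply its own proof of the corollary (it is quoted from \cite{Cannarsa_Cheng2020}), but your derivation matches how the result is meant to follow from the theorem.

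One minor remark: the statement of the corollary points to Proposition~\ref{pro:AC2}, whose listed conclusions (1)--(4) do not include the $L^\infty$ right-continuity condition (iv); you correctly invoke Proposition~\ref{pro:CY} instead, whose conclusion (iv) is precisely what is needed for membership in $\mbox{\rm Lip}^u_0$. This is consistent with the paper's own terminology, since the Remark following Proposition~\ref{pro:CY} defines a \emph{singular characteristic} to be one satisfying (i)--(iv) of that proposition. So your substitution is not a deviation but the correct reading of the phrase ``any singular characteristic''.
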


For strict singular characteristics, uniqueness holds without reparameterization.

\begin{The}[\cite{Cannarsa_Cheng2020}]\label{the:strict}
Let $u$ be a  semiconcave  solution of \eqref{eq:intro_HJ_local} and let  $x_0\in \SING$ be such that $0\not\in H_p(x_0,D^+u(x_0))$. Let $\mathbf{x}_j:[0,T]\to\Omega$ ($j=1,2$) be  strict singular characteristics with initial point $x_0$. Then there exists $\tau\in(0, T]$ such that $\mathbf{x}_1(t)=\mathbf{x}_2(t)$ for all $t\in[0,\tau]$.
\end{The}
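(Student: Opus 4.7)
My plan is to deduce Theorem~\ref{the:strict} from the reparameterization result of Theorem~\ref{thm:reparametrization} by showing that, for two strict singular characteristics, the reparameterizing homeomorphism $\phi$ is forced to be the identity on a small initial interval. Concretely, Theorem~\ref{thm:reparametrization} supplies $\sigma\in(0,T]$ and a bi-Lipschitz homeomorphism $\phi:[0,\sigma]\to[0,\phi(\sigma)]$ with $\mathbf{x}_1(s)=\mathbf{x}_2(\phi(s))$. Since $\phi$ is bi-Lipschitz, $\dot\phi$ exists a.e.\ and is bounded between two positive constants, so the chain rule for absolutely continuous functions gives
\begin{equation*}
\dot{\mathbf{x}}_1(s)=\dot\phi(s)\,\dot{\mathbf{x}}_2(\phi(s))\qquad\text{for a.e.\ }s\in[0,\sigma].
\end{equation*}

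The decisive observation is that the minimal energy selection in Definition~\ref{defn:ssc} is \emph{intrinsic to the point}: for each $y\in\SING$ the vector $p(y):=\arg\min_{q\in D^+u(y)}H(y,q)$ depends only on $y$, its uniqueness being a consequence of the (uniform) quasi-convexity of $H(y,\cdot)$ combined with the convexity of $D^+u(y)$. Consequently, at every $s\in[0,\sigma]$ the vectors $p_1(s)$ and $p_2(\phi(s))$ that drive \eqref{eq:ssc} along the two curves both coincide with $p(\mathbf{x}_1(s))$. Substituting the strict singular characteristic ODE for each $\mathbf{x}_j$ into the chain-rule relation above therefore yields
\begin{equation*}
\dot{\mathbf{x}}_1(s)=\dot\phi(s)\,\dot{\mathbf{x}}_1(s)\qquad\text{for a.e.\ }s\in[0,\sigma],
\end{equation*}
so that $\dot\phi(s)=1$ at a.e.\ $s$ where $\dot{\mathbf{x}}_1(s)\neq 0$.

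To conclude, I need $\dot{\mathbf{x}}_1\neq 0$ almost everywhere on some subinterval $[0,\tau]\subset[0,\sigma]$. The noncritical assumption $0\notin H_p(x_0,D^+u(x_0))$ together with Definition~\ref{defn:ssc}(iii) gives $\dot{\mathbf{x}}_1^+(0)=H_p(x_0,p(0))\neq 0$, and the essential right-continuity in Definition~\ref{defn:ssc}(iv) then forces $|\dot{\mathbf{x}}_1(s)|\geqslant\tfrac12|\dot{\mathbf{x}}_1^+(0)|$ for a.e.\ $s$ in a small right-neighborhood $[0,\tau]$ of the origin. On that interval $\dot\phi\equiv 1$ a.e., so integrating from $\phi(0)=0$ yields $\phi(s)=s$, whence $\mathbf{x}_1(s)=\mathbf{x}_2(s)$ for all $s\in[0,\tau]$.

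The main obstacle I anticipate is ensuring that the minimal energy selection is genuinely single-valued and that the identification $p_1(s)=p_2(\phi(s))$ is valid at a.e.\ $s$, which relies on the quasi-convexity hypothesis (A2) or (A2$'$) applied on the convex set $D^+u(y)$. Once this intrinsic pointwise identification is secured, the remaining ingredients — the chain rule for bi-Lipschitz maps, the non-degeneracy of $\dot{\mathbf{x}}_1$ near $0$, and integration of $\dot\phi=1$ — are straightforward and complete the proof.
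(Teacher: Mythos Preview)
The survey paper does not supply its own proof of Theorem~\ref{the:strict}; the statement is recorded with a citation to \cite{Cannarsa_Cheng2020}, and no argument is given here. Your derivation is correct and is precisely the natural way to obtain the theorem once Theorem~\ref{thm:reparametrization} is available: the bi-Lipschitz reparameterization $\phi$ furnished by that theorem, combined with the fact that the minimal energy selection $p(y)=\arg\min_{q\in D^+u(y)}H(y,q)$ depends only on the point $y$ (uniqueness of the minimizer being already built into Definition~\ref{defn:ssc}, and guaranteed by the strict/quasi-convexity of $H(y,\cdot)$ on the compact convex set $D^+u(y)$), forces $\dot{\mathbf{x}}_1(s)=\dot\phi(s)\,\dot{\mathbf{x}}_1(s)$ a.e.; the noncriticality hypothesis and condition~(iv) then give $\dot{\mathbf{x}}_1\neq 0$ a.e.\ near $0$, whence $\dot\phi\equiv 1$ and $\phi=\mathrm{id}$ on $[0,\tau]$. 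The chain-rule step is legitimate because a bi-Lipschitz increasing map sends null sets to null sets, so the a.e.\ identities for $\dot{\mathbf{x}}_2$ transfer to a.e.\ identities in the variable $s$.
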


Theorem~\ref{thm:reparametrization} and Theorem~\ref{the:strict} establish a connection between the absence of critical points and uniqueness of strict singular characteristics. In this direction, we also have the following global result.

\begin{Cor}[\cite{Cannarsa_Cheng2020}]\label{cor:strict}
Let $u$ be a  semiconcave  solution of \eqref{eq:intro_HJ_local} and let  $x_0\in \SING$. Let $\mathbf{x}_j:[0,T]\to\Omega$ ($j=1,2$) be  strict singular characteristics with initial point $x_0$ such that $0\not\in H_p(\mathbf{x}_j(t),D^+u(\mathbf{x}_j(t)))$ for all $t\in[0,T]$. Then $\mathbf{x}_1(t)=\mathbf{x}_2(t)$ for all $t\in[0,T]$.
\end{Cor}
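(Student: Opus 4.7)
The plan is a continuation/maximality argument that promotes the local Theorem~\ref{the:strict} to a global statement. Define
\begin{equation*}
    E=\{\,t\in[0,T]:\mathbf{x}_1(s)=\mathbf{x}_2(s)\text{ for every }s\in[0,t]\,\},\qquad\tau^{*}=\sup E.
\end{equation*}
Since $0\notin H_p(x_0,D^+u(x_0))$ by the hypothesis at $t=0$, Theorem~\ref{the:strict} guarantees that $E$ contains some $[0,\tau]$ with $\tau>0$, so $E\neq\varnothing$. Continuity of the two Lipschitz curves shows $\tau^{*}\in E$ immediately (if $\tau^{*}$ is approached by a sequence $t_n\in E$, then for any $s<\tau^{*}$ we have $s<t_n$ eventually, forcing $\mathbf{x}_1(s)=\mathbf{x}_2(s)$, and equality at $s=\tau^{*}$ follows by passing to the limit). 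The problem therefore reduces to proving $\tau^{*}=T$.

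Assume, for contradiction, $\tau^{*}<T$ and set $x^{*}=\mathbf{x}_1(\tau^{*})=\mathbf{x}_2(\tau^{*})$. Property (ii) of Definition~\ref{defn:ssc} gives $x^{*}\in\SING$, and the global non-critical hypothesis yields $0\notin H_p(x^{*},D^+u(x^{*}))$. I would then time-shift and consider
\begin{equation*}
    \mathbf{y}_j(s)=\mathbf{x}_j(\tau^{*}+s),\qquad s\in[0,T-\tau^{*}],\ j=1,2,
\end{equation*}
with the intention of applying Theorem~\ref{the:strict} to the pair $(\mathbf{y}_1,\mathbf{y}_2)$ starting from $x^{*}$. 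If this works, we would obtain $\tau'>0$ with $\mathbf{y}_1\equiv\mathbf{y}_2$ on $[0,\tau']$, hence $\mathbf{x}_1\equiv\mathbf{x}_2$ on $[0,\tau^{*}+\tau']$, contradicting the maximality of $\tau^{*}$.

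The hard part is verifying that each shifted curve $\mathbf{y}_j$ is itself a strict singular characteristic in the sense of Definition~\ref{defn:ssc}, i.e.\ that the initial-time conditions (iii) and (iv) hold at the new starting time $s=0$. Conditions (i) and (ii) are inherited trivially from the restriction, and the minimal-energy selection for $\mathbf{y}_j$ at $s=0$ coincides with that for $\mathbf{x}_j$ at $t=\tau^{*}$ since both are determined pointwise from $u$ and the curve's position. For (iii) and (iv) one cannot in general shift an a.e.\ identity and recover a pointwise right-derivative with its essential-sup right-continuity; the rescue is the sharper regularity theorem established in \cite{Khanin_Sobolevski2016} and recalled in the paragraph preceding Theorem~\ref{thm:reparametrization}, which asserts that at every $t$ along a strict singular characteristic $\mathbf{x}_j$ the right-derivative $\dot{\mathbf{x}}_j^{+}(t)$ exists, equals $H_p(\mathbf{x}_j(t),u(\mathbf{x}_j(t)),p_j(t))$ for the minimal-energy selection $p_j$, and satisfies the essential-sup right-continuity required by (iv). Applying those facts at $t=\tau^{*}$ promotes $\mathbf{y}_j$ to a bona fide strict singular characteristic from $x^{*}$, after which Theorem~\ref{the:strict} supplies the contradiction and the argument closes.
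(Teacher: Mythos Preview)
Your argument is correct and is precisely the standard continuation (open--closed) argument one uses to upgrade the local uniqueness of Theorem~\ref{the:strict} to a global statement; the survey does not reproduce a proof of this corollary (it is quoted from \cite{Cannarsa_Cheng2020}), but this is the expected route. You also correctly isolate the only nontrivial step---checking that the time-shifted curves $\mathbf{y}_j(\cdot)=\mathbf{x}_j(\tau^{*}+\cdot)$ again satisfy items (iii) and (iv) of Definition~\ref{defn:ssc} at their new initial time---and resolve it via the right-differentiability of strict singular characteristics at \emph{every} $t$ recorded in the paragraph before Theorem~\ref{thm:reparametrization}. One small caution: that regularity is established in \cite{Khanin_Sobolevski2016} and the appendix of \cite{Cannarsa_Cheng2020} for strict singular characteristics as constructed there, so strictly speaking you are importing a structural fact about such curves rather than deducing it from Definition~\ref{defn:ssc} alone; this is exactly what the source paper does, so the citation is appropriate, but it is worth being explicit that (iii)--(iv) at $\tau^{*}$ come from an external regularity theorem and not from the bare definition.
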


\section{Global propagation of singularities}

In this section, we will discuss the global behavior of the propagation of singularities along generalized characteristics. 

\subsection{Propagating structure of the $C^1$ singular support}\label{sec:support}

A typical problem is the following evolutionary Hamilton-Jacobi equation
\begin{equation}\label{eq:HH_evo_1}
	\begin{cases}
		D_tu(t,x)+H(t,x,D_xu(t,x))=0,\quad (t,x)\in(0,T)\times\R^n\\
		u(0,x)=u_0(x),\quad x\in\R^n.
	\end{cases}
\end{equation}
A different, but related, problem is the study of the propagation of the closure of the singular set of $u$, i.e, the $C^1$ singular support of $u$.

\begin{defn}
Let $u$ be a viscosity solution of \eqref{eq:HH_evo_1}. We say that $(t,x)$ is not in the $C^1$ singular support of $u$, denoted by $(t,x)\not\in\text{sing supp}_{C^1}\,(u)$, if there exists a neighborhood $V\subset(0,T)\times\R^n$ of $(t_0,x_0)$, such that $u\in C^1(V)$. In other words, $\text{sing supp}_{C^1}\,(u)$ is the complement of the largest open set on which $u$ is of class $C^1$.
\end{defn}

Consider the system of generalized characteristics with respect to \eqref{eq:HH_evo_1}, that is,
\begin{equation}\label{eq:gc_2}
	\begin{cases}
		\dot{\mathbf{x}}(t)\in\text{co}\,H_p(t,\mathbf{x}(t),D^+u(\mathbf{x}(t))),&\quad t\in[t_0,T)\\
		\mathbf{x}(t_0)=x_0.&
	\end{cases}
\end{equation}

\begin{Pro}[\cite{Albano2014_1}]\label{pro:A2014}
Suppose $L(t,x,v)$ is a Tonelli Lagrangian with the associated Hamiltonian $H(t,x,p)$ and $u_0$ is continuous. Let $(t_0,x_0)\in\mbox{\rm sing supp}_{C^1}\,(u)$, then 
$$(t,\mathbf{x}(t))\in\mbox{\rm sing supp}_{C^1}\,(u)\qquad\forall t\in[t_0,T)\,,
$$ where $\mathbf{x}$ is solution of \eqref{eq:gc_2}.
\end{Pro}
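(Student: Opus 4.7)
The plan is to argue by contradiction. Suppose there exists $t_1 \in (t_0, T)$ with $(t_1, \mathbf{x}(t_1)) \notin \mbox{\rm sing supp}_{C^1}\,(u)$, and define
\[
s_* := \sup\{s \in [t_0, t_1] : (s, \mathbf{x}(s)) \in \mbox{\rm sing supp}_{C^1}\,(u)\}.
\]
Since the $C^1$ singular support is closed (being the complement of the maximal open set of $C^1$-regularity) and contains $(t_0, x_0)$ but not $(t_1, \mathbf{x}(t_1))$, we have $s_* \in [t_0, t_1)$ and $(s_*, \mathbf{x}(s_*))$ still lies in the $C^1$ singular support. The goal is then to contradict the membership of $(s_*, \mathbf{x}(s_*))$ in this set.

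On the half-open interval $(s_*, t_1]$, by construction $u$ is $C^1$ in a neighborhood of each $(s, \mathbf{x}(s))$, so $D^+u(s, \mathbf{x}(s))$ is the singleton $\{D_xu(s, \mathbf{x}(s))\}$ and the convex hull in the inclusion \eqref{eq:gc_2} collapses. Setting $\mathbf{p}(s) := D_xu(s, \mathbf{x}(s))$, the curve thus solves the classical Hamilton ODE
\[
\dot{\mathbf{x}}(s) = H_p\bigl(s, \mathbf{x}(s), \mathbf{p}(s)\bigr), \qquad \dot{\mathbf{p}}(s) = -H_x\bigl(s, \mathbf{x}(s), \mathbf{p}(s)\bigr),
\]
the second equation coming from differentiating the HJ identity satisfied by the classical solution $u$ along $\mathbf{x}$. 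By standard ODE theory the pair $(\mathbf{x}, \mathbf{p})$ extends continuously to the closed left endpoint, yielding a limit $p_* := \lim_{s \to s_*^+}\mathbf{p}(s)$, and the upper semicontinuity of $D^+u$ (Proposition~\ref{basic_facts_of_superdifferential}(b)) places $p_* \in D^+u(s_*, \mathbf{x}(s_*))$.

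The main obstacle is to upgrade this membership to the conclusion that $D^+u(s_*, \mathbf{x}(s_*)) = \{p_*\}$, which by Proposition~\ref{basic_facts_of_superdifferential}(c) would force $u$ to be differentiable (hence $C^1$ in a neighborhood, by semiconcavity) at $(s_*, \mathbf{x}(s_*))$, contradicting membership in the $C^1$ singular support. The route I would take exploits the Tonelli representation of $u$: integrate the Hamilton flow backward from $(s_*, \mathbf{x}(s_*), p_*)$ down to $t = 0$ to obtain an extremal $\xi$ on $[0, s_*]$; concatenated with $\mathbf{x}|_{[s_*, t_1]}$, this is a critical point of the action functional ending at the $C^1$ point $(t_1, \mathbf{x}(t_1))$, where minimizers are unique, so it must be \emph{the} minimizer through $(t_1, \mathbf{x}(t_1))$. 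A flow-box argument around this extremal then shows that the Hamilton flow from $t = 0$ is a diffeomorphism over a neighborhood of $(s_*, \mathbf{x}(s_*))$, so that $u$ agrees on that neighborhood with the smooth value function built from classical characteristics. The delicate part of the plan is precisely checking that the backward extension of the flow does not encounter a conjugate point or a secondary minimizer before reaching $t = 0$; this is where the fact that $\mathbf{x}$ already lies in the regular $C^1$ region for $s > s_*$ is essential, since it forces any competing extremal to merge with the unique one at $(t_1, \mathbf{x}(t_1))$ and hence to coincide with $\xi$.
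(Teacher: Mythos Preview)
Your overall strategy---argue by contradiction, identify the generalized characteristic with a classical characteristic on the $C^1$ region, and then propagate $C^1$ regularity backward along that characteristic---is the same as the paper's. The organization differs, though, and the paper's decomposition is cleaner.

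The paper first isolates as a lemma the statement that \emph{if $(t,x)\notin\mbox{\rm sing supp}_{C^1}(u)$, then the unique optimal curve $\xi$ ending at $(t,x)$ satisfies $(s,\xi(s))\notin\mbox{\rm sing supp}_{C^1}(u)$ for every $s\in(0,t]$}. (This is the nontrivial step when $u_0$ is merely continuous; it extends classical results that assume $u_0\in C^2$.) This immediately yields a tubular neighborhood $V$ of $\{(s,\xi(s)):s\in[t_0,t]\}$ on which $u$ is $C^1$. On $V$, both $\mathbf{x}$ and $\xi$ solve the same ODE $\dot y=H_p(s,y,D_xu(s,y))$ with the same terminal condition at $t$, so backward uniqueness gives $\mathbf{x}\equiv\xi$ on $[t_0,t]$; in particular $(t_0,x_0)=(t_0,\xi(t_0))\in V$, a contradiction. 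No auxiliary time $s_*$ is needed.

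Your ``flow-box argument'' at $(s_*,\mathbf{x}(s_*))$ is exactly this backward-regularity lemma in disguise, applied at one point rather than stated globally; so the hard content is the same, just less transparently packaged. One genuine slip: the sentence claiming that $D^+u(s_*,\mathbf{x}(s_*))=\{p_*\}$ would make $u$ ``$C^1$ in a neighborhood, by semiconcavity'' is false as stated---a semiconcave function can be differentiable at a point (e.g.\ a conjugate point) without being $C^1$ nearby, and Proposition~\ref{basic_facts_of_superdifferential}(c) requires the superdifferential to be a singleton \emph{everywhere} in an open set. Pointwise differentiability at $(s_*,\mathbf{x}(s_*))$ does not contradict membership in $\mbox{\rm sing supp}_{C^1}(u)$; you really do need the full flow-box/backward-regularity step you describe afterward, so that earlier sentence should be removed.
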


The proof of Proposition \ref{pro:A2014} is based on an improvement of some classic results when $u_0$ is of class $C^2$ (see, for instance, Chapter 6 of \cite{Cannarsa_Sinestrari_book}). In fact, even if $u_0$ is just continuous, one can show that \emph{if $(t,x)\not\in\text{sing supp}_{C^1}\,(u)$, then the associated optimal curve $\xi$ ending at $x$ must satisfies the property that $(s,\xi(s))\not\in\text{sing supp}_{C^1}\,(u)$ for $s\in(0,t]$}. Now, suppose there exists $(t,\mathbf{x}(t))\not\in\text{sing supp}_{C^1}\,(u)$ for some $t\in(t_0,T)$. Then there exists a tubular neighborhood $V\subset\R^{n+1}$ of $\{(s,\xi(s)): s\in[t_0,t)\}$, where  $\xi$ is the optimal curve such that $\xi(t)=\mathbf{x}(t)$. Moreover, $V\cap \text{sing supp}_{C^1}\,(u)=\varnothing$. On the open set $V$, $\mathbf{x}$ and $\xi$ are essentially identified because both solve the same ordinary differential equation \eqref{eq:gc_2} (by the claim above) and satisfy the same endpoint condition. This leads to a contradiction and it follows that the $C^1$ singular support must propagate to $(t,\mathbf{x}(t))$ along the generalized characteristic $\mathbf{x}$.

\begin{Rem}
We should emphasize that the proof of Proposition \ref{pro:A2014} is based on an intrinsic approach, i.e., the argument just uses the analysis of the associated characteristics system. 
\end{Rem}

\subsection{Global propagation of genuine singularities}

It is quite natural to ask the question if the singularities of the viscosity solution $u$ of \eqref{eq:HH_evo_1} can propagation along the associated generalized characteristic $\mathbf{x}$ for all $t>0$. In general, the answer is negative (see, for instance, Example 5.6.7 in \cite{Cannarsa_Sinestrari_book}). 

\subsubsection{Concave initial data}

For any open subset $\Omega\subset\R^n$, consider the following Hamilton-Jacobi equation 
\begin{equation}\label{eq:HH_evo_2}
	\begin{cases}
		D_tu(t,x)+H(D_xu(t,x))=0,\quad (t,x)\in(0,+\infty)\times\Omega\\
		u(0,x)=u_0(x),\quad x\in\Omega.
	\end{cases}
\end{equation}
If $\Omega=\R^n$, $H$ is of class $C^2$, $\alpha^{-1}I_n\leqslant D^2H\leqslant\alpha I_n$ for some $\alpha>0$, and equation \eqref{eq:HH_evo_2} admits a \emph{concave} solution, then it was showed in \cite{Albano_Cannarsa2000} that  if $(t_0,x_0)\in\SING$ then there exists a Lipschitz arc $(t,\mathbf{x}(t))$, $t\in[t_0,+\infty)$, with $\mathbf{x}(t_0)=x_0$, such that $(t,\mathbf{x}(t))\in\SING$ for all $t\in[t_0,+\infty)$. We remark that $u$ is concave in $[0,T]\times\R^n$ if and only if $u_0$ is concave. So, this result is very special.

\subsubsection{Generalized gradients}\label{sec:gg}

Let $S\subset\R^n$ be closed and denote by $d_S$ the distance function from $S$. It is well known that $u=d_S$ satisfies the eikonal equation
\begin{equation}\label{eq:Dirichlet_dist}
	\begin{cases}
		|Du(x)|=1,\quad x\in\R^n\setminus S,\\
		u(x)=0,\quad x\in S.
	\end{cases}
\end{equation}
Now, let $S=\R^n\setminus\Omega$ where $\Omega$ is an open domain in $\R^n$. In this case, the system of generalized characteristics becomes the \emph{generalized gradient} system:
\begin{equation}\label{eq:g_grad}
	\begin{cases}
		\dot{\mathbf{x}}(t)\in D^+u(\mathbf{x}(t)),\quad t\in[0,T],\\
		\mathbf{x}(0)=x_0\in\Omega.
	\end{cases}
\end{equation}
This is also the case for the  Hamiltonian of mechanical system which has the form $H(x,p)=\frac 12\langle A(x)p,p\rangle+V(x)$, where $V$ is a smooth function and $A(x)$ is a positive definite symmetric $n\times n$ real matrix smoothly depending on $x$.

If $\Omega$ is a bounded open subset of $\R^n$, it was shown in \cite{ACNS2013} that the generalized gradient flow given by \eqref{eq:g_grad} propagates singularities for all $t>0$. This is also true for the case of Riemannian manifolds. A  significant application of this global propagation result to geometry is that the singular set of $d_{S}$ has the same homotopy type as $\Omega$. Further deep extension of this topological result to the weak KAM context will be discussed later. We will also discuss more general Dirichlet problem in Section \ref{sec:Dirichlet}.

\subsubsection{Mechanical systems}

Now, suppose $H(p)=\frac 12\langle Ap,p\rangle$ with $A$ a positive definite $n\times n$ real matrix. Consider the following Cauchy-Dirichlet problem  
\begin{equation}\label{eq:HH_evo_3}
	\begin{cases}
		D_tu(t,x)+H(D_xu(t,x))=0,\quad (t,x)\in(0,+\infty)\times\Omega=:Q\\
		u(t,x)=\phi(t,x),\quad (t,x)\in\partial Q.
	\end{cases}
\end{equation}
Moreover,  assume $\phi$ satisfies the following compatibility condition
\begin{equation}\label{eq:compatibility1}
	\phi(t,x)-\phi(s,y)\leqslant(t-s)L\Big(\frac{x-y}{t-x}\Big)
\end{equation}
for all $(t,x),(s,y)\in\partial Q$ such that $t>s\geqslant0$.

\begin{Pro}[\cite{Cannarsa_Mazzola_Sinestrari2015}]\label{pro:CMS2015}
Let $\phi:\overline{Q}\to\R$ be a Lipschitz continuous function satisfying \eqref{eq:compatibility1} and let $u$ be a viscosity solution of \eqref{eq:HH_evo_3}. Given $(t_0,x_0)\in\SING$, let $\mathbf{x}$ be the generalized characteristic determined by \eqref{eq:gc_2} such that $\mathbf{x}(t_0)=x_0$. Then, there exists $T\in(0,+\infty]$ such that $(s,\mathbf{x}(s))\in\SING$ for all $s\in[t_0,t_0+T)$ and $\lim_{s\to(t_0+T)^-}\in\partial Q$ whenever $T<+\infty$.
\end{Pro}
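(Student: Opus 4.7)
The plan is to combine the local propagation machinery of Section 3 with a global energy estimate that exploits the quadratic structure of $H$. Because $H(p)=\tfrac12\langle Ap,p\rangle$ gives $H_p(p)=Ap$, and $D^+_xu$ is convex, the inclusion $\dot{\mathbf{x}}(t)\in\mathrm{co}\,H_p(D^+_xu(t,\mathbf{x}(t)))$ reduces to $\dot{\mathbf{x}}(t)\in A\,D^+_xu(t,\mathbf{x}(t))$, and the strictly convex minimization
\[
\mathbf{p}(t):=\arg\min_{p\in D^+_xu(t,\mathbf{x}(t))}H(p)
\]
produces a canonical measurable selection. I would then take $\dot{\mathbf{x}}(t)=A\mathbf{p}(t)$ as the specific generalized characteristic to follow.

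First, I would construct $\mathbf{x}$ on a maximal half-open interval $[t_0,t_0+T)$ by a Zorn / continuation argument based on Propositions~\ref{pro:AC2} and \ref{pro:CY}: at any interior point $(s,\mathbf{x}(s))\in\SING$ with $\mathbf{x}(s)\in\Omega$, the local results re-seed a further singular arc, so the family of singular extensions is inductive and admits a maximal element. It then remains to analyse the behaviour of $\mathbf{x}(s)$ as $s\to(t_0+T)^-$ whenever $T<+\infty$.

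The heart of the argument is an energy estimate along the characteristic, in the spirit of the identity displayed just after Proposition~\ref{pro:AC2}: for the quadratic Hamiltonian one expects the Lagrangian energy
\[
\mathcal{E}(s):=L(\dot{\mathbf{x}}(s))=H(\mathbf{p}(s))=\tfrac12\langle A\mathbf{p}(s),\mathbf{p}(s)\rangle
\]
to be non-increasing along $\mathbf{x}$, by testing the viscosity (sub/super) inequalities against the smooth approximants provided by Lemma~\ref{lem:approximation} and using the first-order optimality condition $\langle A\mathbf{p}(s),q-\mathbf{p}(s)\rangle\geqslant 0$ for every $q\in D^+_xu(s,\mathbf{x}(s))$. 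Coupled with the global Lipschitz and semiconcavity bounds on $u$, this quantitative control should yield a uniform lower bound on $\mathrm{diam}(D^+_xu(s,\mathbf{x}(s)))$ so long as $\mathbf{x}$ stays in a fixed compact subset of $\Omega$, which prevents $(s,\mathbf{x}(s))$ from exiting $\SING$ without $\mathbf{x}$ approaching $\partial Q$.

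Finally, the compatibility condition \eqref{eq:compatibility1} is what rules out parasitic regularization coming from the parabolic boundary: it encodes the fact that $\phi$ is propagated consistently by the Hamiltonian flow associated with $L(v)=\tfrac12\langle A^{-1}v,v\rangle$, so that an interior singular point cannot be suddenly resolved by a smooth branch emanating from $\partial Q$; combined with the energy dissipation above, this forces the only possible exit mechanism to be $\mathbf{x}(s)\to\partial Q$. The main obstacle I expect is precisely the energy monotonicity step: $D^+_xu$ is only upper semicontinuous, the minimal selection $\mathbf{p}(\cdot)$ can jump across a negligible set, and the chain-rule computation along $\mathbf{x}$ must be carried through the smooth approximations of Lemma~\ref{lem:approximation} via a measurable-selection and weak-compactness argument that survives the loss of classical differentiability of $u$.
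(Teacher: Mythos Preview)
Your proposal correctly identifies the energy method as the central mechanism, and the continuation argument via local propagation is the right scaffolding. However, there are two substantive discrepancies with the approach outlined in the paper (which, incidentally, does not give a full proof either but describes the two key technical ingredients from \cite{Cannarsa_Mazzola_Sinestrari2015}).

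First, the energy functional you track is the spatial one, $\mathcal{E}(s)=H(\mathbf{p}(s))$ with $\mathbf{p}(s)$ the argmin of $H$ over $D^+_xu(s,\mathbf{x}(s))$. The paper instead works with the \emph{full} Hamiltonian
\[
F(\tau,p)=\tau+H(p),
\]
evaluated at a selection of the full superdifferential $D^+u(s,\mathbf{x}(s))\subset\R^{n+1}$, and shows that this quantity decreases along the arc. The distinction matters: on reachable gradients $F$ vanishes, but on the interior of $D^+u$ it can be strictly negative, and it is precisely this negativity of $F$ at the minimal selection that certifies singularity. Monotonicity of $H(\mathbf{p}(s))$ alone is neither what one needs nor what one can easily prove; the time component $\tau$ cannot be decoupled here.

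Second, and more importantly, you omit entirely the first of the two ingredients the paper highlights: a \emph{sharp semiconcavity estimate for a suitable transform of the solution} $u$, imported from \cite{ACNS2013}. This is not a cosmetic step. The raw semiconcavity constant of $u(t,\cdot)$ blows up like $1/t$ near $t=0$ and gives no uniform control along the characteristic; the transform (in the quadratic case, essentially a time-dependent rescaling) is what produces bounds that are uniform in $s$ and hence survive the continuation argument. Your plan to extract a uniform lower bound on $\mathrm{diam}\,D^+_xu(s,\mathbf{x}(s))$ from ``global Lipschitz and semiconcavity bounds on $u$'' will not go through without this, because those bounds are not uniform in the relevant sense.

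You are right that the energy monotonicity step is the crux, and you correctly anticipate the measurable-selection and chain-rule difficulties. But the resolution in \cite{Cannarsa_Mazzola_Sinestrari2015} is not via the smooth approximations of Lemma~\ref{lem:approximation}; it goes through the two specific devices above. Your sketch, as it stands, is missing both.
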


The proof of the above result relies on two main ideas that are converted in two technical results, respectively. The first one is a sharp semiconcavity estimate for a suitable transform of the solution $u$ in \cite{ACNS2013}. The second one is an inequality established showing that the full Hamiltonian associated with \eqref{eq:HH_evo_3}, that is,
\begin{align*}
	F(\tau,p)=\tau+H(p),
\end{align*}
decreases along a selection of the superdifferential of $u$, evaluated at any point of a suitable arc.

\begin{Rem}
We remark that if $\Omega=\R^n$, Proposition \ref{pro:CMS2015} directly leads to a global propagation result. For general case, the statement ensures that the singularities will have global propagation or hit the boundary $\partial Q$ (see also Section \ref{sec:Dirichlet}). 
\end{Rem}

\section{Weak KAM aspects of singularities}

In this section, we will discuss the problem of propagation of singularities in the frame of weak KAM theory (\cite{Fathi1997_1,Fathi1997_2,Fathi1998_1,Fathi1998_2,Fathi_Maderna2007,Fathi_book}) and Mather theory (\cite{Mather1991,Mather1993,Mane1992}).

\subsection{Weak KAM aspects of Hamilton-Jacobi equations}

Suppose $M$ is a smooth manifold without boundary and $TM$ (resp. $T^*M$) is the tangent (cotangent) bundle of $M$. Let $L:TM\to\R$ be a Tonelli Lagrangian, i.e., $L$ is of class $C^2$, and $L(x,\cdot)$ is strictly convex for all $x\in M$ and uniformly superlinear. Let $H:T^*M\to\R$ be the associated \emph{Tonelli Hamiltonian}. We consider the stationary Hamilton-Jacobi equation
\begin{equation}\label{eq:HJ_s}
	H(x,Du(x))=0,\quad x\in M
\end{equation}
and the evolutionary one
\begin{equation}\label{eq:HJ_e}
	\begin{cases}
		D_tu+H(x,D_xu)=0,&\quad (t,x)\in(0,+\infty)\times M,\\
		u(0,x)=\phi(x),&\quad x\in M.
	\end{cases}
\end{equation}
In equation \eqref{eq:HJ_s} we always suppose $0$ on the right side equals  Ma\~n\'e's critical value. 

The solution of equation \eqref{eq:HJ_e} can be regarded as the value function of some basic problem in the calculus of variation or optimal control. For any $x,y\in M$ and $t>0$, we denote by $\Gamma^t_{x,y}$ the set of all absolutely continuous curves $\xi:[0,t]\to M$ such that $\xi(0)=x$ and $\xi(t)=y$. We define the \emph{fundamental solution} of  \eqref{eq:HJ_e} as
\begin{align}\label{eq:action}
	A_t(x,y)=\inf_{\xi\in\Gamma^t_{x,y}}\int^t_0L(\xi,\dot{\xi})\ ds,\quad x,y\in M, t>0.
\end{align}

Recall that for any Tonelli Lagrangian, the function $(t,y)\mapsto A_t(x,y)$ is  locally semiconcave and semiconvex for small $t>0$. Moreover, the function $y\mapsto A_t(x,y)$ is convex with constant $C/t$ for small $t$ (see, for instance, \cite[Proposition B.8]{Cannarsa_Cheng3}). In symplectic geometry, $A_t(x,y)$ is also known as  \emph{generating function}. The following result is  known for  generating functions in symplectic geometry (see, for instance, \cite[Chapter 9]{McDuff_Salamon_book2017}). The readers can compare  Proposition B.8 in \cite{Cannarsa_Cheng3} (see also \cite[Theorem 4.2]{Cannarsa_Fankowska2014} for Cauchy problems) and the following lemma for fundamental solutions of Hamilton-Jacobi equations, with two analogous concepts of convexity radius and injectivity radius from Riemannian geometry.

\begin{Lem}\label{lem:C2}
For any $\lambda>0$ there exists $t_{\lambda}>0$ such that the function $(t,y)\mapsto A_t(x,y)$ is of class $C^2$ in the cone
\begin{align*}
	S_\lambda(x,t_\lambda):=\big\{(t,y)\in\R\times\R^n~:~0<t< t_\lambda,\; |y-x|<\lambda t\big\}\,.
\end{align*}
\end{Lem}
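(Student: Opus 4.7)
The plan is to parametrize minimizers of the action by their initial momentum via the Hamiltonian flow, and to show via the implicit function theorem that the resulting inverse map is smooth on the cone $S_\lambda(x,t_\lambda)$. The key analytic ingredient is a ``rescaled endpoint map'' which extends smoothly to $t=0$, where its linearization in $p_0$ reduces to $H_{pp}$ and is therefore invertible by the strict convexity of the Tonelli Hamiltonian in the momentum.

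First I would establish a priori bounds. By Tonelli's existence theorem, for each $(t,y) \in S_\lambda(x,t_\lambda)$ there exists a minimizer $\xi \in \Gamma^t_{x,y}$, and the mean-velocity constraint $|y-x|/t < \lambda$ together with the superlinearity of $L$ produces a uniform bound $|\dot\xi|_\infty \leqslant M(\lambda)$ for all minimizers joining $x$ to such endpoints in short time. Via the Legendre transform, $\xi$ is the projection of a Hamiltonian trajectory $s \mapsto (X(s;x,p_0), P(s;x,p_0))$ whose initial momentum $p_0$ ranges in a fixed compact set $K_\lambda \subset T_x^*M$.

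Next I would introduce the rescaled endpoint map
\begin{align*}
\Phi(t,p_0) \;:=\; \begin{cases} H_p(x,p_0), & t=0,\\[2pt] \dfrac{X(t;x,p_0)-x}{t}, & t>0, \end{cases}
\end{align*}
and verify, using the Taylor expansion $X(t;x,p_0) = x + t H_p(x,p_0) + O(t^2)$, that $\Phi$ extends to a map of the smoothness class of the Hamiltonian flow in a neighborhood of $\{0\}\times K_\lambda$, with $\partial_{p_0}\Phi(0,p_0) = H_{pp}(x,p_0)$. Since $H$ is strictly convex in $p$, this linearization is invertible; by the implicit function theorem and the compactness of $K_\lambda$, there exists $t_\lambda>0$ such that for every $t \in [0,t_\lambda)$ the map $p_0 \mapsto \Phi(t,p_0)$ is a diffeomorphism from a neighborhood of $K_\lambda$ onto a neighborhood of $\{v : |v| \leqslant \lambda\}$. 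Hence for $(t,y) \in S_\lambda(x,t_\lambda)$ one obtains a unique, smoothly varying $p_0(t,y)$ satisfying $\Phi(t,p_0(t,y))=(y-x)/t$, i.e.\ $X(t;x,p_0(t,y))=y$.

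Finally, after possibly shrinking $t_\lambda$ to ensure no conjugate points arise on $(0,t_\lambda)$, the projected characteristic $s \mapsto X(s;x,p_0(t,y))$ is the unique global minimizer in $\Gamma^t_{x,y}$, so
\begin{align*}
A_t(x,y) \;=\; \int_0^t L\!\bigl(X(s;x,p_0(t,y)),\,\dot X(s;x,p_0(t,y))\bigr)\,ds,
\end{align*}
and smoothness of the Hamiltonian flow in initial data, combined with smoothness of $(t,y)\mapsto p_0(t,y)$ on $S_\lambda(x,t_\lambda)$, yields the $C^2$ regularity claimed. The step I expect to require the most care is verifying that the critical point furnished by inverting $\Phi$ really is the global minimizer rather than just a stationary curve; this is where the absence of conjugate points on short time intervals, together with strict convexity of the action on a suitable neighborhood of the candidate curve, must be used to promote ``critical'' to ``uniquely minimizing'', and where the threshold $t_\lambda$ may need to be reduced a final time.
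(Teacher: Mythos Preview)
Your proposal is correct and follows essentially the same route as the paper: parametrize minimizers by their initial momentum via the Hamiltonian flow and invert the endpoint map using the nondegeneracy $H_{pp}>0$. The paper differs only cosmetically---it works with the unrescaled map $(t,q)\mapsto(t,X(t,q))$ and obtains small-time invertibility by integrating the variational equation to get $X_q(t,q)\geqslant\tfrac{\nu}{2}\,t\,I_n$ directly, and it handles the global-minimality step by citing the known $C^{1,1}$ regularity of $A_t(x,\cdot)$ on $B(x,\lambda t)$ (Proposition~B.8 of \cite{Cannarsa_Cheng3}) rather than arguing via absence of conjugate points as you propose.
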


\begin{proof}
Fix $x\in\R^n$ and let $q\in\R^n$. For $t>0$ consider the Hamiltonian system
\begin{equation}\label{eq:H}
	\begin{cases}
		\dot{X}(t)=H_p(X(t),P(t)),& X(0)=x,\\
		\dot{P}(t)=-H_x(X(t),P(t)),& P(0)=q.
	\end{cases}
\end{equation}
We denote the solution of \eqref{eq:H} by $(X(t,q),P(t,q))$. Define $\Phi:(0,\infty)\times\R^n\to(0,\infty)\times\R^n$ as
\begin{align*}
	\Phi(t,q)=(t,X(t,q)).
\end{align*}
Observe that $\Phi$ is of class $C^2$. The associated variational equation is
\begin{equation}\label{eq:V}
	\begin{cases}
		X'_q(t,q)=H_{px}(X,P)X_q(t,q)+H_{pp}(X,P)P_q(t,q),& X_q(0,q)=0;\\
		P'_q(t,q)=-H_{xx}(X,P)X_q(t,q)-H_{xp}(X,P)P_q(t,q),& P_q(0,q)=I.
	\end{cases}
\end{equation}
Consequently, we have $X'_q(0,q)=H_{pp}(x,q)$. Since $H$ is a Tonelli Hamiltonian, we conclude that for any $R>0$ there exists $\nu(R)>0$ such that if $|q|\leqslant R$ then $H_{pp}(x,q)>\nu(R)I$. Moreover, by the Lipschitz dependence of the solution of \eqref{eq:H} and \eqref{eq:V} with respect to initial data, we obtain
\begin{equation}\label{eq:LD}
	|X'_q(t,q)-X'_q(0,q)|\leqslant C(R)t,\quad\forall t\in[0,1],\ \forall |q|\leqslant R
\end{equation}
with $C(\cdot,\cdot)>0$ nondecreasing for all variables. So
\begin{align*}
	X_q(t,q)=&\,\int^t_0X'_q(s,q)\ ds\\
	=&\,\int^t_0X'_q(0,q)\ ds+\int^t_0(X'_q(s,q)-X'_q(0,q))\ ds\\
	\geqslant&\,(\nu(R)t)I_n-\left(\frac{C(R)}2\cdot t^2\right)I_n\geqslant\bigg(\nu(R)-\frac{C(R)}2\bigg)tI_n.
\end{align*}
We conclude that for any $R>0$ there exist $\nu(R),T(R)>0$ such that
\begin{equation}\label{eq:PD}
	|q|\leqslant R\quad\Rightarrow\quad X_q(t,q)\geqslant\frac{\nu(R)}2tI_n,\quad\forall|q|\leqslant R,\ 0\leqslant t\leqslant T(R).
\end{equation}

Let $\lambda>0$. Then there exists $t_\lambda$ by Proposition B.8 in \cite{Cannarsa_Cheng3} such that $A_t(x,\cdot)$ is of class $C^{1,1}_{loc}$ in $B(x,\lambda t)$ for $0<t\leqslant t_\lambda$. For any $0<t\leqslant t_\lambda$, $y\in B(x,\lambda t)$ there exists a unique minimizer $\xi_{t,y}\in\Gamma^t_{x,y}$ for $A_t(x,y)$. Notice $|L_v(\xi_{t,y},\dot{\xi}_{t,y})|\leqslant K(\lambda)$ for some constant $K(\lambda)>0$. Let
\begin{align*}
	R_0=K(\lambda),\quad T_0=T(K(\lambda)),\quad\nu_0=\frac{\nu(K(\lambda))}{2}
\end{align*}
and fix $t_0\in(0,T_0)$. Set $q_0=L_v(x,\dot{\xi}_{t_0,y}(0))$. We want to show that
\begin{align*}
	\Phi:(0,T_0)\times\R^n\to(0,T_0)\times\R^n
\end{align*}
is locally invertible at $(t_0,q_0)$ with a $C^2$ inverse. For this we observe that
\begin{align*}
	D\Phi(t,q)=
	\begin{pmatrix}
		1&X'(t,q)\\
		0&X_q(t,q)
	\end{pmatrix}
\end{align*}
and \eqref{eq:PD} implies that
\begin{align*}
	\det D\Phi(t,q)>0,\quad (t,q)\in(0,T_0)\times B(0,R_0).
\end{align*}
Then the conclusion follows from the inverse mapping theorem.

We now claim that
\begin{equation}
	X(s,q_0)=\xi_{t_0,y}(s),\quad s\in[0,t_0].
\end{equation}
This follows from the fact that $X(\cdot,q_0)$ and $\xi_{t_0}(\cdot)$ are both solutions of the Cauchy problem
\begin{align*}
	\begin{cases}
		\frac d{ds}L_v(\xi(s),\dot{\xi}(s))=L_x(\xi(s),\dot{\xi}(s)),\quad s\in[0,t]\\
		\xi(0)=x,\ \dot{\xi}(0)=\dot{\xi}_{t_0,y}(0).
	\end{cases}
\end{align*}
Consequently, $X(t_0,q_0)=\xi_{t_0}(t_0)=y$. 
Recalling that 
$$D_tA_t(x,y)=-H(\xi_{t,y}(t),L_v(\xi_{t,y}(t),\dot{\xi}_{t,y}(t)))\quad\mbox{and}\quad D_yA_t(x,y)=L_v(\xi_{t,y}(t),\dot{\xi}_{t,y}(t))$$
one completes the proof.
\end{proof}

Whenever \eqref{eq:HJ_e} has a unique solution, such a solution satisfies the Lax-Oleinik formula. More precisely, for any $\phi:M\to\R$, any $t>0$ and  any $x\in M$ we define
\begin{equation}\label{eq:Lax-Oleinik}
	\begin{split}
		T_t\phi(x)=\inf_{y\in M}\{\phi(y)+A_t(y,x)\},\\
	\breve{T}_t\phi(x)=\sup_{y\in M}\{\phi(y)-A_t(x,y)\}.
	\end{split}
\end{equation}
Then $u(t,x)=T_t\phi(x)$ is the (unique) viscosity solution of \eqref{eq:HJ_e}. Similarly, $\breve{T}_t\phi(x)$ gives the representation of the viscosity solution of \eqref{eq:HJ_e} when replacing $H$ by $-H$. We call $\{T_t\}_{t>0}$ and $\{\breve{T}_t\}_{t>0}$ the negative and positive type Lax-Oleinik operators, respectively. Both of them are continuous semigroups on suitable function spaces of initial data.

\begin{defn}
Let $u$ be a continuous function on $M$. We say $u$ is \emph{$L$-dominated} if 
\begin{align*}
	u(\xi(b))-u(\xi(a))\leqslant \int^b_aL(\xi(s),\dot{\xi}(s))\ ds,
\end{align*}
for all absolutely continuous curves $\xi:[a,b]\to\R^n\;(a<b)$, with $\xi(a)=x$ and $\xi(b)=y$. We say such an absolutely continuous curve $\xi$ is a \emph{$(u,L)$-calibrated curve}, or a \emph{$u$-calibrated curve} for short, if the equality holds in the inequality above. A curve $\xi:(-\infty,0]\to\R^n$ is called a $u$-calibrated curve if it is $u$-calibrated  on each compact sub-interval of $(-\infty,0]$. In this case, we also say that $\xi$ is a \emph{backward calibrated curve} (with respect to $u$).
\end{defn}

\begin{Pro}[\cite{Fathi1997_1,Fathi_Maderna2007}]\label{pro:weak_KAM}
There exists a  Lipschitz  semiconcave viscosity solution of \eqref{eq:HJ_s}. Moreover, such a solution $u$ is a common fixed point of the semigroup $\{T_t\}$, i.e., $T_tu=u$ for all $t\geqslant0$.
\end{Pro}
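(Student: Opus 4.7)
The plan is to realize $u$ as a fixed point of the negative Lax-Oleinik operator $T_1$ via a Schauder--Tychonoff argument, then promote the fixed point property to every $t\geq 0$ via backward calibration. I describe the compact case $M$ compact, which is the classical Fathi setting; the non-compact case is analogous following \cite{Fathi_Maderna2007}.

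First, I would identify a compact convex subset $\mathcal{K}\subset C(M)$ preserved by a normalized version of $T_1$. Three facts enter: (i) for any continuous $\phi$, the function $T_t\phi$ is $L$-dominated and hence Lipschitz with some constant $\kappa$ depending only on $L$; (ii) by Lemma \ref{lem:C2}, $A_1(y,\cdot)$ is semiconcave with constant $C$ uniform in $y$, so the infimum $T_1\phi=\inf_y\{\phi(y)+A_1(y,\cdot)\}$ inherits $C$-semiconcavity as a pointwise infimum of uniformly semiconcave functions; (iii) since $T_t$ commutes with the addition of constants, the normalization $\min_M v=0$ combined with (i) produces a uniform $L^\infty$ bound via $\kappa\cdot\mathrm{diam}(M)$. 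The set
\begin{align*}
\mathcal{K}=\{v\in C(M):v \text{ is } \kappa\text{-Lipschitz},\ C\text{-semiconcave},\ \min_M v=0\}
\end{align*}
is then compact in $C(M)$ by Arzel\`a--Ascoli and convex.

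Second, Schauder's fixed point theorem applied to the continuous self-map $\Phi(v):=T_1 v-\min_M T_1 v$ on $\mathcal{K}$ yields $u\in\mathcal{K}$ with $T_1 u=u+c$ for some $c\in\mathbb{R}$. Iterating gives $T_n u=u+nc$ for $n\in\mathbb{N}$, and the normalization that $0$ equals Ma\~n\'e's critical value forces $c=0$: indeed, $c(L)$ is characterized as the unique constant making Lax-Oleinik orbits bounded in $C(M)/\mathbb{R}$, so any other value of $c$ would contradict membership of $T_n u$ in the compact set $\mathcal{K}$ modulo constants. Hence $T_1 u=u$.

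Third, to upgrade $T_1 u=u$ to $T_t u=u$ for all $t\geq 0$, note that $T_1 u=u$ immediately implies $u$ is $L$-dominated, and that the infimum defining $T_1 u(x)$ is attained along a $(u,L)$-calibrated minimizer on $[0,1]$. Concatenating unit-time calibrated segments backwards produces, for every $x\in M$, a backward $(u,L)$-calibrated curve $\gamma:(-\infty,0]\to M$ with $\gamma(0)=x$. For any $t>0$, combining this curve with $L$-domination gives
\begin{align*}
u(x)\leq T_t u(x)\leq u(\gamma(-t))+A_t(\gamma(-t),x)=u(x),
\end{align*}
so $T_t u=u$. Lipschitz regularity and semiconcavity of $u$ are built into membership in $\mathcal{K}$. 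The main obstacle I expect is the middle step: pinning down the additive constant from Schauder to zero. This is precisely where the normalization that $0$ is Ma\~n\'e's critical value is used in an essential way, via its dual characterization as the unique constant that preserves uniform boundedness of Lax-Oleinik orbits modulo constants --- itself a cornerstone of weak KAM theory rather than a formality.
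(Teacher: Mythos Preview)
The paper does not supply its own proof of Proposition~\ref{pro:weak_KAM}; the result is stated with attribution to \cite{Fathi1997_1,Fathi_Maderna2007} and no argument is given in the text. So there is nothing in the paper to compare your proposal against.

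On its own merits, your Schauder--Tychonoff outline is one of the standard routes to the weak KAM theorem and is sound in its large strokes. Two points deserve correction. First, your appeal to Lemma~\ref{lem:C2} for the uniform semiconcavity of $A_1(y,\cdot)$ is misplaced: that lemma concerns $C^2$ regularity of $(t,y)\mapsto A_t(x,y)$ in a small-time cone $0<t<t_\lambda$, not a semiconcavity bound at $t=1$. The uniform semiconcavity of $A_t(y,\cdot)$ for fixed $t>0$ is a separate a~priori estimate for Tonelli actions (alluded to just before Lemma~\ref{lem:C2} in the paper, but not proved there).

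Second, the argument you give for $c=0$ does not work as written. From $T_1u=u+c$ one gets $T_nu=u+nc$, but then $\Phi^n(u)=T_nu-\min_M T_nu=u$ for \emph{every} value of $c$, so no contradiction with compactness of $\mathcal{K}$ modulo constants arises. More generally, Lax--Oleinik orbits on a compact manifold are \emph{always} bounded in $C(M)/\R$ (by the uniform Lipschitz estimate you yourself invoke in step~(i)), so ``the unique constant making orbits bounded in $C(M)/\R$'' does not single out the Ma\~n\'e critical value. The correct mechanism is the asymptotic-slope characterization: for any $\phi\in C(M)$ and $x\in M$ one has $\lim_{t\to\infty}t^{-1}T_t\phi(x)=-c(L)$, and plugging in $T_nu=u+nc$ forces $c=-c(L)=0$ under the standing hypothesis. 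With these two repairs the sketch goes through.
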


Recall that a continuous function $u$ on $M$ is called a \emph{weak KAM solution} of \eqref{eq:HJ_s} if $T_tu=u$ for all $t>0$. The following result explains the relation between  the set of all reachable gradients and the set of all backward calibrated curves from $x$ (see, e.g., \cite{Cannarsa_Sinestrari_book} or \cite{Rifford2008} for the proof). 

\begin{Pro}\label{reachable_grad_and_backward}
Let $u:M\to\R$ be a weak KAM solution of  \eqref{eq:HJ_s} and let  $x\in M$. Then $p\in D^{\ast}u(x)$ if and only if there exists a unique $C^2$ curve $\xi:(-\infty,0]\to M$ with $\xi(0)=x$ and $p=L_v(x,\dot{\xi}(0))$, which is a backward calibrated curve with respect to $u$.
\end{Pro}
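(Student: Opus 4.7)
The plan is to prove the two implications by distinct arguments: the forward direction ($p \in D^*u(x) \Rightarrow$ existence of a calibrated curve) via a compactness argument on Euler--Lagrange flows, and the backward direction via the classical fact that $u$ is differentiable at every interior point of a calibrating orbit.

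For the forward direction, I would pick a sequence $x_k \to x$ at which $u$ is differentiable with $Du(x_k) \to p$. Since $u$ is a weak KAM solution, through each $x_k$ passes a $C^2$ backward calibrated curve $\xi_k:(-\infty,0] \to M$ with $\xi_k(0) = x_k$ (obtained by iterating the Lax--Oleinik identity $u = T_t u$ and a diagonal extraction); at the differentiability point $x_k$ the initial velocity is forced by $Du(x_k) = L_v(x_k, \dot{\xi}_k(0))$. Strict convexity of $L$ makes $L_v(x_k,\cdot)$ a diffeomorphism, so $\dot{\xi}_k(0) = L_v^{-1}(x_k, Du(x_k)) \to L_v^{-1}(x, p) =: v_0$. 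Each $\xi_k$ solves the Euler--Lagrange equation, and continuous dependence on initial data yields $\xi_k \to \xi$ in $C^2_{\mathrm{loc}}((-\infty,0])$ with $\xi(0)=x$, $\dot{\xi}(0) = v_0$. Passing to the limit in the calibration identity $u(\xi_k(t)) - u(\xi_k(s)) = \int_s^t L(\xi_k, \dot{\xi}_k)\,d\tau$ (using continuity of $u$) shows $\xi$ is backward calibrated, and $p = L_v(x, \dot{\xi}(0))$ by construction.

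For the backward direction, given a backward calibrated $\xi$ with $\xi(0)=x$ and $p = L_v(x, \dot{\xi}(0))$, I would show that $u$ is differentiable at $\xi(t)$ for every $t<0$ with $Du(\xi(t)) = L_v(\xi(t), \dot{\xi}(t))$, and then let $t \to 0^-$. Fix $t<0$ and choose $s_1 < t < s_2 \leqslant 0$ with $t-s_1$ and $s_2-t$ small enough that $A_{t-s_1}(\xi(s_1),\cdot)$ and $A_{s_2-t}(\cdot,\xi(s_2))$ are $C^2$ near $\xi(t)$, by Lemma~\ref{lem:C2} (applied, for the second factor, in its time-reversed form). From $L$-domination combined with calibration on $[s_1,t]$, the function $y \mapsto u(\xi(s_1)) + A_{t-s_1}(\xi(s_1),y)$ dominates $u$ near $\xi(t)$ and equals it at $\xi(t)$, so its gradient $L_v(\xi(t),\dot{\xi}(t))$ lies in $D^+u(\xi(t))$. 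Symmetrically, $y \mapsto u(\xi(s_2)) - A_{s_2-t}(y,\xi(s_2))$ lies below $u$ near $\xi(t)$ with equality at $\xi(t)$, giving $L_v(\xi(t),\dot{\xi}(t)) \in D^-u(\xi(t))$. Proposition~\ref{basic_facts_of_superdifferential}(c) then forces differentiability at $\xi(t)$ with the claimed gradient, and continuity of $\xi$ and $L_v$ as $t\to 0^-$ produces $p \in D^*u(x)$.

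Uniqueness is then immediate: if $\xi_1,\xi_2$ are two such curves, Legendre inversion gives $\dot{\xi}_1(0) = L_v^{-1}(x,p) = \dot{\xi}_2(0)$, so as $C^2$ Euler--Lagrange solutions with identical initial state they agree on $(-\infty,0]$. The most delicate point I foresee is the two-sided pinching of $Du(\xi(t))$: it rests on the identities $D_y A_\tau(z,\xi(t)) = L_v(\xi(t),\dot{\xi}(t))$ and $D_y A_\tau(\xi(t),z) = -L_v(\xi(t),\dot{\xi}(t))$, evaluated along a piece of the calibrating orbit, which in turn require the $C^2$ regularity of $A_\tau$ furnished by Lemma~\ref{lem:C2} on sufficiently short, conjugate-point-free subarcs of $\xi$.
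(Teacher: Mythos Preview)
The paper does not give its own proof of this proposition: it simply cites \cite{Cannarsa_Sinestrari_book} and \cite{Rifford2008}. Your argument is correct and is essentially the standard one found in those references---the forward direction by limiting calibrated curves through nearby differentiability points, the backward direction by the two-sided $A_t$-pinching that forces differentiability at interior points of a calibrating orbit, and uniqueness by Legendre inversion plus ODE uniqueness.

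One small remark on presentation: in the forward direction you invoke $Du(x_k)=L_v(x_k,\dot\xi_k(0))$ as if it were known, but this is precisely (a special case of) the backward implication. It is cleaner either to prove the backward direction first, or to justify that identity directly at the differentiability point $x_k$ by the same touching argument you use later (from domination plus calibration, $y\mapsto u(\xi_k(-t))+A_t(\xi_k(-t),y)$ is a $C^1$ upper support for $u$ at $x_k$ for small $t$, whence $Du(x_k)=D_yA_t(\xi_k(-t),x_k)=L_v(x_k,\dot\xi_k(0))$). This removes any appearance of circularity.
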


\subsection{Local propagation}

In the study of singularities of weak KAM solutions, the first issue to  address is the possible existence of isolated singular points. A typical family of Hamilton-Jacobi equations on the $n$-torus $\T^n$ is 
\begin{equation}\label{eq:cell}
	H(x,c+Du_c(x))=\alpha(c),\quad x\in\T^n,
\end{equation}
where $\alpha(c)$ is Mather's $\alpha$-function evaluated at $c\in\R^n$ (\cite{Mather1991}). For given $c\in\R^n$, $\alpha(c)$ is exactly  Ma\~n\'e's critical value for the Hamiltonian $H(x,c+p)$. Recall the function $\alpha$ is convex and superlinear. Usually, the level set $\Lambda=\arg\min_{c\in\R^n}\alpha(c)$ has a flat part. Observe that, for the one-dimensional pendulum system, there exist isolated singular points of a weak KAM solution $u_c$ if $c$ is contained in the relative interior of $\Lambda$. A criterion to ensure the non-existence of isolated singular points is $c^*\not\in\Lambda$, or
\begin{equation}
	\alpha(c^*)>\min_{c\in\R^n}\alpha(c).
\end{equation}
A confirmative result that no isolated singular point exists for a weak KAM solution $u_c$ of \eqref{eq:cell} was proved in \cite{Cannarsa_Cheng_Zhang2014} for mechanical systems using a topological argument. 

\subsection{Intrinsic singular characteristics}\label{sec:intrinsic}

\subsubsection{Characteristics of weak KAM solution}

In this section, we suppose $u$ is a Lipschitz  semiconcave weak KAM solution of \eqref{eq:HJ_s} on $M=\R^n$.

In \cite{Cannarsa_Cheng3}, another kind of singular curves for $u$  is constructed as follows. First, it is shown that there exists $\lambda_0>0$ such that for any $(t,x)\in \R_+\times \R^n$ and any maximizer $y$  for the function $u(\cdot)-A_t(x,\cdot)$, we have that $|y-x|\leqslant\lambda_0 t$. Then, taking $\lambda=\lambda_0+1$, one shows  that, for some  $t_{0}>0$ and any $t\in(0,t_0]$, there exists a unique  $y_{t,x}\in B(x,\lambda t)$ such that
\begin{equation}\label{eq:sup_max_rep}
	\breve{T}_tu(x)=u(y_{t,x})-A_t(x,y_{t,x}).
\end{equation}
Moreover, $u(\cdot)-A_t(x,\cdot)$  is concave with constant $C_1-C_2/t<0$ for $0<t\leqslant t_0$. For any fixed $x\in\R^n$ define 
\begin{equation}\label{eq:curve_max}
	\mathbf{x}(t)=
	\begin{cases}
		x,&t=0,\\
		y_{t,x},& t\in(0,t_0].
	\end{cases}
\end{equation}

\begin{Pro}[\cite{Cannarsa_Cheng3}]\label{pro:y}
Let   $x\in\R^n$ and let $\mathbf{x}$ be the curve defined in \eqref{eq:curve_max}. Then, the following  holds:
\begin{enumerate}[\rm (1)]
	\item $\mathbf{x}$ is Lipschitz continuous;
	\item if $x\in\SING$, then $\mathbf{x}(t)\in\SING$ for all $t\in[0,t_0]$;
	\item $\dot{\mathbf{x}}^+(0)$ exists and
	\begin{align*}
		\dot{\mathbf{x}}^+(0)=H_p(x_0,p_0)
	\end{align*}
	where $p_0=\arg\min\{H(x_0,p): p\in D^+u(x)\}$.
\end{enumerate}  
\end{Pro}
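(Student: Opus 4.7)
The plan is to prove the three items in order, exploiting the strict concavity of $\phi_t(y):=u(y)-A_t(x,y)$ with modulus $C_2/t-C_1>0$ noted just before the statement, together with the $C^2$ regularity of $A_\tau(x,\cdot)$ on the cone granted by Lemma~\ref{lem:C2}. For (1), I would write the strong-concavity inequality for $\phi_t$ at $y_{t,x}$ tested against $y_{s,x}$, and the symmetric one for $\phi_s$ at $y_{s,x}$ tested against $y_{t,x}$, and add them: the $u$-values cancel and one arrives at
\[
  \tfrac{1}{2}\bigl(\tfrac{C_2}{t}+\tfrac{C_2}{s}-2C_1\bigr)\,|y_{t,x}-y_{s,x}|^2 \leq \int_s^t\!\bigl[\partial_\tau A_\tau(x,y_{t,x})-\partial_\tau A_\tau(x,y_{s,x})\bigr]\,d\tau.
\]
Using the Hamilton--Jacobi identity $\partial_\tau A_\tau = -H(y,D_y A_\tau)$ and the fact that the $y$-Hessian of $A_\tau$ is $O(1/\tau)$ on the cone (extracted from the proof of Lemma~\ref{lem:C2}), the integrand is controlled by $(K/\tau)|y_{t,x}-y_{s,x}|$. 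The $1/\tau$ factors on the two sides cancel, yielding the local Lipschitz estimate $|y_{t,x}-y_{s,x}|\leq K|t-s|$.

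For (2), I would argue by contrapositive. Suppose $y := y_{t,x}$ is a regular point of $u$: then first-order optimality at the maximizer yields $p := D_y A_t(x,y)\in D^+u(y)=\{Du(y)\}$, so $p=Du(y)\in D^*u(y)$, and by Proposition~\ref{reachable_grad_and_backward} there is a unique $(u,L)$-backward calibrated $C^2$ arc $\gamma:(-\infty,0]\to\R^n$ with $\gamma(0)=y$ and $L_v(y,\dot\gamma(0))=p$. The Tonelli minimizer $\xi:[0,t]\to\R^n$ realizing $A_t(x,y)$ (unique by Lemma~\ref{lem:C2}) satisfies $\xi(t)=y$ and $L_v(y,\dot\xi(t))=p$, so uniqueness of the backward Euler--Lagrange flow from $(y,p)$ forces $\xi(s)=\gamma(s-t)$ on $[0,t]$. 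Hence $x=\gamma(-t)$ lies in the interior of the calibrated arc $\gamma$. Reading the calibration identity on $[-t-\epsilon,-t]$ and $[-t,-t+\epsilon]$ against semiconcavity of $u$, any $q\in D^+u(x)$ must satisfy $\langle q,\dot\gamma(-t)\rangle = L(x,\dot\gamma(-t))$; since $H(x,q)\leq 0$ (subsolution property), this is the Legendre equality case, giving $H(x,q)=0$ and $\dot\gamma(-t)=H_p(x,q)$, which by strict convexity of $H(x,\cdot)$ determines $q$ uniquely. Thus $D^+u(x)$ is a singleton, contradicting $x\in\SING$.

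For (3), let $v_t := (y_{t,x}-x)/t$, bounded by $\lambda$. The expansion $A_t(x,x+tv)=tL(x,v)+O(t^2)$ uniform for $|v|\leq\lambda$ is standard from the $C^2$-regularity. Semiconcavity of $u$ with supergradient $p_0$ gives an upper bound on $\breve T_t u(x)$; testing at $y=x+tH_p(x,p_0)$ and using the semiconcave directional identity $\partial^+_v u(x)=\min_{p\in D^+u(x)}\langle p,v\rangle$ together with the first-order optimality of $p_0$ (which yields $\min_{p\in D^+u(x)}\langle p,H_p(x,p_0)\rangle=\langle p_0,H_p(x,p_0)\rangle$) furnishes a matching lower bound, so $\breve T_t u(x)=u(x)+tH(x,p_0)+o(t)$. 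Substituting this into the identity $\breve T_t u(x)=u(y_{t,x})-A_t(x,y_{t,x})$ and the upper bound $u(y_{t,x})\leq u(x)+t\langle p_0,v_t\rangle+O(t^2)$ forces $\langle p_0,v_t\rangle-L(x,v_t)\to H(x,p_0)$ as $t\to0^+$. Since $v\mapsto\langle p_0,v\rangle-L(x,v)$ is strictly concave with unique maximizer $H_p(x,p_0)$ attaining the value $H(x,p_0)$, one concludes $v_t\to H_p(x,p_0)$, hence $\dot{\mathbf x}^+(0)=H_p(x,p_0)$. The main obstacle, in my view, is (1): the Lipschitz-in-$y$ constant for $\partial_\tau A_\tau$ blows up like $1/\tau$, which must be cancelled exactly against a strong-concavity modulus blowing up at the same rate, so the estimate only succeeds because the two singular factors match precisely; (2) and (3) then proceed smoothly once Fathi's characterization of $D^*u$ and the Legendre duality are in hand.
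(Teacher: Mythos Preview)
Your argument for (2) matches the paper's outline almost verbatim: Fermat's rule at the maximizer, identification of the minimal arc for $A_t(x,\cdot)$ with a backward $u$-calibrated arc via Proposition~\ref{reachable_grad_and_backward}, and the conclusion that $x$ sits in the interior of a calibrated curve, contradicting $x\in\SING$. You are simply more explicit than the paper about why the interior of a calibrated curve consists of differentiability points. For (3) the paper gives no argument here (it defers to \cite{Cannarsa_Cheng3}); your proof via the expansion $\breve T_tu(x)=u(x)+tH(x,p_0)+o(t)$ and the strict concavity of $v\mapsto\langle p_0,v\rangle-L(x,v)$ is correct.

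For (1) you take a genuinely different route. The paper's proof (advertised as a \emph{new} one relative to \cite{Cannarsa_Cheng3}) goes through approximation: for a $C^2$ datum $\phi$ the maximizer solves $D\phi(y(t))=D_yA_t(x,y(t))$, which is differentiated in $t$ to produce an explicit ODE and hence a Lipschitz bound depending only on $\mathrm{Lip}(\phi)$; one then replaces $u$ by smooth $u_m$ via Lemma~\ref{lem:approximation}, obtains equi-Lipschitz curves $y_m$, and passes to the limit using the strong concavity of $u(\cdot)-A_t(x,\cdot)$. Your argument bypasses approximation entirely: adding the two strong-concavity inequalities at $y_{t,x}$ and $y_{s,x}$ cancels $u$ and reduces everything to $A_\tau$. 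What you gain is brevity and a single inequality; what the paper gains is an explicit smooth ODE for the approximants, which it reuses later. One technical point you skate over: the Hessian bound $|D^2_yA_\tau(x,\cdot)|\leq K/\tau$ is only asserted on the cone $S_\lambda(x,t_\lambda)$ of Lemma~\ref{lem:C2}, and for $\tau$ near $s\ll t$ the point $y_{t,x}$ (with $|y_{t,x}-x|\leq\lambda_0 t$) need not lie in that cone. This is not fatal---restricting first to $t/2\leq s\leq t$ keeps both $y_{s,x}$ and $y_{t,x}$ inside $S_{2\lambda_0}$ for every $\tau\in[s,t]$, and the resulting uniform local Lipschitz constant then telescopes to a global one, using also $|y_{t,x}-x|\leq\lambda_0 t$ for continuity at $0$---but it deserves a sentence.
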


Hereafter, we  refer to the arc $\mathbf{x}$ defined in \eqref{eq:curve_max} as the {\em intrinsic characteristic} from $x$. Notice that $t_0$ is independent of the initial point. Thus, when $x\in\SING$, Proposition \eqref{pro:y}  yields global propagation of singularities.

The reader can compare to the idea of the proof---that we outline below---to the argument used to deduce the propagation of the $C^1$ singular support  in Section \ref{sec:support}. Suppose $x\in\SING$ but $y_{t,x}\not\in\SING$ ($0<t\leqslant t_0$). Applying  Fermat's rule, we have that $D_yA_t(x,y_{t,x})=Du(y_{t,x})$. Invoking Proposition \ref{reachable_grad_and_backward} and the differentiability property of the fundamental solution for small time, we conclude that there exist two minimal curves. One is the backward calibrated curve $\gamma_{t,x}:(-\infty,t]\to\R^n$ such that $\gamma_{t,x}(t)=y_{t,x}$ and $Du(y_{t,x})=L_v(\gamma_{t,x}(t),\dot{\gamma}_{t,x}(t))$. The other one is the unique minimal curve $\xi_{t,x}\in\Gamma^t_{x,y_{t,x}}$. Thus
\begin{align*}
	L_v(\xi_{t,x}(t),\dot{\xi}_{t,x}(t))=D_yA_t(x,y_{t,x})=Du(y_{t,x})=L_v(\gamma_{t,x}(t),\dot{\gamma}_{t,x}(t)).
\end{align*}
It follows that $\gamma_{t,x}$ and $\xi_{t,x}$ coincide on $[0,t]$ since both of them are extremal curves for the action functional in \eqref{eq:action} and satisfy the same endpoint conditions at $y_{t,x}$. This leads to a contradiction since we suppose $y_{t,x}\not\in\SING$. This argument shows the stronger result that any $x$ in  the \emph{cut locus} of $u$, $\CUT$, is the initial point of a singular arc which remains singular at all times. We will emphasise this point in the next section.

Now, we want to give a new proof of the Lipschitz continuity of  intrinsic  characteristics, by a reasoning that seems more natural than the method we used in \cite{Cannarsa_Cheng3}. This proof is based on the combination of the approximation argument used in \cite{Cannarsa_Yu2009} and regularity of the fundamental solution (Lemma \ref{lem:C2}).

First, suppose $\phi\in C^2(\Omega)$ with $\Omega$ a  bounded  open subset of $\R^n$. Fix $x\in\Omega$. By following the approach in \cite{Cannarsa_Cheng3}, we have that there exists $t_0>0$ such that for any $t\in(0,t_0]$ the strictly concave function $\phi(\cdot)-A_t(x,\cdot)$ attains its maximum at a unique point $y(t)$. In other words, the curve $y$ satisfies the equation
\begin{equation}\label{eq:gc_eq}
	F(t,y(t))
	=0,\quad t\in(0,t_0),
\end{equation}
where $F(t,y):=D\phi(y)-D_yA_t(x,y)$ is of class $C^1$. 
Let $\xi_{t,y}\in\Gamma^t_{x,y}$ be the unique minimizer for the functional defining $A_t(x,y)$. Recall that  
$$D_yA_t(x,y)=L_v(\xi_{t,y}(t),\dot{\xi}_{t,y}(t))=:p(t,y)\quad\mbox{and}\quad D_tA_t(x,y)=-H(y,p(t,y)).$$ 
Then, $D_yp(t,y)=D^2_{y}A_t(x,y)$ and
\begin{align*}
	D_tF(t,y)=&\,-D_tD_yA_t(x,y)=-D_yD_tA_t(x,y))\\
	=&\,D_xH(y,p(t,y))+D_pH_p(y,p(t,y))D_yp(t,y),\\
	D_yF(t,y)=&\,D^2\phi(y)-D^2_yA_y(x,y)=D^2\phi(y)-D_yp(t,y).
\end{align*}
Thus, by differentiating \eqref{eq:gc_eq} with respect to $t$ we obtain
\begin{align*}
	\big(D^2_{y}A_t(x,y(t))
	-D^2\phi(y(t))\big)\dot{y}(t)=D_xH(y(t),p(t,y(t)))+D_pH(y(t),p(t,y(t)))D_yp(t,y(t)).
\end{align*}
Notice that $D^2_{y}A_t(x,y)-D^2\phi(y)$ is invertible since
\begin{align*}
	D^2_{y}A_t(x,y)-D^2\phi(y)>\Big(\frac{C_2}t-C_1\Big)I>\frac{C_3I}t\,,\quad 0<t\leqslant t_0.
\end{align*}
Set $B(t,y)=\big(D^2_{y}A_t(x,y)-D^2\phi(y)\big)^{-1}$. Then $B(t,y)$ is positive definite and  $B(t,y)< \frac {tI}{C_3}$. 
So,  
\begin{equation}
	\dot{y}(t)=B(t,y(t))\big(D_xH(y(t),p(t,y(t)))+D_pH(y(t),p(t,y(t)))D^2_{y}A_t(x,y(t))\big).
\end{equation}
By Lemma 3.3 in \cite{Cannarsa_Cheng3} we have that $\{\dot{\xi}_{t,y}\}_{(0,t_0]}$ is an equi-Lipschitz family. Hence, 
$$|D_xH(y(t),p(t,y(t)))+D_pH(y(t),p(t,y(t)))D^2_{y}A_t(x,y(t))|\leqslant C_4\,,$$  
where $C_4$ at most depends on $\text{Lip}\,(\phi)$.
Therefore, we conclude that
\begin{equation}\label{eq:equi_Lip}
	|\dot{y}(t)|\leqslant \frac{C_4}{C_3}\,t,\quad t\in(0,t_0].
\end{equation}

Now, suppose $u:\overline\Omega\to\R$ is a  Lipschitz  semiconcave solution of
\begin{equation}\label{eq:HJ_app}
	H(x,Du(x))=0,\quad x\in\Omega,
\end{equation}
satisfying
\begin{align*}
	|u(x)|\leqslant C_0,\quad |Du(x)|\leqslant C_1,\quad D^2u(x)\leqslant C_2I_n,\quad x\in\overline{\Omega}.
\end{align*}
Take any sequence of $C^\infty$-functions $\{u_m\}$ such that
\begin{equation}\label{eq:equi_app}
	|u_m(x)|\leqslant C_0,\quad |Du_m(x)|\leqslant C_1,\quad D^2u_m(x)\leqslant C_2I_n,\quad\forall x\in\overline{\Omega},
\end{equation}
converging uniformly to $u$ on $\overline{\Omega}$ as $m\to\infty$ (for instance, the sequence given by Lemma~\ref{lem:approximation}). As was observed above, the sequence of curves
\begin{equation}\label{eq:y_m}
	y_m(t)=
	\begin{cases}
		\arg\max\big\{u_m(y)-A_t(x,y\big)\} ,&t\in(0,t_0]\\
		x,& t=0.
	\end{cases}
\end{equation}
is well defined for some $t_0>0$.
\begin{The}
Let $u$ be a Lipschitz and semiconcave solution of \eqref{eq:HJ_app}. Let $y(t)$ be the intrinsic singular characteristic defined on $[0,t_0]$ starting from a given point $x\in\SING$, and let $y_m(t)$ be the curve defined in \eqref{eq:y_m}. Then $\{y_m\}$ converges to $y$ uniformly on $[0,t]$ and $y$ is Lipschitz continuous on $[0,t_0]$.
\end{The}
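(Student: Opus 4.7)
The plan is to combine the equi-Lipschitz estimate \eqref{eq:equi_Lip} (derived just above for smooth data) with a standard Arzel\`a--Ascoli compactness argument, and then to identify the uniform limit by using the uniform convergence $u_m\to u$ together with the uniqueness of the maximizer furnished by strict concavity.

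First I would verify that the estimate \eqref{eq:equi_Lip}, applied to each smooth approximant $u_m$ in place of $\phi$, holds with constants that are independent of $m$. Since \eqref{eq:equi_app} holds uniformly, the bounds $C_0,C_1,C_2$ are common to the family $\{u_m\}$, so the threshold $t_0$ on which $u_m(\cdot)-A_t(x,\cdot)$ is strictly concave can be chosen independently of $m$, and the lower bound $D^2_yA_t(x,y)-D^2u_m(y)>C_3 I/t$, together with the upper bound $B(t,y)<tI/C_3$, is uniform on $(0,t_0]$. The constant $C_4$ controlling $|D_xH(y,p(t,y))+D_pH(y,p(t,y))D^2_yA_t(x,y)|$ depends only on $C_1=\sup_m\text{Lip}(u_m)$ and on the equi-Lipschitz character of the minimizing family $\{\dot{\xi}_{t,y_m(t)}\}$, which is supplied by \cite[Lemma 3.3]{Cannarsa_Cheng3} applied with the uniform Lipschitz bound. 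Inserting these uniform quantities into the derivation of \eqref{eq:equi_Lip} gives
\begin{equation*}
|\dot{y}_m(t)|\leqslant \frac{C_4}{C_3}\,t,\qquad t\in(0,t_0],
\end{equation*}
with $t_0,C_3,C_4$ all independent of $m$.

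Next, because $y_m(0)=x$ and $\{y_m\}$ is equi-Lipschitz on $[0,t_0]$, the family is equibounded and equicontinuous; by the Arzel\`a--Ascoli theorem, any subsequence admits a further subsequence converging uniformly on $[0,t_0]$ to some $\tilde y$. To identify $\tilde y=y$, I would fix $t\in(0,t_0]$: each $y_m(t)$ is the unique maximizer of $u_m(\cdot)-A_t(x,\cdot)$, and since $u_m\to u$ uniformly while $A_t(x,\cdot)$ is continuous on the relevant compact set, $\tilde y(t)$ is a maximizer of $u(\cdot)-A_t(x,\cdot)$. The strict concavity of $u(\cdot)-A_t(x,\cdot)$ on $(0,t_0]$ forces $\tilde y(t)=y(t)$. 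Since every cluster point of $\{y_m\}$ coincides with $y$, the full sequence converges uniformly to $y$ on $[0,t_0]$, and $y$ inherits the Lipschitz constant $(C_4/C_3)t_0$ from the approximants.

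The step I expect to be the main obstacle is precisely checking that all the constants entering \eqref{eq:equi_Lip} are genuinely uniform in $m$. In particular, one must track that the $C^{1,1}$-type bounds \eqref{eq:equi_app} yield a uniform lower bound on $D^2_yA_t-D^2u_m$ over a common time interval $(0,t_0]$ (this uses Lemma \ref{lem:C2} together with the uniform semiconcavity $D^2u_m\leqslant C_2 I_n$), and that the minimizers $\xi_{t,y_m(t)}$ admit uniform Lipschitz estimates. Once this uniformity is secured, the compactness argument and the identification of the limit reduce to a standard continuity-of-argmax argument under uniform convergence.
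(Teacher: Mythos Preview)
Your proposal is correct, but it takes a softer route than the paper's proof. You argue via Arzel\`a--Ascoli compactness plus identification of the limit through continuity of the argmax under uniform convergence; the paper instead produces a direct quantitative inequality. Using that $y_m(t)$ maximizes $u_m(\cdot)-A_t(x,\cdot)$, the semiconcavity of $u$ (with $p_t:=D_yA_t(x,y(t))\in D^+u(y(t))$), and the uniform convexity of $A_t(x,\cdot)$ with constant $C_3/t$, the paper obtains
\[
\Big(\frac{C_3}{t}-C_2\Big)\,|y_m(t)-y(t)|^2\;\leqslant\;2\|u_m-u\|_\infty,\qquad t\in(0,t_0],
\]
which gives an explicit convergence rate uniform in $t$; the equi-Lipschitz estimate \eqref{eq:equi_Lip} is then invoked only to pass the Lipschitz bound to the limit $y$. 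The paper even remarks, just before its proof, that your continuity-of-argmax argument is the ``straightforward'' way to see the result, and that the quantitative computation is given precisely to obtain the rate. So your argument is fine and arguably more elementary, while the paper's buys a sharp, uniform-in-$t$ estimate of $|y_m(t)-y(t)|$ in terms of $\|u_m-u\|_\infty$ without any subsequence extraction.
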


\begin{Rem}
Since the sequence of functions $u_m(\cdot)-A_t(x,\cdot)$ converges to $u(\cdot)-A_t(x,\cdot)$ uniformly as $m\to\infty$ and the family is equi-Lipschitz, then it is straightforward to see that the (unique) maximizer of $u_m(\cdot)-A_t(x,\cdot)$ converges to the maximizer of $u(\cdot)-A_t(x,\cdot)$ uniformly with respect to $t$. However, we give a detailed proof of this fact below, in order to establish a precise estimate of the convergence rate.
\end{Rem}

\begin{proof}
Let $p_t=L_v(\xi_{t,y}(t),\dot{\xi}_{t,y}(t))$, then $p_t=D_yA_t(x,y(t))\in D^+u(y(t))$. By the semiconcavity of $u$ and the convexity of the fundamental solution we deduce that
\begin{align*}
	0\leqslant&\,[u_m(y_m(t))-A_t(x,y_m(t))]-[u_m(y(t))-A_t(x,y(t))]\\
	=&\,[u_m(y_m(t))-u_m(y(t))]-[A_t(x,y_m(t))-A_t(x,y(t))]\\
	\leqslant&\,[u_m(y_m(t))-u(y_m(t))]
	+[u(y_m(t))-u(y(t))]+[u(y(t))-u_m(y(t))]\\
	&\,-\left[\langle p_t,y_m(t)-y(t)\rangle+\frac {C_3}t|y_m(t)-y(t)|^2\right]\\
	\leqslant&\,[u_m(y_m(t))-u(y_m(t))]+[u(y(t))-u_m(y(t))]+\left(C_2-\frac{C_3}t\right)|y_m(t)-y(t)|^2.
\end{align*}
So,
\begin{align*}
	\left(\frac{C_3}t-C_2\right)|y_m(t)-y(t)|^2\leqslant2\|u_m-u\|_{\infty}.
\end{align*}
Recall that $t_0$ is chosen that that $C_2-C_3/t<0$ for $t\in(0,t_0]$. Recall that the family $\{y_m\}$ is equi-Lipschitz by \eqref{eq:equi_Lip}. This implies $y_m$ converges to $y$ uniformly on $[0,t_0]$. 
\end{proof}

\begin{Rem}\label{rem:Lasry-Lions}
The method used here is closely related to the Lasry-Lions regularization from convex analysis (\cite{Attouch_book,Attouch_Aze1993}) and PDE (\cite{Lasry_Lions1986}). In a weak KAM context, this method was also widely used as an interaction of the positive-negative Lax-Oleinik operators (\cite{Bernard2007,Bernard2010,Bernard2012,Fathi_Zavidovique2010}). The relation between Lasry-Lions regularization and generalized characteristics was also studied in \cite{Chen_Cheng2016,Chen_Cheng_Zhang2018}. This method was applied to minimal homoclinic orbits with respect to the Aubry set (\cite{Cannarsa_Cheng2}).
\end{Rem}

\subsubsection{Dirichlet problem}\label{sec:Dirichlet}

The proof of Proposition \ref{pro:y} actually affords a method to handle various kind of problems for propagation of singularities if the solution can be represented in the form of an inf-convolution. For example, in \cite{CCMW2019}, a global result for the Dirichlet problem was obtained using the above intrinsic approach. 

Consider the Dirichlet boundary-value problem for a first-order Hamilton-Jacobi equation
\begin{equation}\label{eq:HJ_d}
	\begin{cases}
		H(x,Du(x))=0,\quad x\in\Omega,\\
		u\vert_{\partial\Omega}=g.
	\end{cases}
\end{equation}
where $\Omega\subset\R^n$ is a bounded Lipschitz domain, $H$ is a Tonelli Hamiltonian, and $g$ is the boundary datum. For any $x,y\in\Omega$ and any $s<t$, we define the set of admissible arcs from $x$ to $y$ as
\begin{align*}
	\Gamma^{s,t}_{x,y}(\Omega)=\{\xi\in W^{1,1}([s,t];\R^n) : \xi(\tau)\in\overline{\Omega},\ \forall \tau\in [s,t]; \xi(s)=x; \xi(t)=y\} .
\end{align*}
For any $x,y\in\Omega$ and $t>0$, we define the fundamental solution $A^{\Omega}_t(x, y)$ relative to $\Omega$, Ma\~n\'e's potential $\Phi^{\Omega}_L(x,y)$ relative to $\Omega$, and critical value $c_{\Omega}(L)$ relative to $\Omega$  by
\begin{align*}
A^{\Omega}_t(x,y)&:=\inf_{\xi\in\Gamma^{0,t}_{x,y}(\overline{\Omega})}\int^t_0L(\xi(s),\dot{\xi}(s))\ ds,\\
\Phi^{\Omega}_L(x,y)&:=\inf_{t>0}A_t^{\Omega}(x,y),\quad c_{\Omega}(L):=-\inf_{t>0,x\in\overline{\Omega}}\frac 1tA^{\Omega}_t(x,x).
\end{align*}

Let $u$ be the value function of the following problem:
\begin{equation}\label{eq:represent_formulae2}
	u(x)=\inf_{y\in\partial\Omega}\big\{g(y)+\Phi^{\Omega}_L(y,x)\big\},\quad x\in\overline{\Omega},
\end{equation}
where $g:\partial\Omega\to\R$ is a  continuous function satisfying the compatibility condition
\begin{equation}\label{eq:supcritical1i}
	g(x)-g(y)\leqslant\Phi^{\Omega}_L(y,x),\quad \forall x,y\in\partial\Omega.
\end{equation}
Observe that the function $u$ given by \eqref{eq:represent_formulae2} is the value function of an optimal exit problem  (see, for instance, \cite{Bardi_Capuzzo-Dolcetta1997}) and  a viscosity solution of \eqref{eq:HJ_d}. The following result can be regarded as an extension of Proposition~\ref{pro:CMS2015}.

\begin{Pro}[\cite{CCMW2019}]
Suppose the energy condition
\begin{align*}
	c_{\Omega}(L)<0.
\end{align*}
Let $x_0\in\CUT$. Then, the following alternative holds:
	\begin{enumerate}[\rm (a)]
	\item either there exists a generalized characteristic $\mathbf{x}:[0,+\infty)\to\Omega$ starting from $\mathbf{x}(0)=x_0$ such that $\mathbf{x}(s)\in\SING$ for all $s\in[0,+\infty)$,
	\item or there exist $T>0$ and a generalized characteristic $\mathbf{x}:[0,T)\to\Omega$ starting from $\mathbf{x}(0)=x_0$ such that $\mathbf{x}(s)\in\SING$ for all $s\in[0,T)$, and a sequence of positive real numbers $\{s_k\}$ such that
	$$
	\lim_{k\to\infty}s_k=T,\quad\text{and}\quad\lim_{k\to\infty}d_{\partial\Omega}(\mathbf{x}(s_k))=0.
	$$
	\end{enumerate}	
\end{Pro}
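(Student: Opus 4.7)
The natural strategy is to adapt the intrinsic construction of Section~\ref{sec:intrinsic} to the Dirichlet setting using the relative fundamental solution $A^{\Omega}_t$ in place of $A_t$. First, I would fix $x_0\in\CUT$ and a compact set $K\subset\Omega$ with $x_0$ in its interior. The semiconcavity of $u$ together with the quantitative convexity estimate $D^2_y A^{\Omega}_t(x,y)\geqslant (C_3/t)I$ (valid for $(t,y)$ in a cone $S_\lambda(x,t_\lambda)$ by Lemma~\ref{lem:C2} applied in the interior, since the Dirichlet constraint is inactive there) imply that, for all $x\in K$ and all $t\in(0,t_K]$, the function $y\mapsto u(y)-A^{\Omega}_t(x,y)$ is strictly concave on $B(x,\lambda t)$, and the Lipschitz bound on $u$ forces any maximizer to lie inside this ball. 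This yields a unique maximizer $y_{t,x}\in\Omega$, so that the prescription $\mathbf{x}(t):=y_{t,x}$ defines a locally Lipschitz arc issued from $x_0$, with Lipschitz constant controlled exactly as in the new proof at the end of Section~\ref{sec:intrinsic}.

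Second, I would show that $\mathbf{x}(t)\in\CUT$ for every $t\in[0,t_K]$. Arguing by contradiction, if $y_{t,x_0}\notin\SING$, Fermat's rule at the interior maximizer gives
\[
D_yA^{\Omega}_t(x_0,y_{t,x_0})=Du(y_{t,x_0}).
\]
The gradient on the right, by Proposition~\ref{reachable_grad_and_backward} adapted to the representation \eqref{eq:represent_formulae2}, is the terminal momentum of a $u$-calibrated minimizer $\gamma$ arriving at $y_{t,x_0}$ from a boundary point $y^*\in\partial\Omega$ where the infimum in \eqref{eq:represent_formulae2} is attained. The gradient on the left is the terminal momentum of the unique interior minimizer $\xi\in\Gamma^{0,t}_{x_0,y_{t,x_0}}(\Omega)$ for $A^{\Omega}_t(x_0,y_{t,x_0})$. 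Matching terminal positions and momenta forces $\xi$ to be the restriction of $\gamma$; but then concatenating the rest of $\gamma$ before $\xi$ gives an optimal curve from $y^*$ to $y_{t,x_0}$ that passes through $x_0$, and this witnesses $x_0\notin\CUT$, a contradiction. Hence $y_{t,x_0}\in\CUT$, and the same reasoning applied at each base point gives $\mathbf{x}(s)\in\CUT\subset\SING$ throughout $[0,t_K]$.

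Third, I would iterate the construction: starting from $\mathbf{x}(t_{K_0})\in\CUT$, I re-apply the local step with a compact set $K_1\subset\Omega$ chosen around the new base point. As long as $\mathbf{x}(\cdot)$ stays in a fixed compact subset of $\Omega$, the local existence time is bounded below by a constant depending only on $\mathrm{dist}(\mathbf{x},\partial\Omega)$, so the construction extends to $[0,+\infty)$ and alternative (a) holds. Otherwise, let $T<+\infty$ be the supremum of times up to which iteration is possible; then by construction there must be a sequence $s_k\to T^-$ with $d_{\partial\Omega}(\mathbf{x}(s_k))\to 0$, which is alternative (b). That the resulting $\mathbf{x}$ is a generalized characteristic in the sense of \eqref{intro:gc} follows by passing to the limit in the approximation argument of Lemma~\ref{lem:approximation}, exactly as at the end of Section~\ref{sec:intrinsic}.

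The main obstacle is locating precisely where the energy condition $c_{\Omega}(L)<0$ enters. It is needed to ensure that Mañé's potential $\Phi^{\Omega}_L$ is finite and that the infimum in \eqref{eq:represent_formulae2} is attained at some $y^*\in\partial\Omega$ by a genuine minimizing curve of finite time, so that $Du(y_{t,x_0})$ can be realized as the terminal momentum of a calibrated curve — the mechanism underlying the contradiction in the second step. In addition, $c_{\Omega}(L)<0$ is what prevents the maximizer $y_{t,x}$ from being pushed to $\partial\Omega$ in the interior construction and keeps $u$ strictly below the boundary trace along optimal trajectories, which in turn is what allows the second alternative to be formulated purely in terms of $d_{\partial\Omega}(\mathbf{x}(s_k))\to 0$ rather than of some degeneracy of the characteristic equation. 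Sharpening these two points and controlling the uniform dependence of $t_K$ on $\mathrm{dist}(K,\partial\Omega)$ is the delicate part of the argument.
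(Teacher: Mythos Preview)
Your approach is essentially the one the paper attributes to \cite{CCMW2019}: the survey does not reproduce a proof here but says explicitly that the intrinsic method of Proposition~\ref{pro:y} (the sup-convolution with the fundamental solution, Fermat's rule plus the calibration argument, then iteration) carries over to the Dirichlet setting. Your three steps match this scheme, and your reading of where $c_\Omega(L)<0$ enters---finiteness of $\Phi^\Omega_L$ and attainment of the infimum in \eqref{eq:represent_formulae2} by a finite-time boundary minimizer, which is what makes the calibration contradiction go through---is correct.

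One slip to fix: you write $\CUT\subset\SING$, but the inclusion is $\SING\subset\CUT$. This does not damage the argument, since your contradiction in step two already delivers $y_{t,x_0}\in\SING$ directly; the correct inclusion $\SING\subset\CUT$ is then precisely what lets you take $y_{t,x_0}$ as the new base point and iterate. A second loose end: that the intrinsic arc is actually a \emph{generalized} characteristic (i.e.\ satisfies \eqref{intro:gc}) does not follow from the approximation of Lemma~\ref{lem:approximation} in the way you suggest---the curves $y_m$ in \eqref{eq:y_m} are sup-convolution maximizers, not solutions of the smooth characteristic ODE used in Proposition~\ref{pro:CY}. One shows it instead by establishing $\dot{\mathbf{x}}^+(t)=H_p(\mathbf{x}(t),p(t))$ with $p(t)\in D^+u(\mathbf{x}(t))$ at every $t$, as in \cite{Cannarsa_Cheng3,CCMW2019}.
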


To exclude the case that the singularities hit the boundary we need more conditions on $\partial \Omega$. 
We shall suppose the following, where we denote $\partial \Omega$ by $\Gamma$:
\begin{enumerate}[(\bf{G}1)]
    \item  there exists $\nu\in[0,1)$ such that $g(y_1)-g(y_2)\leqslant \nu\Phi^{\Omega}_L(y_2,y_1)$, $\forall y_1,\, y_2\in\partial \Gamma$;
  \item there exists $G\in C^{1,1}(\Gamma_\delta)$ for some $\delta>0$ such that $g=G|_\Gamma$ and  
  \begin{align}\label{g2}
  \langle \nabla G(x),x-y\rangle\leqslant \breve{C}|x-y|^2\qquad\forall x,y\in\Gamma
  \end{align}
  for some $\breve{C}>0$, where  $\Gamma_\delta$ denotes the $\delta$-neighborhood of $\Gamma$.
\end{enumerate}

\begin{Pro}
Let $\Omega\subset\R^n$ be a bounded domain with $C^2$ boundary, let $L$ be a Tonelli Lagrangian satisfying  $L\geqslant\alpha >0$ and let $g$ satisfy ({\bf G1}),({\bf G2}). If $x_0\in\CUT$, then there exists a generalized characteristic $\mathbf{x}:[0,+\infty)\to\Omega$ starting from $\mathbf{x}(0)=x_0$ such that $\mathbf{x}(s)\in\SING$ for all $s\in[0,+\infty)$.
\end{Pro}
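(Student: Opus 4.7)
The strategy is to apply the preceding proposition, which yields the alternative (a)/(b) for the intrinsic singular characteristic starting at $x_0\in\CUT$, and then to rule out case (b) under the additional hypotheses $L\geqslant\alpha>0$, (\textbf{G1}), (\textbf{G2}) and $\partial\Omega\in C^2$. So, assume by contradiction that case (b) holds: there exist $T<\infty$, a generalized characteristic $\mathbf{x}:[0,T)\to\Omega$ with $\mathbf{x}(0)=x_0$, $\mathbf{x}(s)\in\SING$ for all $s\in[0,T)$, and a sequence $s_k\uparrow T$ with $d_{\partial\Omega}(\mathbf{x}(s_k))\to 0$.

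The core of the argument is a boundary regularity claim: there exists $\delta_0\in(0,\delta)$ such that $u\in C^{1,1}(\Omega\cap\Gamma_{\delta_0})$; in particular $(\SING)\cap\Gamma_{\delta_0}=\varnothing$. Granting this, $\mathbf{x}(s_k)\in\Gamma_{\delta_0}$ for $k$ large, contradicting $\mathbf{x}(s_k)\in\SING$. To prove the claim, I would exploit the representation \eqref{eq:represent_formulae2}, showing that for $x\in\Omega$ sufficiently close to $\partial\Omega$ the infimum is attained only at boundary points close to $x$ and is realized by short-time arcs, so that one may replace $\Phi^\Omega_L(y,x)$ by $A_t^\Omega(y,x)$ for $t$ small, where the action has the $C^2$ regularity provided by Lemma~\ref{lem:C2}. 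The ingredients are the following: using $L\geqslant\alpha>0$, any minimizing sequence $y_k\in\partial\Omega$ with $\Phi^\Omega_L(y_k,x)$ bounded must satisfy $|y_k-x|\to 0$ at a rate controlled by $d_{\partial\Omega}(x)$; using (\textbf{G1}) with $\nu<1$, the competition between $g(y)$ and $\Phi^\Omega_L(y,x)$ is strict, so the minimum in \eqref{eq:represent_formulae2} is \emph{uniquely} attained at some $y(x)\in\partial\Omega$; the $C^2$ boundary together with (\textbf{G2}) then allows one to apply the implicit function theorem to the Euler--Lagrange/first-order optimality condition on $\partial\Omega$ for the parameter $x$, giving $x\mapsto y(x)$ of class $C^{1,1}$ on $\Omega\cap\Gamma_{\delta_0}$.

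Once uniqueness and smooth dependence of $y(x)$ are established, one obtains simultaneously the representation $u(x)=g(y(x))+A^\Omega_{\tau(x)}(y(x),x)$ for some short time $\tau(x)>0$, with $\tau,y$ both $C^{1,1}$. Then $u$ is locally semiconvex in $\Omega\cap\Gamma_{\delta_0}$ (because it is the value, along a single smooth family of curves, of a smooth functional bounded below by a quadratic) while already being semiconcave there; Proposition~\ref{basic_facts_of_superdifferential}(e) then yields $u\in C^{1,1}(\Omega\cap\Gamma_{\delta_0})$, establishing the claim.

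The hard point is the uniqueness of the boundary minimizer $y(x)$ and the implicit function step that follows. Condition (\textbf{G1}), combined with the strict positivity $L\geqslant\alpha$, is precisely what forces a quantitative strict convexity of $y\mapsto g(y)+\Phi^\Omega_L(y,x)$ restricted to the $C^2$ manifold $\partial\Omega$ near $x$, and this is the delicate estimate to make rigorous; the $C^{1,1}$ extension $G$ from (\textbf{G2}) and inequality \eqref{g2} enter here to control the gradient of $g$ along $\partial\Omega$. Once this strict convexity is in hand, everything else follows from Lemma~\ref{lem:C2} and standard regularity arguments, and the alternative (b) is ruled out, completing the proof.
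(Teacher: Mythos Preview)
The paper is a survey and does not include a proof of this proposition; it is quoted from \cite{CCMW2019}. So there is no in-paper argument to compare against, and I can only assess your plan on its own merits and against what one would reasonably expect the original proof to do.

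Your global strategy is the natural one and almost certainly matches \cite{CCMW2019}: invoke the preceding alternative, and exclude case~(b) by proving that $u$ has no singular points in a uniform collar $\Omega\cap\Gamma_{\delta_0}$ of the boundary. The conclusion ``$u\in C^{1,1}$ near $\partial\Omega$, hence $\SING\cap\Gamma_{\delta_0}=\varnothing$'' is exactly the right target, and the semiconcave/semiconvex $\Rightarrow C^{1,1}$ step via Proposition~\ref{basic_facts_of_superdifferential}(e) is standard.

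Where your plan is shaky is the mechanism you propose for the boundary $C^{1,1}$ regularity. You want to deduce \emph{uniqueness} of the boundary minimiser $y(x)$ from ({\bf G1}), then run an implicit function theorem to get $x\mapsto (y(x),\tau(x))$ of class $C^{1,1}$, and read off semiconvexity of $u$ from the single smooth branch. Two issues:
\begin{itemize}
\item Condition ({\bf G1}) is a strengthened \emph{compatibility} (Lipschitz-type) inequality, not a convexity condition. From $g(y_1)-g(y_2)\leqslant\nu\,\Phi^{\Omega}_L(y_2,y_1)$ and the triangle inequality for $\Phi^{\Omega}_L$ you do not obtain strict convexity of $y\mapsto g(y)+\Phi^{\Omega}_L(y,x)$ along $\partial\Omega$, nor uniqueness of the minimiser, without additional input. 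Your sentence ``({\bf G1}) \dots\ is precisely what forces a quantitative strict convexity'' is an assertion, not an argument, and I do not see how to make it one.
\item Even granting uniqueness, the implicit function theorem step needs $g$ (or its extension $G$) to be $C^2$ along $\partial\Omega$, whereas ({\bf G2}) only gives $G\in C^{1,1}$. So the IFT route would at best yield Lipschitz dependence of $y(x)$, not $C^{1,1}$, and you would still have to argue semiconvexity of $u$ separately.
\end{itemize}

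A more robust line---and the one the hypotheses are tailored to---is to prove semiconvexity of $u$ near $\partial\Omega$ \emph{directly}, without first establishing uniqueness of $y(x)$. The ingredients point that way: ({\bf G2}) supplies a $C^{1,1}$ extension $G$ together with the one-sided inequality \eqref{g2}, which is exactly a semiconcavity-type bound for $g$ along $\Gamma$; ({\bf G1}) with $\nu<1$ guarantees that for $x$ close to $\partial\Omega$ the infimum in \eqref{eq:represent_formulae2} is realised only by nearby boundary points reached in short time (here $L\geqslant\alpha>0$ is used), so one may work with $A^{\Omega}_t(y,x)$ for $t$ small and invoke its uniform $C^{1,1}$/convexity properties from Lemma~\ref{lem:C2}. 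Combining a lower quadratic bound for $A^{\Omega}_t(y,\cdot)$ with the control on $g$ from ({\bf G2}) one obtains a uniform semiconvexity estimate for $u$ in $\Omega\cap\Gamma_{\delta_0}$, and then semiconcavity plus semiconvexity finishes as you say. I would reorganise your plan around this direct semiconvexity estimate rather than the uniqueness/IFT detour.
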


\begin{Rem}
We note that the energy condition  $c_{\Omega}(L)<0$  (which is implicitly assumed even in the above proposition as a consequence of  the hypothesis $L\geqslant\alpha >0$) ensures that any optimal curve touches the boundary  in finite time in the associated optimal exit time problem. On the other hand, the case of $c_{\Omega}(L)=0$ is still open, especially the analysis of the  Aubry set on the boundary. For a state constrained problem,  weak KAM aspects of the boundary behaviour of solutions were studied in \cite{CCMW2020}. 
\end{Rem}

\subsection{Topology of $\SING$ and $\CUT$}

Recall the homotopy equivalence between a bounded open subset $\Omega\subset\R^n$ and the singular set of the distance function $d_{\Omega}$ discussed in Section \ref{sec:gg} is based on a global propagation result for the generalized gradient flow. It is quite natural to use the global result in the last section to study the similar problem in the weak KAM setting.

\subsubsection{Aubry set and cut locus}

Let $M$ be compact and $u$ be a weak KAM solution of \eqref{eq:HJ_s}. We define the projected Aubry set $\mathcal{I}(u)$ of $u$ as the subset of $M$ such that $x\in\mathcal{I}(u)$ if there exists a $u$-calibrated curve $\gamma:(-\infty,+\infty)\to M$ passing though $x$. We also define the cut locus of $u$, denoted by $\CUT$, as the set of points $x\in M$ where no backward $u$-calibrated curve ending at $x$ can be extended to a $u$-calibrated curve beyond $x$. In general we have the following inclusions:
\begin{align*}
	\SING\subset\CUT\subset M\setminus\mathcal{I}(u),\quad \SING\subset\CUT\subset \overline{\SING}.
\end{align*}

Using the construction in the last section, one can obtain a continuous homotopy $F:M\times[0,t]\to M$, $t>0$, with the following properties:
\begin{enumerate}[(a)]
	\item for all $x\in M$ we have $F(x,0)=x$;
	\item if $F(x,s)\not\in\SING$ for some $s>0$ and $x\in M$, then the curve $\sigma\mapsto F(x,\sigma)$ is $u$-calibrating on $[0,s]$;
	\item if there exists a $u$-calibrated curve $\gamma:[0,s]\to M$ with $\gamma(0)=x$, then $\sigma\mapsto F(x,\sigma)=\gamma(\sigma)$ for every $\sigma\in[0,\min\{s,t\}]$.
\end{enumerate}

\begin{Pro}[\cite{Cannarsa_Cheng_Fathi2017}]\label{pro:homotopy}
The inclusions 
$$\SING\subset\CUT\subset (M\setminus\mathcal{I}(u))\cap\overline{\SING}\subset M\setminus\mathcal{I}(u)$$ are all homotopy equivalences. As a consequence, for every connected component $C$ of $M\setminus\mathcal{I}(u)$ the three intersections $\SING\cap C$, $\CUT\cap C$, and $\overline{\SING}\cap C$ are path connected.
\end{Pro}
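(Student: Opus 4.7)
The plan is to prove all three inclusions to be homotopy equivalences by building, from the homotopy $F:M\times[0,t]\to M$ given just above, a single deformation retraction of $M\setminus\mathcal{I}(u)$ onto $\SING$ that factors through $\CUT$ and $(M\setminus\mathcal{I}(u))\cap\overline{\SING}$. A preliminary step is to verify that $F_s$ preserves the relevant subsets. That $F_s$ sends $\SING$ into $\SING$ is the contrapositive of property (b), together with the fact that a smooth point admits a unique backward calibration (Proposition~\ref{reachable_grad_and_backward}); that $F_s$ sends $M\setminus\mathcal{I}(u)$ into itself follows because a bi-infinite calibration through $F(x,s)$, spliced with the forward calibration $\sigma\mapsto F(x,\sigma)$ from property (c), would produce a bi-infinite calibration through $x$, contradicting $x\notin\mathcal{I}(u)$.

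Since forward calibrations may be arbitrarily long, the homotopy $F$ on $[0,t]$ alone does not suffice. The plan is to iterate $F$ by concatenation, defining $\widetilde F:M\times[0,+\infty)\to M$ by $\widetilde F(x,\sigma+kt)=F(\widetilde F(x,kt),\sigma)$ for $\sigma\in[0,t]$ and $k\in\N$. This yields a continuous semi-flow inheriting properties (a)--(c) on every compact sub-interval. For $x\in M\setminus\mathcal{I}(u)$, introduce the hitting time
$$\theta(x)=\inf\{\sigma\geqslant 0:\widetilde F(x,\sigma)\in\overline{\SING}\}.$$
The heart of the proof is to show $\theta(x)<+\infty$: if the orbit stayed in $M\setminus\overline{\SING}$ for all $\sigma\geqslant 0$, iterating property (b) would produce a forward $u$-calibrated ray from $x$, whose $\omega$-limit set must lie in the projected Aubry set. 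Splicing this ray with a bi-infinite calibration through any accumulation point --- available since the Aubry set is invariant under the calibrated flow --- would again yield a bi-infinite calibration through $x$, contradicting $x\notin\mathcal{I}(u)$.

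With $\theta$ available, define $R:(M\setminus\mathcal{I}(u))\times[0,1]\to M\setminus\mathcal{I}(u)$ by $R(x,s)=\widetilde F(x,s\,\theta(x))$. Then $R(x,0)=x$, $R(x,1)\in(M\setminus\mathcal{I}(u))\cap\overline{\SING}$, and $R$ is the identity on the target, establishing the homotopy equivalence (iii). Repeating the construction with the hitting time into $\CUT$ or into $\SING$ in place of $\overline{\SING}$ handles the inclusions (i) and (ii). For the consequence on path connectedness, each connected component $C$ of $M\setminus\mathcal{I}(u)$ is already path connected as an open subset of a manifold; the orbit $\sigma\mapsto\widetilde F(x,\sigma)$ is a continuous curve in $M\setminus\mathcal{I}(u)$ starting at $x$, hence remains in $C$, so the retractions restrict to each component and transport path connectedness to $C\cap\SING$, $C\cap\CUT$, and $C\cap\overline{\SING}$.

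The main obstacle I anticipate is the continuity of $R$. The function $\theta$ is only upper semicontinuous in general --- it may drop as $x$ varies, if the orbit of a nearby point enters $\overline{\SING}$ earlier --- so direct substitution of $\theta$ into the flow need not yield a continuous map. The standard cure is to argue that the jumps of $\theta$ occur precisely at points where the orbit enters $\overline{\SING}$ immediately, so that the composition $R(x,s)=\widetilde F(x,s\,\theta(x))$ remains continuous even though $\theta$ is not; this requires a delicate local analysis of how orbits approach $\overline{\SING}$. A secondary subtlety, which I flagged above, is the weak KAM invariance argument ruling out forward-infinite calibrations from points outside $\mathcal{I}(u)$ that avoid $\overline{\SING}$ entirely --- this relies on properties of the Aubry set going beyond the bare three properties of $F$.
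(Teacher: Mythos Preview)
Your overall architecture—iterate $F$ to a semi-flow $\widetilde F$ on $[0,+\infty)$ and reparameterise in time to push $M\setminus\mathcal{I}(u)$ into $\SING$—matches the paper's, but two steps are broken. The finiteness argument via $\omega$-limits fails: a forward calibrated ray only \emph{accumulates} on the Aubry set, so you cannot glue it at a limit point to manufacture a bi-infinite calibration through $x$. The correct argument is shorter and needs no dynamics of the Aubry set: concatenate the forward ray with any backward calibrated curve $(-\infty,0]\to M$ ending at $x$ (always available since $u$ is a weak KAM solution); the concatenation satisfies the calibration identity on every compact subinterval by additivity, hence is itself calibrated, so $x\in\mathcal{I}(u)$—contradiction. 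This is exactly the statement that the cut time $\tau$ is finite off $\mathcal{I}(u)$.

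The continuity issue is more serious, and your diagnosis is inverted: the hitting time to a \emph{closed} set along a continuous flow is \emph{lower} semicontinuous, not upper, so the danger is that $\theta$ jumps \emph{up}, and your proposed cure does not address this. The paper's route avoids hitting times altogether. The contrapositive of property~(b) gives $\widetilde F(x,s)\in\SING$ for \emph{every} $s>\tau(x)$, not merely at one instant. Since $\tau$ is upper semicontinuous and finite on $M\setminus\mathcal{I}(u)$, one can choose a \emph{continuous} function $\alpha>\tau$; then $G(x,s)=\widetilde F(x,s\alpha(x))$ is a continuous homotopy with $G(\cdot,1)$ landing in $\SING$ and, by forward invariance of the flow, preserving each of $\SING$, $\CUT$ and $(M\setminus\mathcal{I}(u))\cap\overline{\SING}$, which yields all three homotopy equivalences at once. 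This is precisely the ``continuous interpolation of the cut time function $\tau$'' the paper sketches immediately after the proposition; the exit-time bound $\eta_O$ is needed only for the local contractibility statement that follows. Your final paragraph on path connectedness is fine once this deformation is available.
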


Similar to the homotopy constructed above, for any open subset $O\subset M\setminus\mathcal{I}(u)$, we can also construct a local homotopy by $G_O(x,s)=F(x,s\alpha_O(x))$, where $\alpha_O$ is continuous interpolation of the cut time function $\tau:M\to[0,+\infty]$ (the supremum of the time $t\geqslant0$ such that there exists a $u$-calibrated curve $\gamma:[0,t]\to M$, with $\gamma(0)=x$) and the local exit function $\eta_O:O\to[0,+\infty]$ defined by $\eta_O(x)=\sup\{t\in[0,+\infty): F(x,s)\in O,\ \text{for all}\ s\in[0,t]\}$. Notice $\tau<\eta_O$ on an open subset of $O$, and $\tau$ is upper semicontinuous, $\eta_O$ is lower semicontinuous.

\begin{Pro}[\cite{Cannarsa_Cheng_Fathi2017}]
The spaces $\SING$ and $\CUT$ are locally contractible.
\end{Pro}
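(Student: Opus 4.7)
The plan is to adapt to a local setting the global homotopy-equivalence argument used in Proposition~\ref{pro:homotopy}, replacing the homotopy $F$ with the localised version $G_O$ introduced just above. Concretely, given $x_0\in\SING$ (resp.\ $x_0\in\CUT$) and any open neighborhood $W$ of $x_0$ in $M$, I will produce an open set $O$ with $x_0\in O\subset W$ for which $\SING\cap O$ (resp.\ $\CUT\cap O$) is contractible; the family of such sets $O$ then forms a basis of contractible neighborhoods of $x_0$, which is more than enough for local contractibility.

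First I would choose $O$ carefully. Since $x_0\in\CUT\subset M\setminus\mathcal{I}(u)$ and $M\setminus\mathcal{I}(u)$ is open, I take an open Euclidean ball $O\subset W\cap(M\setminus\mathcal{I}(u))$ centered at $x_0$, so that $O$ is contractible. From $x_0\in\CUT$ one has $\tau(x_0)=0$, while $\eta_O(x_0)>0$ by definition of $\eta_O$; the upper semicontinuity of $\tau$ and lower semicontinuity of $\eta_O$ recalled before the statement allow me to shrink $O$ so that $\tau(x)<\eta_O(x)$ for every $x\in O$, and then pick a continuous interpolant $\alpha_O$ with $\tau<\alpha_O<\eta_O$ on $O$ (via a standard insertion theorem for semicontinuous functions).

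Next I would verify that $G_O(x,s)=F(x,s\alpha_O(x))$ restricts to continuous homotopies on the relevant subspaces. The inclusion $G_O(x,s)\in O$ is immediate from $s\alpha_O(x)\leqslant\alpha_O(x)<\eta_O(x)$ together with the definition of $\eta_O$. For $x\in\SING\cap O$ one has $\tau(x)=0$, hence $s\alpha_O(x)>\tau(x)$ for every $s\in(0,1]$; the contrapositive of property~(b) of the homotopy $F$ then forces $F(x,s\alpha_O(x))\in\SING$. Thus $G_O$ restricts to a continuous map
\[
(\SING\cap O)\times[0,1]\longrightarrow \SING\cap O,
\]
equal to the identity at $s=0$. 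The same reasoning works on $\CUT\cap O$ via $\SING\subset\CUT$, and, applied to an arbitrary $x\in O$ using $\alpha_O(x)>\tau(x)$, it also shows that $r:=G_O(\cdot,1)$ is a continuous map $O\to\SING\cap O$.

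To conclude, letting $i:\SING\cap O\hookrightarrow O$ denote the inclusion, I observe that $G_O$ viewed in $O$ is a homotopy from $\mathrm{id}_O$ to $i\circ r$, while $G_O|_{\SING\cap O}$ is a homotopy from $\mathrm{id}_{\SING\cap O}$ to $r\circ i$. Hence $i$ is a homotopy equivalence and $\SING\cap O$, having the homotopy type of the contractible ball $O$, is itself contractible. The argument for $\CUT\cap O$ is identical. The delicate point will be arranging the construction of $\alpha_O$ so that the strict inequalities $\alpha_O>\tau$ (needed for $r$ to land in $\SING$) and $\alpha_O<\eta_O$ (needed to keep $G_O$ inside $O$) hold simultaneously together with continuity; this rests precisely on the semicontinuity properties of $\tau$ and $\eta_O$ and on an appropriate shrinking of $O$, i.e.\ exactly the bookkeeping already used in the construction of $G_O$.
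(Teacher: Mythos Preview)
Your approach is the right one and mirrors the paper's use of the local homotopy $G_O$. However, there is a genuine circularity in the step where you ``shrink $O$ so that $\tau(x)<\eta_O(x)$ for every $x\in O$'': the exit-time function $\eta_O$ depends on $O$ itself, and when you pass to a smaller set $O'\subset O$ you only get $\eta_{O'}\leqslant\eta_O$, so the inequality you gained from semicontinuity need not survive. Concretely, near the portion of $\partial O$ through which the flow $t\mapsto F(x,t)$ exits, $\eta_O$ becomes arbitrarily small while $\tau$ may remain bounded away from zero at points of $O\setminus\CUT$; thus for a ball $O$ the inequality $\tau<\eta_O$ typically fails somewhere in $O$. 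The paper itself records exactly this: it only asserts that ``$\tau<\eta_O$ on an \emph{open subset} of $O$''.

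The fix is to use two nested neighborhoods rather than one. Keep a contractible $O\subset W\cap(M\setminus\mathcal I(u))$, set $U=\{x\in O:\tau(x)<\eta_O(x)\}$ (open, containing $x_0$), choose a continuous $\alpha$ with $\tau<\alpha<\eta_O$ on $U$, and then pick a contractible ball $V\subset U$ around $x_0$. Your own computations then give that $G(x,s)=F(x,s\alpha(x))$ maps $(\SING\cap V)\times[0,1]$ into $\SING\cap O$ and that $r=G(\cdot,1)\colon V\to\SING\cap O$. Since $V$ is contractible, $r$ is null-homotopic in $\SING\cap O$, hence so is the inclusion $\SING\cap V\hookrightarrow\SING\cap O$. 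This is precisely local contractibility in the standard sense (every neighborhood contains a smaller one that contracts inside it); the stronger claim of a basis of \emph{contractible} neighborhoods does not follow from this argument, and you should not assert it.
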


\subsubsection{Singular set on noncompact manifolds}

Let $0<T\leqslant \infty$ and suppose $M$ is a noncompact manifold and $L$ (resp. $H$) is a Tonelli Lagrangian (resp. Hamiltonian). We will review some  topological results for the singular set of a  uniformly continuous viscosity solution of 
\begin{equation}\label{eq:HJ_e_noncompact}
	D_tu+H(x,D_xu)=0\quad\mbox{on}\quad (0,T)\times M,
\end{equation}
which were obtained in \cite{Cannarsa_Cheng_Fathi2019}, together with their applications to Riemannian geometry.

\begin{Pro}[\cite{Cannarsa_Cheng_Fathi2019}]
Let $H:T^*M\to\R$ be a Tonelli Hamiltonian. If $u$
is a continuous viscosity solution of the evolutionary Hamilton-Jacobi equation \eqref{eq:HJ_e_noncompact}, then the set $\SING $ is locally contractible in $(0,T)\times M$.	
\end{Pro}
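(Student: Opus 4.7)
The plan is to adapt the intrinsic singular characteristic construction of Section~\ref{sec:intrinsic} to the evolutionary equation \eqref{eq:HJ_e_noncompact} on the noncompact manifold $M$, and then run the local homotopy argument sketched before Proposition~\ref{pro:homotopy}. Since local contractibility is a purely local property, I may work throughout in a fixed relatively compact coordinate neighborhood of the reference point $(t_0,x_0)\in\SING$, so the noncompactness of $M$ plays no essential role.

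First I would record that, because $H$ is Tonelli and $u$ is a uniformly continuous viscosity solution, $u$ is locally Lipschitz and locally semiconcave in $(0,T)\times M$---this follows from the Lax--Oleinik representation $u(t+s,\cdot)=T_s u(t,\cdot)$ on short intervals. In particular $\SING$ is well defined and $D^+u$ is upper semicontinuous. The semigroup property also yields the positive Lax--Oleinik representation
\begin{equation*}
u(t,x)=\sup_{y\in M}\{u(t+s,y)-A_s(x,y)\}.
\end{equation*}
By Lemma~\ref{lem:C2} and the local semiconcavity of $u(t+s,\cdot)$, the function $y\mapsto u(t+s,y)-A_s(x,y)$ is, for $s\in(0,s_0]$ uniformly on a fixed compact set, strictly concave with negative modulus $C_1-C_2/s$, so it admits a unique maximizer $y_s(t,x)$ depending continuously on $(t,x,s)$. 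Adapting the argument of Proposition~\ref{pro:y} (backward calibration plus uniqueness of the extremal from $x$, via Proposition~\ref{reachable_grad_and_backward}) gives $(t+s,y_s(t,x))\in\SING$ whenever $(t,x)\in\SING$. This produces a continuous space-time flow
\begin{equation*}
\Phi_s(t,x):=(t+s,y_s(t,x)),\qquad \Phi:(\SING\cap W)\times[0,s_0]\to\SING,\ \Phi_0=\mathrm{id},
\end{equation*}
on a fixed neighborhood $W$ of $(t_0,x_0)$.

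Given an open neighborhood $U\subset W$ of $(t_0,x_0)$, I would then mimic the local homotopy construction recalled before Proposition~\ref{pro:homotopy}. The distinguished curve $\gamma(s):=\Phi_s(t_0,x_0)$ is a Lipschitz singular arc through $(t_0,x_0)$. Choose a smaller neighborhood $V\subset U$ and a continuous interpolation $\alpha_U:V\cap\SING\to[0,s_0]$ with $\alpha_U(t_0,x_0)=0$, such that the homotopy
\begin{equation*}
H_1(p,\lambda)=\Phi_{\lambda\,\alpha_U(p)}(p)
\end{equation*}
remains in $U\cap\SING$ for all $(p,\lambda)\in(V\cap\SING)\times[0,1]$, and so that its endpoint $H_1(p,1)$ lands continuously on $\gamma$. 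Composing with a second homotopy $H_2$ that contracts $\gamma$ to $(t_0,x_0)$ along itself then produces the required null-homotopy of $V\cap\SING$ inside $U\cap\SING$.

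The main difficulty will be the precise construction of $\alpha_U$ together with the continuous projection of endpoints onto $\gamma$. This hinges on combining the uniform lower bound $C_2/s-C_1$ on the concavity modulus with the upper semicontinuity of $D^+u$ and the right-differentiability of $\Phi_\cdot(p)$ at $s=0$ (the evolutionary analogue of parts (iii)--(iv) of Proposition~\ref{pro:CY}), so that for $p$ sufficiently close to $(t_0,x_0)$ the trajectory $\Phi_\cdot(p)$ stays uniformly close to $\gamma$ on a common interval, and can therefore be retracted onto $\gamma$ continuously while staying inside $\SING\cap U$. This is exactly the delicate step, entirely analogous to the cut-time/exit-time interpolation argument used in the compact stationary case of \cite{Cannarsa_Cheng_Fathi2017}.
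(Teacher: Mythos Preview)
The survey itself does not supply a proof of this proposition; it merely quotes the result from \cite{Cannarsa_Cheng_Fathi2019}. So there is no ``paper's own proof'' to match against here, and your proposal must be judged on its own terms together with what the survey does sketch (the $G_O$ construction before Proposition~\ref{pro:homotopy}).

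Your construction of the intrinsic space--time flow $\Phi_s(t,x)=(t+s,y_s(t,x))$ via the positive Lax--Oleinik representation $u(t,x)=\sup_y\{u(t+s,y)-A_s(x,y)\}$ is correct, as is the propagation claim $(t,x)\in\SING\Rightarrow\Phi_s(t,x)\in\SING$. The gap is in the contraction step. You propose to choose $\alpha_U$ so that $H_1(p,1)=\Phi_{\alpha_U(p)}(p)$ lands \emph{on the single curve $\gamma$}, and then slide along $\gamma$. There is no mechanism that forces nearby intrinsic characteristics to merge with $\gamma$; confluence of singular characteristics is precisely the uniqueness question that is open in dimension $\geqslant 3$. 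Your fallback---``stays uniformly close to $\gamma$ \dots\ and can therefore be retracted onto $\gamma$ continuously while staying inside $\SING\cap U$''---assumes exactly what has to be proved: that a tubular neighbourhood of $\gamma$ \emph{inside} $\SING$ retracts onto $\gamma$. Since $\SING$ is a priori only a countably $(n{-}1)$-rectifiable set with no manifold structure, closeness to $\gamma$ gives no such retraction.

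The argument that actually works (and that the survey points to with the homotopy $G_O(x,s)=F(x,s\alpha_O(x))$) does not try to hit a distinguished curve. Instead one exploits that the map $y_s(t,x)$ is defined for \emph{all} $(t,x)$ in an ambient neighbourhood $W$, not only for $(t,x)\in\SING$: for regular points it follows the forward calibrated curve, and for every point it lands in $\SING$ once $s$ exceeds the (upper semicontinuous) cut time. Interpolating between the cut time $\tau$ and the exit time $\eta_O$ on a small ambient open set $O\ni(t_0,x_0)$ produces a continuous $\alpha_O$ with $\tau<\alpha_O<\eta_O$, so that $G_O(\cdot,1):O\to O\cap\SING$ is a homotopy inverse to the inclusion $O\cap\SING\hookrightarrow O$, with both homotopies staying in $O$ (and in $\SING$ when restricted to $\SING$). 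Now take $O$ to be an ambient ball, hence contractible; then $O\cap\SING$ is homotopy equivalent to a contractible space, and the concatenated homotopy $r\circ h_t\circ\iota$ followed by $G_O|_{\SING\cap O}$ contracts $O\cap\SING$ to a point \emph{inside itself}. This is what replaces your unjustified retraction onto $\gamma$.
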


To formulate the global homotopy equivalence result, we need extend the notion of Aubry set of $u$ as follows: let $u:(0,T)\times M\to\R$, with $T\in(0,+\infty]$, be a viscosity solution of the evolutionary Hamilton-Jacobi equation \eqref{eq:HJ_e_noncompact}. The Aubry set $\mathcal{I}_T(u)$ of $u$ is the set of points $(t,x)\in (0,T)\times M$ for which we can find a curve $\gamma:[0,T)\to M$, with $\gamma(t)=x$ and
\begin{align*}
	u(b,\gamma(b))-u(a,\gamma(a))=\int^b_aL(\gamma(s),\dot{\gamma}(s))\ ds,
\end{align*}
for every $a<b\in[0,T)$.

\begin{Pro}[\cite{Cannarsa_Cheng_Fathi2019}]
Let $H:T^*M\to\R$ be a Tonelli Hamiltonian. Assume that the uniformly continuous function $u:[0,T)\times M\to\R$, with $T\in(0,+\infty]$, is a viscosity solution  of the evolutionary Hamilton-Jacobi equation \eqref{eq:HJ_e_noncompact}. Then the inclusion 
$$\mbox{\rm Sing}_T\,(u)=\SING\cap\big((0,T)\times M\big)\subset\big((0,T)\times M\big)\setminus\mathcal{I}_T(u)$$ 
is a homotopy equivalence.	
\end{Pro}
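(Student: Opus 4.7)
My plan is to construct a strong deformation retraction of $\big((0,T)\times M\big)\setminus\mathcal{I}_T(u)$ onto $\mbox{\rm Sing}_T\,(u)$ by flowing each smooth point forward in time along the unique calibrated characteristic issuing from it, and freezing the orbit as soon as it enters the singular set. This would be the evolutionary, non-compact analogue of the construction behind Proposition~\ref{pro:homotopy}.

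First I would note that at any $(t_0,x_0)\notin\mbox{\rm Sing}_T\,(u)$ the function $u$ is differentiable, so the evolutionary analogue of Proposition~\ref{reachable_grad_and_backward} yields a unique backward calibrated curve $\gamma_{(t_0,x_0)}$ arriving at $(t_0,x_0)$ with momentum $D_xu(t_0,x_0)$ at time $t_0$. I would then extend $\gamma_{(t_0,x_0)}$ forward maximally as a $u$-calibrated curve and set
\begin{align*}
\tau(t_0,x_0)=\sup\big\{s\in[0,T-t_0):\gamma_{(t_0,x_0)}\text{ is $u$-calibrated on }[t_0,t_0+s]\big\},
\end{align*}
with the convention $\tau=0$ on $\mbox{\rm Sing}_T\,(u)$. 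The key dichotomy to establish is: if $(t_0,x_0)\notin\mathcal{I}_T(u)$, then either $\tau(t_0,x_0)=T-t_0$ (in which case the full forward orbit is calibrated, contradicting the exclusion of the Aubry set) or $\tau(t_0,x_0)<T-t_0$ and the endpoint $(t_0+\tau(t_0,x_0),\gamma_{(t_0,x_0)}(t_0+\tau(t_0,x_0)))$ lies in $\mbox{\rm Sing}_T\,(u)$, because if this endpoint were smooth the uniqueness of the forward extremal at a differentiability point would extend the calibration further, contradicting maximality of $\tau$.

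Next I would define the homotopy
\begin{align*}
F\big((t_0,x_0),s\big)=\big(t_0+s\tau(t_0,x_0),\,\gamma_{(t_0,x_0)}(t_0+s\tau(t_0,x_0))\big),\qquad s\in[0,1],
\end{align*}
on $\big((0,T)\times M\big)\setminus\mathcal{I}_T(u)$. One checks directly that $F(\cdot,0)=\mathrm{id}$, that $F(\cdot,s)$ fixes $\mbox{\rm Sing}_T\,(u)$ pointwise, and that $F(\cdot,1)$ takes values in $\mbox{\rm Sing}_T\,(u)$; together with continuity these three properties realize the inclusion as a strong deformation retract, and hence as a homotopy equivalence.

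The hard part will be the continuity of $F$. Joint continuity of the calibrated flow in the initial datum should follow from upper semicontinuity of $(t,x)\rightrightarrows D^+u(t,x)$ together with the classical stability of Tonelli minimizers under uniform convergence of endpoints on equi-bounded time intervals; continuity of the cut-time $\tau$ at a smooth point with $\tau(t_0,x_0)<T-t_0$ should reduce to the statement that nearby smooth orbits also strike $\mbox{\rm Sing}_T\,(u)$ at nearby times, a consequence of the closedness of the singular set in $(0,T)\times M$ and the uniqueness of the forward extremal at the limiting cut point. The most subtle point will be near the boundary of $\mathcal{I}_T(u)$, where $\tau$ can blow up along approaching sequences; non-compactness of $M$ will have to be controlled through the uniform continuity of $u$ and the superlinearity of $L$, which together confine calibrated curves to compact subsets on any bounded time interval and so rule out escape-to-infinity pathologies.
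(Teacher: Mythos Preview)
Your plan diverges from the route taken in \cite{Cannarsa_Cheng_Fathi2019} (which this survey summarizes in Section~5.3--5.4), and the divergence matters because your key step fails at conjugate-type cut points.

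\medskip
\textbf{The gap.} You assert that if the endpoint $(t_0+\tau,\gamma(t_0+\tau))$ is a differentiability point of $u$, then ``uniqueness of the forward extremal at a differentiability point would extend the calibration further''. This is not true. Differentiability of $u$ at a point guarantees a unique \emph{backward} calibrated curve ending there (Proposition~\ref{reachable_grad_and_backward}), but it does \emph{not} imply that the forward Hamiltonian trajectory with momentum $D_xu$ is $u$-calibrated for any positive time. The obstruction is precisely the conjugate phenomenon: the cut point along $\gamma$ is either a point with two distinct minimizers (hence in $\mbox{\rm Sing}_T\,(u)$) or a conjugate point where $u$ is differentiable yet no forward calibration exists. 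In the second case your map $F(\cdot,1)$ lands in $\CUT\setminus\SING$, so you have produced at best a retraction onto $\CUT$, not onto $\mbox{\rm Sing}_T\,(u)$. The inclusion $\SING\subset\CUT$ is itself one of the homotopy equivalences in Proposition~\ref{pro:homotopy}, and it is established there by a separate argument---the intrinsic characteristic---not by anything in your outline.

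A second, related difficulty is the continuity of $\tau$. Your argument for lower semicontinuity at a smooth point amounts to ``nearby cut points converge to a cut point'', i.e.\ closedness of $\CUT$; but the paper only records $\SING\subset\CUT\subset\overline{\SING}$, and $\CUT$ need not be closed. In \cite{Cannarsa_Cheng_Fathi2017,Cannarsa_Cheng_Fathi2019} this is circumvented by noting that $\tau$ is merely upper semicontinuous and then inserting a \emph{continuous} function between $\tau$ and a lower semicontinuous exit-time function, rather than using $\tau$ itself.

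\medskip
\textbf{What the paper does instead.} The construction behind Proposition~\ref{pro:homotopy} and its evolutionary analogue is the intrinsic flow $F(x,t)=y_{t,x}$ of Section~\ref{sec:intrinsic}, where $y_{t,x}$ is the unique maximizer of $u(\cdot)-A_t(x,\cdot)$. Continuity is automatic from strict concavity (no appeal to continuity of $\tau$), and the crucial property~(b)---if $F(x,s)\notin\SING$ then $\sigma\mapsto F(x,\sigma)$ is calibrated on $[0,s]$---forces $F(x,s)\in\SING$ for every $s$ beyond the cut time, including at conjugate-type cut points. This is exactly what pushes $\CUT$ into $\SING$ and is the ingredient your argument is missing.
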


Notice that we just assume the solution $u$ of \eqref{eq:HJ_e_noncompact} to be uniformly continuous without any extra conditions on the initial data. So, there are a lot of  technical points one needs to clear in order to deal with arbitrary initial conditions (see \cite{Fathi2020}). 

\subsubsection{Applications to Riemmanian geometry}

Now, suppose $(M,g)$ is a complete Riemannian manifold, and $d_C$ is the distance function to a closed subset $C\subset M$. We denote by $\text{Sing}^*\,(d_C)$ the set of points in $M\setminus C$ where $d_C$ is not differentiable. 

\begin{Pro}[\cite{Cannarsa_Cheng_Fathi2019}]\label{pro:dist_sing}
If $C$ is a closed subset of a complete Riemannian manifold $(M,g)$, then $\text{Sing}^*\,(d_C)$ is locally contractible.	
\end{Pro}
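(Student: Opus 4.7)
The strategy is to realize $d_C$ as encoding a continuous viscosity solution $v$ of an evolutionary Hamilton-Jacobi equation on $(0,\infty)\times M$ whose singular set is a product $(0,\infty)\times\mbox{\rm Sing}^*(d_C)$, and then invoke the preceding Proposition on local contractibility of $\mbox{\rm Sing}\,(u)$ for continuous viscosity solutions of \eqref{eq:HJ_e_noncompact}. Fix the Tonelli Hamiltonian $H(x,p)=\frac12|p|_g^2$ on $T^*M$ induced by the Riemannian metric; its fundamental solution is $A_t(y,x)=d(x,y)^2/(2t)$. Define
\[
v(t,x):=\inf_{y\in C}A_t(y,x)=\frac{d_C(x)^2}{2t}
\qquad\text{on }(0,\infty)\times M.
\]
This is the Lax--Oleinik evolution of the initial datum equal to $0$ on $C$ and $+\infty$ off $C$; in particular $v$ is continuous and nonnegative.

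The first step is to check that $v$ is a viscosity solution of $v_t+\frac12|D_xv|_g^2=0$ on $(0,\infty)\times M$. On the open set $(0,\infty)\times(M\setminus C)$, this follows from the eikonal identity $|Dd_C|_g=1$ a.e.\ and the explicit formulas $v_t=-d_C^2/(2t^2)$, $|D_xv|_g^2=d_C^2/t^2$ at points of differentiability, together with a standard stability argument. At a point $(t_0,x_0)$ with $x_0\in C$, the pointwise bound $0\le v(t,x)\le d(x,x_0)^2/(2t)$ forces any admissible smooth test function $\varphi$ for the sub- or superdifferential of $v$ at $(t_0,x_0)$ to satisfy $\varphi_t(t_0,x_0)=0$ and $D_x\varphi(t_0,x_0)=0$, so the equation holds trivially at such points. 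The same bound also shows that $v$ is itself differentiable with vanishing gradient at every $(t_0,x_0)$ with $x_0\in C$.

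Next I would identify the singular sets. For $x_0\in M\setminus C$ one has $d_C(x_0)>0$, hence $v(t,x)=d_C(x)^2/(2t)$ is differentiable at $(t_0,x_0)$ if and only if $d_C^2$ is differentiable at $x_0$, if and only if $d_C$ itself is differentiable at $x_0$; this together with the previous paragraph yields
\[
\mbox{\rm Sing}\,(v)=(0,\infty)\times\mbox{\rm Sing}^*(d_C).
\]
Applying the preceding Proposition to $v$ gives local contractibility of the right-hand side in $(0,\infty)\times M$; freezing a time coordinate then transfers local contractibility to $\mbox{\rm Sing}^*(d_C)$ via a standard product-slice argument. The main obstacle I anticipate is the singular-set identification, specifically the slightly subtle behaviour of $v$ at points of $C$: while $d_C$ may have arbitrarily wild singularities just outside $C$, the multiplicative factor $d_C$ appearing in $D_xv$ is small enough near $C$ to restore differentiability of $v$ on $C$, preventing spurious singularities from contaminating the product formula above.
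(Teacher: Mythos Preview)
The paper does not supply its own proof of this proposition; it is stated with a citation to \cite{Cannarsa_Cheng_Fathi2019}. Your approach is correct and is exactly the reduction the surrounding text suggests: the Riemannian statement is placed immediately after the local-contractibility result for evolutionary solutions as an application, and passing from $d_C$ to $v(t,x)=d_C(x)^2/(2t)$ for the geodesic Hamiltonian $H(x,p)=\frac12|p|_g^2$, identifying $\mbox{\rm Sing}\,(v)=(0,\infty)\times\mbox{\rm Sing}^*(d_C)$, and then slicing is the intended argument.

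Two minor remarks on the write-up. First, rather than the stability argument you allude to, the cleanest route to ``$v$ is a viscosity solution on $(0,\infty)\times M$'' is to note that $d_C^2$ is locally semiconcave on all of $M$ (including on $C$), hence $v$ is locally semiconcave in $(t,x)$ and satisfies the equation a.e.; then invoke the characterization recorded earlier in the paper that, for convex Hamiltonians, a semiconcave a.e.\ solution is a viscosity solution. This also absorbs the points of $C$ without a separate case analysis, so the ``obstacle'' you flag is not really one: the squaring is precisely what restores differentiability of $v$ along $C$. Second, for the product-slice step a one-line justification suffices: $(t,x)\mapsto(1,x)$ retracts $(0,\infty)\times\mbox{\rm Sing}^*(d_C)$ onto the slice $\{1\}\times\mbox{\rm Sing}^*(d_C)$, and a retract of a locally contractible space is locally contractible.
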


In classical Riemmanian geometry, for any $x\in M$ one denotes by $\text{Cut}_{(M,g)}\,(x)$ the cut locus with respect to $x$.  It is well-known that, when $M $is compact, such a  cut locus $\text{Cut}_{(M,g)}\,(x)$ is a deformation retract of $M\setminus\{x\}$, therefore it is locally contractible. On the other hand, very little was known up to now about the set
\begin{align*}
	\mathcal{U}(M,g)=\{(x,y)\in M\times M: \ \text{there exists a unique minimal $g$-geodesic between $x$ and $y$}\}.
\end{align*}
As Marcel Berger wrote in \cite[Page 284]{Berger_book2003}
\medskip
\begin{center}
\begin{minipage}[c]{0.9\linewidth}
	\textit{The difficulty for all these studies is an unavoidable dichotomy for cut points: the mixture of points with two different segments and conjugate points}.
\end{minipage}
\end{center}
\medskip
We now proceed to explain how to distinguish the study of these two sets by using the above methods.
We will begin with another consequence of Proposition \ref{pro:dist_sing}, for which we need the following definition: for a complete Riemannian manifold $(M,g)$, we define
\begin{align*}
	\mathcal{NU}(M,g)=(M\times M)\setminus\mathcal{U}(M,g).
\end{align*}
The set $\mathcal{U}(M,g)$ contains a neighborhood of the diagonal $\Delta_M\subset M\times M$. In fact, we have $\mathcal{NU}(M,g)=\text{Sing}^*(d_{\Delta_M})$, the set of singularities in $(M\times M)\setminus\Delta_{M}$ of the distance function of points in $M\times M$ to the closed subset $\Delta_M$. Therefore, Proposition \ref{pro:dist_sing} implies:

\begin{Pro}[\cite{Cannarsa_Cheng_Fathi2019}]
For every complete Riemannian manifold $(M,g)$, the set 
$$\mathcal{NU}(M,g)\subset\big(M\times M\big)\setminus\Delta_{M}$$ 
is locally contractible. In particular, the set $\mathcal{NU}(M,g)$ is locally path connected.	
\end{Pro}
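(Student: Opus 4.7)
The plan is to deduce the result directly from Proposition \ref{pro:dist_sing} applied to the product Riemannian manifold $(M\times M, g\oplus g)$ with the closed subset $\Delta_M\subset M\times M$. The excerpt already asserts the identification $\mathcal{NU}(M,g)=\mathrm{Sing}^*(d_{\Delta_M})$, so the proof is essentially a verification of this identification followed by invocation of the earlier proposition.

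First I would note that if $(M,g)$ is complete then so is the product $(M\times M, g\oplus g)$, and the diagonal $\Delta_M$ is a closed subset. Next I would establish the identification $\mathcal{NU}(M,g)=\mathrm{Sing}^*(d_{\Delta_M})$ by the following reasoning: for any $(x,y)\in M\times M$, the distance to the diagonal is
\begin{equation*}
d_{\Delta_M}(x,y)=\inf_{z\in M}\sqrt{d_g(x,z)^2+d_g(y,z)^2}.
\end{equation*}
A minimizer $z$ of the right-hand side is precisely a midpoint of some minimizing $g$-geodesic joining $x$ and $y$. Hence, for $(x,y)\notin\Delta_M$, the set of closest points of $\Delta_M$ to $(x,y)$ is in bijective correspondence with the set of minimizing $g$-geodesics joining $x$ and $y$. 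Since a Lipschitz distance function is differentiable at a point (off the set) if and only if the closest-point projection is a singleton there, we conclude that $d_{\Delta_M}$ fails to be differentiable at $(x,y)$ exactly when $(x,y)\in\mathcal{NU}(M,g)$. This gives the equality $\mathcal{NU}(M,g)=\mathrm{Sing}^*(d_{\Delta_M})$.

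With the identification in hand, Proposition \ref{pro:dist_sing} applied to $C=\Delta_M$ in the complete Riemannian manifold $M\times M$ immediately yields that $\mathcal{NU}(M,g)=\mathrm{Sing}^*(d_{\Delta_M})$ is locally contractible in $(M\times M)\setminus\Delta_M$. Local path connectedness is a formal consequence of local contractibility, giving the second assertion.

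The main obstacle is the justification of the equivalence ``differentiability of $d_{\Delta_M}$ at $(x,y)$ $\iff$ uniqueness of the minimizing geodesic from $x$ to $y$''. The nontrivial direction uses the fact that, in the Riemannian setting, the square of the distance function to a closed set is semiconcave and its superdifferential at any point is the convex hull of the vectors pointing towards the closest points; thus differentiability forces a single closest midpoint, hence a single minimizing geodesic. Once this link between closest midpoints and minimizing geodesics is cleanly articulated, the rest of the argument is a direct application of the previously established machinery.
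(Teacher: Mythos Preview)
Your approach is exactly the paper's: the paragraph immediately preceding the proposition asserts $\mathcal{NU}(M,g)=\mathrm{Sing}^*(d_{\Delta_M})$ and then says ``Therefore, Proposition~\ref{pro:dist_sing} implies'' the result---precisely your argument.

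One small correction to your justification of the identification. The general principle you invoke, that a Riemannian distance function $d_C$ is differentiable at $x\notin C$ if and only if the closest-point projection is a singleton, is false: take $C=\{p\}\subset S^2$ and $x=-p$; the projection is the single point $p$, yet $d_{\{p\}}$ is not differentiable at $-p$. The correct criterion is uniqueness of the minimizing geodesic in the ambient manifold from $x$ to $C$. In the present situation this causes no harm, because a unique midpoint $m$ also forces a unique minimizing geodesic in $M\times M$ from $(x,y)$ to $(m,m)$, by the same broken-geodesic smoothness argument that gives the bijection between midpoints and minimizing $g$-geodesics. A cleaner route is simply to observe from your inequality that $d_{\Delta_M}(x,y)=d_g(x,y)/\sqrt{2}$ for all $(x,y)$, and then quote the standard fact that $d_g$ is differentiable at $(x,y)\notin\Delta_M$ precisely when the minimizing $g$-geodesic from $x$ to $y$ is unique.
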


For a closed subset $C\subset M$, we define its Aubry set $\mathcal{A}^*(C)$ as the set of points $x\in M\setminus C$ such that there exists a curve $\gamma:[0,+\infty)\to M$ parameterized by arc-length such that $d_C(\gamma(t))=t$ and $x=\gamma(t_0)$ for some $t_0>0$.

\begin{Pro}[\cite{Cannarsa_Cheng_Fathi2019}]
If C is a closed subset of the complete Riemannian manifold $(M,g)$, then the inclusion 
$$\text{Sing}^*(C)\subset M\setminus\big(C\cup\mathcal{A}^*(C)\big)$$ 
is a homotopy equivalence.	
\end{Pro}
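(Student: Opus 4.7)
The plan is to transport the homotopy argument of Proposition \ref{pro:homotopy}---which established the analogous equivalence $\mbox{\rm Sing}\,(u)\subset M\setminus\mathcal{I}(u)$ for weak KAM solutions---to the distance-function setting. Under the natural dictionary, $d_C$ plays the role of the weak KAM solution on $M\setminus C$; unit-speed minimizing geodesics emanating from $C$ play the role of backward calibrated curves; $\text{Sing}^\ast(C)$ is the corresponding singular set; and $\mathcal{A}^\ast(C)$ coincides with the projected Aubry set of $d_C$, since $x\in\mathcal{A}^\ast(C)$ means precisely that some unit-speed minimizer to $C$ through $x$ extends to an infinite half-ray, that is, a $d_C$-calibrated curve passes through $x$ on all of $[0,+\infty)$. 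With this translation the target inclusion becomes a direct analogue of the compact weak KAM case.

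I would then run the intrinsic singular characteristic construction of Section \ref{sec:intrinsic} with $d_C$ in place of $u$: for a locally uniform $t_0>0$ and each $x\in M\setminus C$, set
\[
F(x,s)=\arg\max_{y\in M}\bigl\{d_C(y)-A_s(x,y)\bigr\},\qquad s\in[0,t_0],
\]
where $A_s(x,y)=\tfrac{1}{2s}\,d_{(M,g)}(x,y)^2$ is the Riemannian action associated with the quadratic Lagrangian $L(v)=\tfrac12 g_x(v,v)$. By the analogue of Proposition \ref{pro:y}, $F$ is continuous, $F(x,0)=x$, and whenever $F(x,s)\notin\text{Sing}^\ast(C)$ the curve $\sigma\mapsto F(x,\sigma)$ is a $d_C$-calibrating geodesic on $[0,s]$; conversely, any backward $d_C$-calibrating curve starting at $x$ coincides with $F(x,\cdot)$ on some short interval. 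In particular $F$ preserves $\text{Sing}^\ast(C)$ and propagates singularities along genuine singular arcs.

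I would next introduce the cut-time function $\tau:M\setminus C\to[0,+\infty]$ defined by
\[
\tau(x)=\sup\bigl\{t\geqslant 0: F(x,s)\notin\text{Sing}^\ast(C)\ \text{for all}\ s\in[0,t)\bigr\},
\]
and, for the open set $O=M\setminus(C\cup\mathcal{A}^\ast(C))$, the exit-time $\eta_O(x)=\sup\{t\geqslant 0: F(x,s)\in O\ \text{for all}\ s\in[0,t)\}$. The key structural equivalence is $\tau(x)=+\infty\Longleftrightarrow x\in\mathcal{A}^\ast(C)$: if $F(x,\cdot)$ never meets $\text{Sing}^\ast(C)$ then it is an arc-length ray realizing $d_C$ at every time, so $x\in\mathcal{A}^\ast(C)$; conversely any such ray through $x$ coincides with $F(x,\cdot)$ by uniqueness of calibrated curves at regular points. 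Upper semicontinuity of $\tau$ and lower semicontinuity of $\eta_O$ on $O$ then permit a continuous interpolation $\alpha_O:O\to(0,+\infty)$ with $\tau\leqslant\alpha_O<\eta_O$, and the homotopy $H(x,s)=F(x,s\,\alpha_O(x))$ yields a strong deformation retraction of $O$ onto $\text{Sing}^\ast(C)$, proving the homotopy equivalence.

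The hard part will be controlling $F$ in the non-compact Riemannian setting: one needs a locally uniform lower bound for the existence time $t_0$ and for the negative-definiteness modulus of $y\mapsto d_C(y)-A_s(x,y)$ near $y=F(x,s)$, which requires completeness of $(M,g)$ together with local boundedness of the semiconcavity constant of $d_C$ on compact subsets of $M\setminus C$; points approaching $C$, where $d_C$ degenerates, also demand extra care. Once these analytic bounds are secured, the dichotomy $\tau(x)<+\infty$ versus $x\in\mathcal{A}^\ast(C)$, together with the invariance $F(\text{Sing}^\ast(C))\subset\text{Sing}^\ast(C)$, follow as in the weak KAM argument of Proposition \ref{pro:homotopy}.
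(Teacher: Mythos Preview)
The paper is a survey and does not give a self-contained proof of this proposition---it is quoted from \cite{Cannarsa_Cheng_Fathi2019}---but the scheme it sketches for the companion weak KAM statement (Proposition~\ref{pro:homotopy} and the paragraph following it) is exactly the one you propose: build the intrinsic homotopy $F$ via the positive Lax--Oleinik maximizer, use the upper-semicontinuous cut time $\tau$ and the lower-semicontinuous exit time $\eta_O$, interpolate by a continuous $\alpha_O$, and retract through $G_O(x,s)=F(x,s\,\alpha_O(x))$. Your dictionary ($d_C$ as weak KAM solution, $A_s(x,y)=\tfrac{1}{2s}d(x,y)^2$, $\mathcal{A}^*(C)$ as the locus where $\tau=+\infty$) and your list of technical hurdles (local uniformity of $t_0$ on a noncompact manifold, degeneration near $C$) match what the cited paper has to handle, so your proposal is aligned with the intended argument.
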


We remark that if $U$ is a bounded connected component of $M\setminus C$, then $U\cap\mathcal{A}^*(C)=\varnothing$, and
$$\text{Sing}^*(C)\cap U\subset U$$ 
is a homotopy equivalence (see also \cite{Lieutier2004} and Section \ref{sec:gg}). As for  unbounded components, see also \cite{Cannarsa_Peirone2001} for the Euclidean case.

\begin{Pro}[\cite{Cannarsa_Cheng_Fathi2019}]
For every compact connected Riemannian manifold $(M,g)$, the inclusion 
$$\mathcal{NU}(M,g)\subset\big(M\times M\big)\setminus\Delta_M$$ 
is a homotopy equivalence. Therefore the set $\mathcal{NU}(M,g)$ is path connected and even locally contractible.
\end{Pro}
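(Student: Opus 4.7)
The plan is to reduce the claim to the preceding proposition about $\text{Sing}^*(C) \subset M \setminus (C \cup \mathcal{A}^*(C))$ being a homotopy equivalence, applied in the product Riemannian manifold $(M \times M, g \oplus g)$ with the closed subset $C = \Delta_M$. First, I would invoke the identification already stated in the excerpt, namely
\[
\mathcal{NU}(M,g) \;=\; \text{Sing}^*(d_{\Delta_M}) \subset (M\times M)\setminus \Delta_M,
\]
so that the previous proposition immediately gives that the inclusion
\[
\mathcal{NU}(M,g) \;\subset\; (M\times M)\setminus\bigl(\Delta_M \cup \mathcal{A}^*(\Delta_M)\bigr)
\]
is a homotopy equivalence. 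What remains, therefore, is to establish that under the hypothesis that $M$ is compact one has $\mathcal{A}^*(\Delta_M) = \varnothing$, so that the target of the inclusion coincides with the whole of $(M\times M)\setminus \Delta_M$.

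The vanishing of $\mathcal{A}^*(\Delta_M)$ is the core observation and it is essentially a compactness argument. By definition, a point of $\mathcal{A}^*(\Delta_M)$ would lie on an arc-length parameterized curve $\gamma:[0,+\infty)\to M\times M$ satisfying $d_{\Delta_M}(\gamma(t)) = t$ for all $t\geqslant 0$. Since $M$ is compact, so is $M\times M$ (for the product metric), and consequently the distance function $d_{\Delta_M}$ is bounded above by the finite diameter of $M\times M$. The identity $d_{\Delta_M}(\gamma(t))=t$ therefore cannot hold for arbitrarily large $t$, which is a contradiction. Hence $\mathcal{A}^*(\Delta_M)=\varnothing$, and the homotopy equivalence claimed in the proposition follows.

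For the final two assertions: local contractibility of $\mathcal{NU}(M,g)$ is already guaranteed by the earlier Proposition on the local contractibility of $\text{Sing}^*(d_C)$ for an arbitrary closed set $C$ in a complete Riemannian manifold, applied again with $C=\Delta_M$. Path-connectedness is then a consequence of the homotopy equivalence, provided $(M\times M)\setminus\Delta_M$ is itself path-connected. When $n=\dim M \geqslant 2$, the set $\Delta_M$ has codimension $n\geqslant 2$ in the connected manifold $M\times M$, so its complement remains path-connected by a standard transversality/general-position argument. The only remaining case is $n=1$, where $M=S^1$ and $(S^1\times S^1)\setminus \Delta_{S^1}$ is an open cylinder, hence path-connected.

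The argument is short, so there is no serious obstacle; the one point to be a little careful about is verifying that the earlier proposition, stated for distance functions to closed sets in an arbitrary complete Riemannian manifold, applies verbatim to $C=\Delta_M$ inside the product manifold $M\times M$ equipped with the product metric, and that the singularity set $\text{Sing}^*(d_{\Delta_M})$ is genuinely the same as $\mathcal{NU}(M,g)$ — i.e.\ that non-differentiability of $d_{\Delta_M}$ at $(x,y)\notin\Delta_M$ is equivalent to the existence of at least two distinct minimizing $g$-geodesics between $x$ and $y$. This equivalence rests on Proposition~\ref{reachable_grad_and_backward}-type identifications between reachable gradients and minimizing geodesics, which is already part of the toolkit developed in the preceding sections.
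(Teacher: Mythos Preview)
Your proposal is correct and follows exactly the line of argument the paper itself indicates: the remark immediately preceding the proposition already records that for a bounded connected component $U$ of $M\setminus C$ one has $U\cap\mathcal{A}^*(C)=\varnothing$, so that $\text{Sing}^*(C)\cap U\subset U$ is a homotopy equivalence; applying this with $C=\Delta_M$ inside the compact product manifold $M\times M$ (together with the identification $\mathcal{NU}(M,g)=\text{Sing}^*(d_{\Delta_M})$ already stated in the text) is precisely your argument. Your added justification of path-connectedness of $(M\times M)\setminus\Delta_M$ via the codimension argument for $\dim M\geqslant 2$ and the explicit $S^1$ case is a clean way to complete the ``therefore'' clause, which the paper leaves implicit.
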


\section{Concluding remarks}

The study of singularities of solutions to HJ equation has made remarkable progress in the past decades. Many results that seemed impossible have been obtained, and connections with other domains have been established. Nevertheless, many interesting problems remain open. Some open problems were proposed in \cite{Cannarsa_Cheng2018}.

In \cite{Cannarsa_Cheng2020}, the uniqueness of strict singular characteristic on $M=\R^2$ is proved when the initial point is not a critical point. However, the uniqueness issue is still open for higher dimensional manifolds. 
Recalling some results in \cite{Cannarsa_Chen_Cheng2019}, assuming uniqueness for generalized characteristics, one can bridge the Aubry set (Mather set) and the invariant set of the associated semi-flow of generalized characteristics. Recently, relations between propagation of singularities and global dynamics of lower dimensional Hamiltonian systems have  also been pointed out in \cite{Zhang2020}. More concrete applications to problems from Hamiltonian dynamical systems in the scheme of Mather theory and weak KAM theory are expected, including applications to the study of Burgers turbulence as noted in \cite{Khanin_Sobolevski2016}.

\bibliographystyle{alpha}
\bibliography{mybib}

\end{document}